\documentclass[11pt]{article}
\usepackage{fullpage}
\usepackage{algorithmic}
\usepackage{algorithm}
\usepackage{epsfig}
\usepackage{graphicx}
\usepackage{latexsym}
\usepackage{amsmath}
\usepackage{amsfonts}
\usepackage{multirow}
\usepackage{amssymb}
\usepackage[sort,numbers]{natbib}
 \usepackage{tikz}

\usetikzlibrary{shapes,arrows,shadows}
\usepackage{amsmath,bm,times}

\usepackage{mathrsfs}
\usepackage{pifont}
\usepackage{yhmath}
\usepackage{bbm}
\usepackage{amsthm}
\usepackage{booktabs}
\usepackage{stmaryrd}
\usepackage{wasysym}
\usepackage{bbm}
\usepackage{array}
\usepackage{enumerate}
\usepackage{dsfont}

\numberwithin{equation}{section}
\numberwithin{figure}{section}

\newtheorem{theorem}{Theorem}[section]
\newtheorem{lemma}[theorem]{Lemma}

\newtheorem{corollary}[theorem]{Corollary}
\newtheorem{proposition}[theorem]{Proposition}
\newtheorem{definition}[theorem]{Definition}
\newtheorem{remark}[theorem]{Remark}

\newtheorem{observation}[theorem]{Observation}

\theoremstyle{remark}

\makeatletter
\newcommand\figcaption{\def\@captype{figure}\caption}
\newcommand\tabcaption{\def\@captype{table}\caption}
\makeatother

\DeclareMathAlphabet{\mathpzc}{OT1}{pzc}{m}{it}

\begin{document}
\newcounter{my}
\newenvironment{mylabel}
{
\begin{list}{(\roman{my})}{
\setlength{\parsep}{-1mm}
\setlength{\labelwidth}{8mm}
\usecounter{my}}
}{\end{list}}

\newcounter{my2}
\newenvironment{mylabel2}
{
\begin{list}{(\alph{my2})}{
\setlength{\parsep}{-0mm} \setlength{\labelwidth}{8mm}
\setlength{\leftmargin}{3mm}
\usecounter{my2}}
}{\end{list}}

\newcounter{my3}
\newenvironment{mylabel3}
{
\begin{list}{(\alph{my3})}{
\setlength{\parsep}{-1mm}
\setlength{\labelwidth}{8mm}
\setlength{\leftmargin}{10mm}
\usecounter{my3}}
}{\end{list}}

\title{\bf The local Poincar\'e inequality of stochastic dynamic and application to the Ising model \thanks{This work is supported by the National Natural Science Foundation of China (No. 12288201).}}
\author{Cui Kaiyuan\thanks{Institute of Applied Mathematics, Academy of Mathematics and Systems Science,
		Chinese Academy of Sciences, Beijing 100190, China,({ cuiky@amss.ac.cn})}\and Gong Fuzhou \thanks{Institute of Applied Mathematics, Academy of Mathematics and Systems Science,
		Chinese Academy of Sciences, Beijing 100190, China,({ fzgong@amt.ac.cn})}}
	\date{}
\maketitle

\vspace{-5em}

\begin{center}\large

\end{center}


\begin{abstract}
Inspired by the idea of stochastic quantization proposed by Parisi and Wu, we construct the transition probability matrix which plays a central role in the renormalization group through a stochastic differential equation. By establishing the discrete time stochastic dynamics, the renormalization procedure can be characterized from the perspective of probability. Hence, we will focus on the  investigation of the infinite dimensional stochastic dynamic.
From the stochastic point of view, the discrete time stochastic dynamic can induce a  Markov chain. Via calculating the square field operator and the Bakry-\'Emery curvature for a class of two-points functions, the local Poincar\'e inequality is established, from which the estimate of correlation functions can also be obtained. Finally, under the condition of ergodicity, by choosing the couple relationship between the system parameter $K$ and the system time $T$ properly when $T\rightarrow +\infty$, the two-points correlation functions for limit system are also estimated. 	\\

\noindent{\bf Keywords:}~Ising model; stochastic quantization; local Poincar\'e inequality; Bakry-\'Emery curvature; renormalization
\end{abstract}
\section{Introduction}\label{sec:1}
\subsection{Motivation}\label{sec:1.1}
The subject of critical phenomena and phase transitions has fascinated mathematicians for over half a century; see
\cite{Roland2019book}. In the physics literatures, critical phenomena are understood via the renormalisation group(RG) method. After the modern reconstruction of the renormalisation group procedure by Wilson in 1971, the renormalization group method  revolutionized the mathematical sciences and has appeared in thousands papers devoted to the development of the understanding of physical, social, biological and financial systems. As Kadanoff put it,  Wilson has helped add our profession a new style of work and of thinking; see \cite{Kadanoff2013document}. In view of  these wonderful accomplishments, Wilson received the 1982 Nobel Prize within physics. Nowadays, the renormalization has been not only a powerful technical tool  but also a primary insight in physics which provides  connections between the behavior at one scale and the phenomena at very different scale. 

Since then, investigators have made many efforts to elucidate the nature and universal properties of the  renormalization group flows between them; see \cite{Cardy1996book}. The pioneering work, called the $c$ -theorem, claims that there exists a microscopic irreversible process along the renormalization group flow, which was owing to Zamolodchikov; see \cite{Kim2017document}. Ever since this observation, there are a series of papers published in the contex of renormalization. For example, the renormalization group has interesting features as a system of autonomous ordinary differential equations in coupling parameters space. However, this is a infinite-dimensional equation which should be truncated. Based on the assumption that the infinite number of
neglected irrelevant couplings produce some uncertainty in the values of the preserved
couplings,  Gaite advanced a stochastic formulation of the renormalization group in \cite{Gaite2004document} through adding a noise to the RG equations; see \cite{Gaite1996document,Casini2007document,Apenko2012document,Gordon2021document} for more instances and different approaches in this direction.  

Specifically, the most known example in the field of \emph{Real Space Renormalization Group} is the classic one-dimensional Ising model with  periodic boundary condition, which can be solved exactly. The basic idea lying at the heart of \emph{Real Space Renormalization Group} is  the ``coarse-graining" procedure which was formulated by Kaddanoff in 1966. It states that, no matter how many times the blocking transformation is iterated, the dominant interactions will be short-ranged; see \cite{Cardy1996book}. More precisely, in concrete implementation, the blocking transformation maps the  original ``spins''  to a sequence of new ``block spins'' but leaves the structure of probability distributions of quantities which depend on spins invariant. In the Markov words, there is a probability transition function connects the original ``spins'' measure and new ``block spins'' measure, and the construction of this probability transition function plays a central role in the ``coarse-graining" procedure,  which will be discussed more carefully in the next section.

One of the most routinely important features in physical literature is that the correctness of a new idea is best borne out by testing in exactly solved models. Inspired by the idea of stochastic quantization, which is proposed by Giorgio Parisi, who is  the Nobel Prize winner in physics  at 2021,  and Wu, we  propose a new perspective to reconstruct the renormalization procedure, from the stochastic dynamic point of view. In this formulation, a stochastic difference equation will be investigated through  the local Poincar\'e  inequality and two-points correlation functions.

Poincar\'e  inequality is a powerful tool in investigating statistical mechanics models. On the one hand, it has been recognized that the concentration of measure phenomenon has spread out to an impressively wide range of illustrations and applications, including  probability theory, statistical mechanics, random matrix theory, random graph models, stochastic dynamics, and so on.
Roughly speaking, the concentration inequality implies a sharp estimation of a random variable by bounding the deviation from it's expectation with an associated probability
; see \cite{Stephane2013book,Chung2006document} for details. 

As is well known, the Poincar\'e inequality can imply the exponential concentration in the scalar setting, which is proved by Gromov and Milman on Riemannian manifolds  \cite{Gromov1983document}. During the past decades, in the context of random matrix,  researchers take serious effort to establish 
the concentration theory for random matrix models; see \cite{Ahlswede2003document,Mackey2014document,Paulin2016document,Tropp2011document,Tropp2021document}. Since then, one natural idea is to develop matrix versions of techniques that connect the  concentration to functional inequalities, even though it will meet new difficulties because of noncommutativity. Very recently, from the perspective of functional inequalities, Aoun et.al firstly show that the Poincar\'e inequality in matrix setting can also lead to subexponential concentration of a random matrix under the $\ell^{2}$  form in paper \cite{Aoun2020document}. Ever since this pioneering paper of Aoun, Tropp and Huang show us another concise argument using the symmetrization  technique and matrix chain rule inequality. Simultaneously, they demonstrate that local Poincar\'e  inequalities, which are equivalent to the scalar Bakry-\'Emery criterion, lead to the optimal subgaussian concentration results; see \cite{Tropp2021document,Huang2021document}.

On the other hand,  the Poincar\'e inequality implies the two-points correlation functions estimation. And the two-points correlation functions play an important role in investigation of statistic physical models, especially in characterizing the properties near the critical points. From the perspective of probability, the information contained in the moments of Gibbs measure with light-tailed distribution,  to some extent, can determine the measure. Briefly speaking, under the circumstance of Gaussian distribution, according to Feynman graph calculus and Wick theorem, arbitrary finite-points correlation functions can be expressed as the polynomial of  the two-points correlation functions. Things will be complex for general measures with non-trivial potential terms, fortunately, it can also be estimated by the two-points correlation functions; see  \cite{Glimm1987book}. Hence, the two-points correlation functions play a crucial role in analyzing statistical mechanic systems,
for example, Cédric Bény and Tobias J. Osborne recently introduce a thermodynamic quantity which decreases along the renormalization flow  and can be expressed by two-points correlation functions to investigate the behavior of renormalization flow; see \cite{Beny2015document}.

Observing the insight relationship between the Poincar\'e inequality  and the properties near the critical points of statistical mechanic systems, mathematicians  attempt to develop techniques to analysis a class of spin systems, such as stochastic Ising model, random field Ising model. Initial efforts in this direction were due to Stroock, Zegarlinski, Yoshida, Yau H.T and so on; see \cite{ZEGARLISKI1992document,Lu1993document,Zegarlinski1996document,STROOCK1992document,STROOCK1992document1,STROOCK1992document2,YOSHIDA2001document}. Recently, under the condition that the operator norm of the interaction matrix is smaller than 1, Eldan et.al \cite{Eldan2022document} establish a Poincar\'e inequality for Ising model with general quadratic interactions and  the Sherrington-Kirkpatrick (SK) model in the fields of spin glass. Before this result, Roland Bauerschmidt and Thierry Bodineau has proved that the SK model satisfies a Log-Sobolev inequality, with another Dirichlet form, at sufficiently high temperature;
see \cite{Eldan2022document,Roland2019document}.

Indeed, it is a fascinating topic to  search for methods that can help us establish local  Poincar\'e inequality. Particularly, for discrete configuration space, the generator of 
a Markov process can be seen as a discrete  Laplacian on weighted graphs, which is analogous  to the continuous Laplace operator in  Riemannian geometry; see \cite{Chung1997book}. Even through the discrete  Laplacian is quite different from the continuous one in many aspects, there actually are some similar basic concepts and properties between the graphs and manifolds, such as Ricci curvature, Bakry-\'Emery notation, and so on. The first definition of Ricci curvature on graphs was introduced by Fan Chung and Yau in 1996 \cite{Chung1996document}. Since then, Chung, Lin, Yau and their coauthers published a series of articles in this direction. For example, they proved that the Ricci curvature for a local finite graph or Ricci flat graph can be bounded below, and for graphs with non-negative Ricci curvature, they established some functional inequalities such as Harnack inequalities, we refer to \cite{Chung2000document,Lin2010document,Lin2011document,Chung2014document} for details on these results.
\subsection{Our results}\label{sec:1.2}
For the setup of Markov chain defined on infinite graphs, even if the thermodynamic quantity $f$ we considered only involves finite spatial points, the 
essential computation of the generator and the carr\'e du champ operator $\Gamma$ with respect to  semi-group $\mathcal{P}_{t}$ during the process of deducing the local Poincar\'e inequality is enlarging at each tick of the time because the system is dynamically evolving, the similar problem has occured in \cite{Chung2006document}. Typically, in order to develop a suitable calculus on the carr\'e du champ operator, it is necessary to deal with expressions such as $\Gamma(F,F)$ and $\Gamma_{2}(F,F)$ for any $F$ in some algebra $\mathcal{A}_{0}$, where $\Gamma$ and $\Gamma_{2}$ are defined in Section \ref{sec:3.1}.
By this way, the key point of the calculation will finally be reduced to estimating the Fiedler value of one arbitrary dimensional square matrix, which is complex and intractable, and  the details will be put in Observation \ref{obs:1} in appendix. 

Hence, we can not use the $\Gamma$ calculus directly, owing to the fact that the model and the configuration space $\{+1,-1\}^{\mathbb{Z}}$ we considered does not have special properties, such as local finite, or Ricci flat. However, as we have talked above, the thermodynamic quantities who play a crucial role in physical systems are two-points correlation functions,
and the quantities we want to estimate are two-points functions $f$ rather than the  $F(t,\eta)=\mathcal{P}_{T-t}f(\eta)$. That is to say,
it is not an essential problem but only a technique difficulty. Owing to this observation, we rewrite the $\Gamma$ calculus in matrix form, and expressing the $\Gamma$ and $\Gamma_{2}$ as the quadratic form of $f$ itself, which naturally avoids this trouble even if leads to other difficulties on calculation. The plane of the paper is the following:

In Section 2 we formulate an alternative way to characterize a similar renormalization  procedure of one-dimensional Ising model from the perspective of stochastic quantization. Borrowing the idea from physicists, the  coarse-graining is described by a  stochastic dynamic defined on configuration space $E=\{+ 1,-1\}^{\mathbb{Z}}$.

In Section 3 we extend the classic Bakry-\'Emery criterion to the case of infinite discrete configuration space. For a class of important thermodynamic quantities, two-points correlation functions, the local Poincar\'e inequality at finite time has been established.

In Section 4, using the results we get in section \ref{sec:3}, the stochastic dynamic we formulate in section \ref{sec:2}, which emerges from the  real space renormalization procedure, has been investigated. More precisely, we build the local Poincar\'e inequality and the estimates of correlation functions with respect to two-points functions.

\section{Our model}\label{sec:2}
As we have mentioned in Section \ref{sec:1},  the key issue in establishing a renormalization  procedure is to constructing the transition probability function between the original ``spins'' and new ``block spins''. More concretely, for one-dimensional Ising model with the following reduced Hamiltonian $\mathcal{H}(\sigma)$ 
\begin{align}
	\mathcal{H}(\sigma)=K\sum_{i}\sigma_{i}\sigma_{i+1}+h\sum_{i}\sigma_{i}, 
\end{align}
where $K=\beta J$ is the coupling constant, $h=\beta H$ is external field, which absorb the factor of $\beta=1/ k_{B}T$, and $\sigma_{i}$ represents the spins of the system. Besides, Hamiltonian $\mathcal{H}(\sigma)$ satisfies $$\sum_{\{\sigma\}}\mathcal{H}(\sigma)=0.$$
The main idea of the ``coarse-graining" procedure is to transfering the original spins $\{\sigma\}$ into the new block spin variables $\{S_{I}\}$ but with the structure of Hamiltonian unchanged. That is to say, the new block spin Hamiltonian $\tilde{\mathcal{H}}(S)$ still keeps the form
\begin{align}
	\tilde{\mathcal{H}}(S)=\tilde{K}\sum_{I}S_{I}S_{I+1}+\tilde{h}\sum_{I}S_{I},
\end{align}
Nevertheless,the spatial scale of spin system will be different from the original one.
Expressing the statement above more explicitly in probability words, we need introduce a weight factor 
$P(\sigma;S)$ which satisfies
\begin{equation*}
	P(\sigma;S)\geq 0,\
	\sum_{\{S\}}P(\sigma;S)=1.
\end{equation*}
The new block spin Hamiltonian $\tilde{\mathcal{H}}(S)$ is defined by
\begin{align}\label{1.1}
	\sum_{\sigma}P(\sigma;S)e^{\mathcal{H}(\sigma)}=e^{G_{0}+\tilde{\mathcal{H}}(S)},
\end{align}
where $G_{0}$ is a constant determined by condition  $$\sum_{S}\tilde{\mathcal{H}}(S)=0.$$
Summing over new block spins $S$ to both sides of equality~\eqref{1.1}~, we know that
\begin{align*}
	\sum_{S}\sum_{\sigma}P(\sigma;S)e^{\mathcal{H}(\sigma)}=\sum_{\sigma}e^{\mathcal{H}(\sigma)}\sum_{S}P(\sigma;S)=\sum_{\sigma}e^{\mathcal{H}(\sigma)}=Z,
\end{align*}
which means the partition functions for the original system and the blocked system are the same.
For example, in Figure \ref{Fig:1},  the new block spins are determined by 
\begin{align}\label{1.4}
	S_{I}=sign\{\sigma^{I}_{1}+\sigma^{I}_{2}+\sigma^{I}_{3}\}.
\end{align}
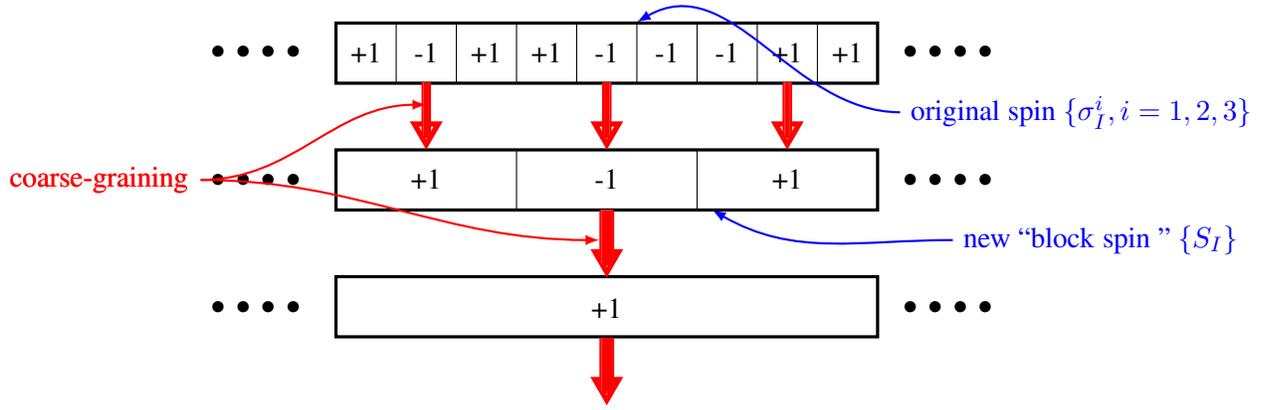
\begin{figure}
	\usetikzlibrary{arrows, decorations.markings}
	
	\tikzstyle{vecArrow} = [thick, decoration={markings,mark=at position
		1 with {\arrow[thick,innerred]{open triangle 60}}},
	double distance=4pt, shorten >= 5.5pt,
	preaction = {decorate},
	postaction = {draw,line width=4pt,red,shorten >= 4.5pt}]
	\tikzstyle{innerblue} = [thick, red,line width=4pt, shorten >= 4.5pt]
	
	\tikzstyle{VecArrow} = [thick, decoration={markings,mark=at position
		1 with {\arrow[thick,innerred]{open triangle 60}}},
	double distance=2pt, shorten >= 5.5pt,
	preaction = {decorate},
	postaction = {draw,line width=2pt,red,shorten >= 4.5pt}]
	\tikzstyle{innerred} = [thick, red,line width=2pt, shorten >= 4.5pt]
	
	\begin{tikzpicture}[set style={{help lines}+=[dashed]}]
		\def\V{-3}   
		\def\EG{3}    

		\def\DZCE{-3.5}  
		\def\LZCE{1}  
		\def\PN{5}   
		
		\pgfmathsetmacro\EC{\EG+\V};
		\pgfmathsetmacro\FZCE{\DZCE+\LZCE};
		
		\foreach \x in {1,2,...,4}
		\draw ({\FZCE+0.6+\x/3},{\EC+0.4}) node {$\bullet$}; 
		\foreach \x in {1,2,...,4}
		\draw ({\FZCE+0.6+\x/3},{\EC+2.1}) node {$\bullet$}; 
		\foreach \x in {1,2,...,4}
		\draw ({\FZCE+0.6+\x/3},{\EC-1.3}) node {$\bullet$}; 
		
		\foreach \x in {1,2,...,4}
		\draw ({\FZCE+9.8+\x/3},{\EC+0.4}) node {$\bullet$}; 
		\foreach \x in {1,2,...,4}
		\draw ({\FZCE+9.8+\x/3},{\EC+2.1}) node {$\bullet$}; 
		\foreach \x in {1,2,...,4}
		\draw ({\FZCE+9.8+\x/3},{\EC-1.3}) node {$\bullet$}; 
		
		\begin{scope}[
			yshift=48,every node/.append style={
				yslant=0,xslant=0},yslant=0,xslant=0
			]
			\fill[white,fill opacity=.9] (0,0) rectangle (7.2,0.8);
			\draw[black,very thick] (0,0) rectangle (7.2,0.8);
			\draw[step=8mm, black] (0,0) grid (7.2,0.8);
		\end{scope}  
		\node  at  (0.4,2.1) {+1};
		\node  at  (1.2,2.1) {-1};
		\node  at  (2,2.1) {+1};
		\node  at  (2.8,2.1) {+1};
		\node  at  (3.6,2.1) {-1};
		\node  at  (4.4,2.1) {-1};
		\node  at  (5.2,2.1) {-1};
		\node  at  (6,2.1) {+1};
		\node  at  (6.8,2.1) {+1};
		\begin{scope}[
			yshift=0,every node/.append style={
				yslant=0,xslant=0},yslant=0,xslant=0
			]
			\fill[white,fill opacity=.9] (0,0) rectangle (7.2,0.8);
			\draw[black,very thick] (0,0) rectangle (7.2,0.8);
			\draw[step=24mm, black] (0,0) grid (7.2,0.8);
		\end{scope}  
		\node  at  (1.2,0.42) {+1};
		\node  at  (3.6,0.42) {-1};
		\node  at  (6,0.42) {+1};
		
		\begin{scope}[
			yshift=-48,every node/.append style={
				yslant=0,xslant=0},yslant=0,xslant=0
			]
			\fill[white,fill opacity=.9] (0,0) rectangle (7.2,0.8);
			\draw[black,very thick] (0,0) rectangle (7.2,0.8);
			\draw[step=72mm, black] (0,0) grid (7.2,0.8);
		\end{scope}  
		
		\node  at  (3.6,-1.3) {+1};

		\draw[red,vecArrow]  (3.6,0) -- (3.6,-0.9);

		\draw[red,VecArrow] (1.2,1.7) -- (1.2,0.8) ;
		\draw[red,VecArrow] (3.6,1.7) -- (3.6,0.8);
		\draw[red,VecArrow] (6,1.7) -- (6,0.8);
		
		\draw[red,vecArrow]   (3.6,-1.7) -- (3.6,-2.6);
		
		\draw[-latex,thick,blue](8.2,-0.4)node[right]{new ``block spin " $\{S_{I}\}$}
		to[out=180,in=330] (5,0);
		\draw[-latex,thick,blue](7.5,1.3)node[right]{original spin $\{\sigma^{i}_{I},i=1,2,3\}$}
		to[out=180,in=30] (4,2.5);
		\draw[-latex,thick,red](-1.8,0.4)node[left]{coarse-graining}to[out=0,in=180] (1.2,1.4);
		\draw[-latex,thick,red](-1.8,0.4)node[left]{coarse-graining}to[out=0,in=180] (3.5,-0.4);
		
	\end{tikzpicture}
	\caption{Coarse-graining for the one-dimensional Ising model.}
	\label{Fig:1}
\end{figure}
In principle, different coarse-graining procedures lead to different renormalization group schemes; see  \cite{Cardy1996book,Morandi2004document} for more detailed review of this procedure. Then a natural idea emerges, if we introduce a relationship which connects the original ``spins'' measure and new ``block spins'' measure from another point of view, it may bring some amazing observations with the help of new tools, which is exactly  what we attempt in this paper.

Different from the works before, from the perspective of dynamics, the Markov transition function could be constructed through a stochastic differential equation, more precisely, the stochastic quantization equation, which is proposed by Giorgio Parisi and Wu. Specificly, the basic idea of stochastic quantization is to consider the Euclidean path integral measure as the stationary distribution of a stochastic process; see \cite{Damgaard1987Huffel}. And according to this idea, Parisi and Wu formulated the concept of stochastic quantization as follows
\begin{enumerate}
	\item One supplements the fields $\phi(x)$ with an additional fictitious time $t$, which means $\phi(x)\rightarrow \phi(x,t)$;
	\item One demands that the fictitious time evolution of $\phi$ is described by the Langevin equation(a stochastic differential equation)
	\begin{align*}
		\frac{\partial \phi(x,t)}{\partial t} =-\frac{\delta S_{E} }{\delta \phi(x,t)}+\xi(x,t),
	\end{align*}
	where $\xi(x,t)$ is the time-spatial white noise, and $\delta S_{E}(\phi) /\delta \phi(x,t)$ represents the variation of effective action $S_{E}(\phi)$ with regard to field $\phi$. 
\end{enumerate}

Inspired by the idea of stochastic quantization, we begin to construct the transition probability function in renormalization  procedure of one-dimensional exactly solved Ising model through a Langevin equation.
From now on, we consider the Ising-like Hamiltonian without external field 
\begin{align*}
	S_{E}=\mathcal{H} =-K\sum_{i} \phi(x_{i})\phi(x_{i+1})+\frac{\gamma}{2}\sum_{i} \phi^{2}(x_{i}),
\end{align*}
where $\gamma$ is constant offset such that the Hamiltonian is positive definite; see \cite{Li2018document} for the similar reason when decoupling the Ising spins.

Then, calculating the variation of Hamiltonian $\mathcal{H}(\phi)$
\begin{align*}
	-\frac{\delta \mathcal{H} }{\delta \phi(x,t)} =K\sum_{i}\frac{\delta (\phi(x_{i},t)\phi(x_{i+1},t)) }{\delta \phi(x_{i})}-\gamma \phi(x_{i},t)= K[\phi(x_{i-1},t)+\phi(x_{i+1},t)]-\gamma \phi(x_{i},t),
\end{align*} 
we can get the stochastic quantization equation formally
\begin{align}\label{1.2}
	\frac{\partial \phi(x_{i},t)}{\partial t} &=K[ \phi(x_{i-1},t)+\phi(x_{i+1},t)-2\phi(x_{i},t)]+(2K-\gamma)\phi(x_{i},t)+\xi(x_{i},t)\nonumber\\
	&=K\tilde{\Delta}\phi(x_{i},t)+(2K-\gamma)\phi(x_{i},t)+\xi(x_{i},t),
\end{align} 
where $\xi(x_{i},t)$ is the time-spatial white noise, and $\tilde{\Delta}$ is the Laplacian on lattice
$$\tilde{\Delta}\phi(x_{i},t)= \phi(x_{i-1},t)+\phi(x_{i+1},t)-2\phi(x_{i},t).$$
If we use  Euler’s method in time discretisation\cite{Gyongydocument1999}, then we obtain the following explicit scheme
\begin{align}\label{eq:1.6}
	\phi(x_{i},t+1)=K\tilde{\Delta}\phi(x_{i},t)+(2K-\gamma+1)\phi(x_{i},t)+\xi(x_{i},t),
\end{align}

The intuition here is very clear, the informations of spins $\phi(x_{i},t+1)$ at time $t+1$ are totally determined by values of spins at time $t$ and spatial coordinates nearest $x_{i}$, in another word, $\phi(x_{i-1},t)$, $\phi(x_{i},t)$ and $\phi(x_{i+1},t)$, which is similar to the renormalization  procedure described above. The time $t$ above does not denote physical time, but rather an $RG \ time$, just as what Carosso did in paper \cite{Carosso2020document}. Obviously, the equation ~\eqref{eq:1.6}~can induce a Markov transition probability which connects the measure of ``orginal spins" at time $t$ and ``new block spins" at time $t+1$. Everything seems fascinating except that the spins $\phi(x_{i},t)$ here do not take values in $\{+1,-1\}$ owing to the noise we introduced. Fortunately, let's recall the renormalization procedure above carefully, the similar circumstance has ever occurred in the  coarse-graining for the one-dimensional Ising model. Physicists use~\eqref{1.4}~to give us a clever method to deal with this trouble
\begin{align*}
	S_{I}=sign\{\sigma^{I}_{1}+\sigma^{I}_{2}+\sigma^{I}_{3}\}.
\end{align*}
This is exactly what we need now. By this way, a new of renormalization  procedure can be established 
\begin{align}\label{eq:1.7}
	\phi(x_{i},t+1)=sgn\{K\tilde{\Delta}\phi(x_{i},t)+(2K-\gamma+1)\phi(x_{i},t)+\xi(x_{i},t)\},
\end{align}
Then the classical real space renormalization  procedure can be realized formally as above; see Figure \ref{Fig:2} for details. And the most important goal in this paper is to study the statistic mechanic properties of the discrete time stochastic dynamic above.
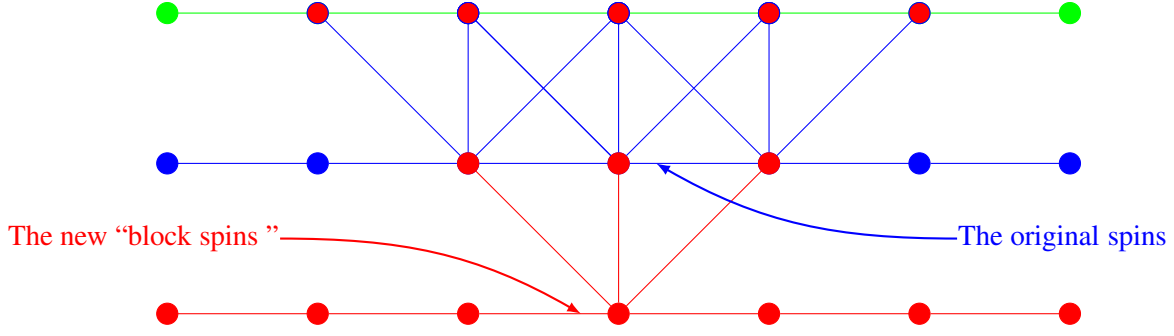
\begin{figure}
	\def\width{12}
	\def\hauteur{4}	
	\begin{tikzpicture}[x=1cm, y=1cm]

		\tikzstyle{every node}=[draw,circle,fill=green,minimum size=8pt,
		inner sep=0pt]            
		\draw [green](0,4) node (111){}
		-- ++(2,0) node (113)  {};
		\draw[green] (113){}
		-- ++(2,0) node (114)  {};
		\draw[green] (114){}
		-- ++(2,0) node (115)  {};
		\draw[green] (115){}
		-- ++(2,0) node (116)  {};
		\draw[green] (116){}
		-- ++(2,0) node (117)  {};
		\draw[green] (117){}
		-- ++(2,0) node (118)  {};

		\tikzstyle{every node}=[draw,circle,fill=blue,minimum size=8pt,
		inner sep=0pt]            
		\draw [blue](0,2) node (121){}
		-- ++(2,0) node (123)  {};
		\draw[blue] (123){}
		-- ++(2,0) node (124)  {};
		\draw[blue] (124){}
		-- ++(2,0) node (125)  {};
		\draw[blue] (125){}
		-- ++(2,0) node (126)  {};
		\draw[blue] (126){}
		-- ++(2,0) node (127)  {};
		\draw[blue] (127){}
		-- ++(2,0) node (128)  {};

		\tikzstyle{every node}=[draw,circle,fill=red,minimum size=8pt,
		inner sep=0pt]            
		\draw [red](0,0) node (131){}
		-- ++(2,0) node (133)  {};
		\draw[red] (133){}
		-- ++(2,0) node (134)  {};
		\draw[red] (134){}
		-- ++(2,0) node (135)  {};
		\draw[red] (135){}
		-- ++(2,0) node (136)  {};
		\draw[red] (136){}
		-- ++(2,0) node (137)  {};
		\draw[red] (137){}
		-- ++(2,0) node (138)  {};

		\draw[red] (135){}
		-- ++(-2,2) node (124)  {};
		\draw[red] (135){}
		-- ++(0,2) node (125)  {};
		\draw[blue] (124){}
		-- ++(-2,2) node (113)  {};
		\draw[blue] (124){}
		-- ++(0,2) node (114)  {};
		\draw[blue] (124){}
		-- ++(2,2) node (115)  {};
		\draw[blue] (125){}
		-- ++(-2,2) node (114)  {};
		\draw[blue] (125){}
		-- ++(-2,2) node (114)  {};
		\draw[blue] (125){}
		-- ++(0,2) node (115)  {};
		\draw[blue] (125){}
		-- ++(2,2) node (116)  {};

		\draw[red] (135){}
		-- ++(2,2) node (126)  {};
		\draw[blue] (126){}
		-- ++(2,2) node (117)  {};	
		\draw[blue] (126){}
		-- ++(0,2) node (116)  {};
		\draw[blue] (126){}
		-- ++(-2,2) node (115)  {};
		
		\tikzstyle{every node}=[minimum size=8pt,
		inner sep=0pt]

		\draw[-latex,thick,red](1.5,1)node[left]{The new ``block spins " }
		to[out=0,in=150] (5.5,0);	
		\draw[-latex,thick,blue](10.5,1)node[right]{The original spins}to[out=180,in=330] (6.5,2);
		
	\end{tikzpicture}
	\caption{The new ``block spins " are determined by the original spins.}
	\label{Fig:2}
\end{figure}
\section{The carr\'e du champ operator $\Gamma$ and Bakry-\'Emery curvature}\label{sec:3}
\subsection{The continuization of Markov chain }\label{sec:3.1}
First of all, let's recall some of the essential basic tools needed in the rest of
this paper. Consider the discrete time Markov chain $X_{n}$ defined on configuration space $E=\{+ 1,-1\}^{\mathbb{Z}}$, let $N_{t}$  be the Poission process with parameter $\lambda=1$, which is independent to $X_{n}$. Denote
\begin{align*}
	X_{t}=X_{N_{t}},
\end{align*}
which is a continuous time process with the generator
\begin{align}\label{eq:3.1}
	\mathcal{L}f(\eta )=\int_{\omega \in E} [f(\omega )-f(\eta)]p(\eta,d\omega ),
\end{align}
where $p(\eta,d\omega )$ represents the transition probability function of Markov chain $X_{n}$, and  
the semi-group corresponding to the generator $\mathcal{L}$ is given by	
\begin{align*}
	\mathcal{P}_{t}f(\eta )=\int_{\omega \in E} f(\omega )p(t,\eta,d\omega ).
\end{align*}
\begin{definition}
	Let $\mathcal{L}$ is the generator of semi-group $\mathcal{P}_{t}$, denote $\mathcal{D}(\mathcal{L})$ be the domain of the generator $\mathcal{L}$. Assume $\mathcal{A}$ is a dense subspace of $\mathcal{D}(\mathcal{L})$, which is algebra closed under multiplication. The carr\'e du champ operator $\Gamma$ can be defined as
	\begin{align*}
		2\Gamma(f,g)(\eta )=\mathcal{L}(fg)(\eta )-f(\eta )\mathcal{L}(g)(\eta )-g(\eta )\mathcal{L}(f)(\eta ),\ f,g \in \mathcal{A}.
	\end{align*}
	and $\Gamma_{2}$ can be defined by iterating $\Gamma$ as 
	\begin{align*}
		2\Gamma_{2}(f,g)(\eta )=\mathcal{L}(\Gamma(f,g))(\eta )-\Gamma(f,\mathcal{L}(g))(\eta )-\Gamma(g,\mathcal{L}(f))(\eta ),\ f,g \in \mathcal{A}.
	\end{align*}
	More explicitly, for the generator ~\eqref{eq:3.1}~, we have
	\begin{align*}
		2\Gamma(f,g)(\eta)&=\int_{\omega\in E}(f(\omega )-f(\eta))(g(\omega )-g(\eta))p(\eta,d\omega ),\\
		4\Gamma_{2}(f,f)(\eta)&=\int_{\omega\in E}\int_{z\in E}(f(z)-2f(\omega )+f(\eta))^{2}p(\omega,dz )p(\eta,d\omega )\nonumber\\
		&-2\{\int_{\omega\in E}f^{2}(\omega )p(\eta,d\omega )-\left(\int_{\omega\in E}f(\omega)p(\eta,d\omega )\right)^{2}\}.
	\end{align*}
\end{definition}
\subsection{The representation of $\Gamma$ and $\Gamma_{2}$}\label{sec:3.2}
From now on, we are going to consider a class of two-points functions $f(\eta)$, which only depend on spatial points $x_{1}$ and $x_{2}$, that is to say, for  $\eta \in E$, $f(\eta)=f(\eta(x_{1}),\eta(x_{2}))$. Conveniently, let
\begin{align*}
	E_{2}=\{(\eta(x_{1}),\eta(x_{2})):(+1,+1)=S_{1},
	(-1,+1)=S_{2},
	(+1,-1)=S_{3},
	(-1,-1)=S_{4}\},
\end{align*}
be the restriction on  coordinates $x_{1}$ and $x_{2}$  of configuration $\eta \in E$. Denote 
\begin{align}\label{eq:3.2.1}
	\mathbb{B}(\eta):= \left(\begin{array}{cccccccccc} 
		b_{1}(\eta)(T-t)\\
		b_{2}(\eta)(T-t)\\
		b_{3}(\eta)(T-t)\\
		b_{4}(\eta)(T-t)\end{array}\right )=\left(\begin{array}{cccccccccc} 
		p(T-t,\eta,S_{1})\\
		p(T-t,\eta,S_{2})\\
		p(T-t,\eta,S_{3})\\
		p(T-t,\eta,S_{4})\end{array}\right ),
	\vec{f}(\eta):= \left(\begin{array}{cccccccccc} 
		f(S_{1})\\
		f(S_{2})\\
		f(S_{3})\\
		f(S_{4})\end{array}\right ).
\end{align}
Then, we can express $F(t,\eta)=\mathcal{P}_{T-t}f(\eta )$ explicitly 
\begin{align}\label{eq:3.2}
	F(\eta):= F(t,\eta)=\mathcal{P}_{T-t}f(\eta )&=\int_{\omega\in E}f(\omega )p(T-t,\eta,d\omega )\nonumber\\
	&=\sum_{E_{2}}f(\omega(x_{1}),\omega(x_{2}) )p(T-t,\eta,(\omega(x_{1}),\omega(x_{2}) ))=
	(\vec{f})^{T}
	\cdot \mathbb{B}(\eta).
\end{align}
Due to equality~\eqref{eq:3.2}~above
\begin{align*}
	F(\omega )-F(\eta)=(\vec{f})^{T}
	\cdot \left(\mathbb{B}(\omega)- \mathbb{B}(\eta) \right ).
\end{align*}
Then operator $\Gamma$ can be written as
\begin{align}\label{eq:3.3}
	2\Gamma(F,F)(\eta)&=\int_{\omega\in E}(F(\omega )-F(\eta))^{2}p(\eta,d\omega )=(\vec{f})^{T}
	\cdot N(T-t,K)(\eta)
	\cdot \vec{f},
\end{align}
where
\begin{align*}
	N(T-t,K)(\eta)=\int_{\omega\in E}\left(\mathbb{B}(\omega)- \mathbb{B}(\eta) \right )
	\cdot \left(\mathbb{B}(\omega)- \mathbb{B}(\eta) \right )^{T}p(\eta,d\omega ).
\end{align*}
Noticed that for any $\omega \in E$,  $\sum_{k=1}^{4}b_{k}(\omega)=1$, then
\begin{align*}
	\mathbb{B}_{rem,1}(\omega):= Q_{1}\mathbb{B}(\omega)=\left(\begin{array}{cccccccccccccccc} 
		\sum_{k=1}^{4}b_{k}(\omega)\\
		b_{2}(\omega)\\
		b_{3}(\omega) \\
		b_{4}(\omega)
	\end{array}\right ) 
	=\left(\begin{array}{cccccccccccccccc} 
		1\\
		b_{2}(\omega)\\
		b_{3}(\omega) \\
		b_{4}(\omega)
	\end{array}\right ),\
	\text{where}\
	Q_{1}=\left(\begin{array}{cccccccccccccccc} 
		1&1&1&1\\
		0&1&0& 0\\
		0&0&1&0 \\
		0&0&0&1
	\end{array}\right ). 
\end{align*}
Hence, for operator $\Gamma$
\begin{align*}
	Q_{1}N(T-t,K)Q_{1}^{T}&=\int_{\omega\in E}Q_{1}\left(\mathbb{B}(\omega)- \mathbb{B}(\eta) \right)\cdot \left(\mathbb{B}(\omega)- \mathbb{B}(\eta) \right)^{T}Q_{1}^{T}p(\eta,d\omega )\\
	&=\int_{\omega\in E}\left(\mathbb{B}_{rem,1}(\omega)- \mathbb{B}_{rem,1}(\eta)\right)\cdot \left(\mathbb{B}_{rem,1}(\omega)- \mathbb{B}_{rem,1}(\eta)\right)^{T}p(\eta,d\omega )\\
	&=\left(\begin{array}{cccccccccc} 
		0&0&0&0\\
		0&\\
		0&&N_{rem,1}(T-t,K)\\
		0&\end{array}\right ).
\end{align*}
where $N_{rem,k}$ is the submatrix of $N$, which remove the $k$-th row and $k$-th column, and 
\begin{align*}
	(N_{rem,1})_{ij}=\int_{\omega\in E}(b_{i}(\omega)-b_{i}(\eta))(b_{j}(\omega)-b_{j}(\eta))p(\eta,d\omega ),i,j=2,3,4.
\end{align*}
Similarly, denote
\begin{align*}
	\mathbb{A}(z,\omega,\eta):= \left(\begin{array}{cccccccccc} 
		a_{1}(z,\omega,\eta)\\
		a_{2}(z,\omega,\eta)\\
		a_{3}(z,\omega,\eta)\\
		a_{4}(z,\omega,\eta)\end{array}\right )=\left(\begin{array}{cccccccccc} 
		b_{1}(z)-2b_{1}(\omega)+b_{1}(\eta)\\
		b_{2}(z)-2b_{2}(\omega)+b_{2}(\eta)\\
		b_{3}(z)-2b_{3}(\omega)+b_{3}(\eta)\\
		b_{4}(z)-2b_{4}(\omega)+b_{4}(\eta)\end{array}\right ),
\end{align*}
So
\begin{align*}
	F(z)-2F(\omega )+F(\eta)=&(\vec{f})^{T}
	\cdot \left(\mathbb{B}(z)-2\mathbb{B}(\omega)+ \mathbb{B}(\eta) \right )=(\vec{f})^{T} \cdot \mathbb{A}(z,\omega,\eta) .
\end{align*}
Then $\Gamma_{2}$ can be written as
\begin{align}\label{eq:3.4}
	4\Gamma_{2}(F,F)(\eta)&=\int_{\omega\in E}\int_{z\in E}(F(z)-2F(\omega )+F(\eta))^{2}p(\omega,dz )p(\eta,d\omega )\nonumber\\
	&-2\{\int_{\omega\in E}F^{2}(\omega )p(\eta,d\omega )-\left(\int_{\omega\in E}F(\omega)p(\eta,d\omega )\right)^{2}\}=(\vec{f})^{T}
	\cdot M(T-t,K)(\eta)
	\cdot \vec{f},
\end{align}
where
\begin{align*}
	&M(T-t,K)(\eta)=I^{1}(T-t,K)(\eta)-2I^{2}(T-t,K)(\eta)+2I^{3}(T-t,K)(\eta),\\
	&I^{1}(T-t,K)(\eta)=\int_{\omega\in E}\int_{z\in E}\mathbb{A}(z,\omega,\eta) \cdot \mathbb{A}^{T}(z,\omega,\eta)p(\omega,dz )p(\eta,d\omega ),\\
	&I^{2}(T-t,K)(\eta)=\int_{\omega\in E}\mathbb{B}(\omega) \cdot \mathbb{B}^{T}(\omega)p(\eta,d\omega ),\\
	&I^{3}(T-t,K)(\eta)=\int_{\omega\in E}\mathbb{B}(\omega)p(\eta,d\omega ) \cdot \int_{\omega\in E}\mathbb{B}^{T}(\omega)p(\eta,d\omega ).
\end{align*}
Hence, for operator $\Gamma_{2}$ 
\begin{align*}
	Q_{1}I^{1}(T-t,K)(\eta)Q_{1}^{T}&=\int_{\omega\in E}\int_{z\in E}Q_{1}\mathbb{A}(z,\omega,\eta) \cdot \mathbb{A}^{T}(z,\omega,\eta)Q_{1}^{T}p(\omega,dz )p(\eta,d\omega )\\
	&=\int_{\omega\in E}\int_{z\in E}\mathbb{A}_{rem,1}(z,\omega,\eta) \cdot \mathbb{A}_{rem,1}^{T}(z,\omega,\eta)p(\omega,dz )p(\eta,d\omega )\\
	&=\left(\begin{array}{cccccccccc} 
		0&0&0&0\\
		0&\\
		0&&A_{rem,1}\\
		0&\end{array}\right ),
\end{align*}
where $\mathbb{A}_{rem,1}(z,\omega,\eta)$ is similar to $\mathbb{B}_{rem,1}(\omega)$, ie $\mathbb{A}_{rem,1}(z,\omega,\eta):= Q_{1}\mathbb{A}(z,\omega,\eta)$, so
\begin{align*}
	(A_{rem,1})_{ij}=\int_{\omega\in E}\int_{z\in E}a_{i}(z,\omega,\eta)a_{j}(z,\omega,\eta)p(\omega,dz )p(\eta,d\omega ),i,j=2,3,4.
\end{align*}
Furthermore
\begin{align*}
	Q_{1}I^{2}(T-t,K)(\eta)Q_{1}^{T}&-Q_{1}I^{3}(T-t,K)(\eta)Q_{1}^{T}\\
	&=\int_{\omega\in E}Q_{1}\mathbb{B}(\omega) \cdot \mathbb{B}^{T}(\omega)Q_{1}^{T}p(\eta,d\omega )-\int_{\omega\in E}Q_{1}\mathbb{B}(\omega) p(\eta,d\omega ) \cdot \int_{\omega\in E}\mathbb{B}^{T}(\omega)Q_{1}^{T}p(\eta,d\omega )\\
	&=\int_{\omega\in E}\mathbb{B}_{rem,1}(\omega)  \mathbb{B}^{T}_{rem,1}(\omega)p(\eta,d\omega )-\int_{\omega\in E}\mathbb{B}_{rem,1}(\omega)p(\eta,d\omega ) \int_{\omega\in E}\mathbb{B}^{T}_{rem,1}(\omega)p(\eta,d\omega ).
\end{align*}
Then
\begin{align*}
	Q_{1}I^{2}(T-t,K)(\eta)Q_{1}^{T}-Q_{1}I^{3}(T-t,K)(\eta)Q_{1}^{T}=\left(\begin{array}{cccccccccc} 
		0&0&0&0\\
		0&\\
		0&&B_{rem,1}-C_{rem,1}\\
		0&\end{array}\right ),
\end{align*}
where
\begin{align*}
	(B_{rem,1})_{ij}=\int_{\omega\in E}b_{i}(\omega)b_{j}(\omega)p(\eta,d\omega ),\ (C_{rem,1})_{ij}=\int_{\omega\in E}b_{i}(\omega)p(\eta,d\omega )\int_{\omega\in E}b_{j}(\omega)p(\eta,d\omega ),i,j=2,3,4.
\end{align*}
In conclusion, $\Gamma$ and $\Gamma_{2}$ can be represented as
\begin{align}\label{eq:3.5}
	Q_{1}M(T-t,K)Q_{1}^{T}&=\left(\begin{array}{cccccccccc} 
		0&0&0&0\\
		0&\\
		0&&M_{rem,1}(T-t,K)\\
		0&\end{array}\right ),\nonumber\\
	Q_{1}N(T-t,K)Q_{1}^{T}
	&=\left(\begin{array}{cccccccccc} 
		0&0&0&0\\
		0&\\
		0&&N_{rem,1}(T-t,K)\\
		0&\end{array}\right ).
\end{align}
where $M_{rem,k}$ is the submatrix of $M$, which remove the $k$-th row and $k$-th column, and 
\begin{align*}
	(M_{rem,1})_{ij}=(A_{rem,1})_{ij}-2(B_{rem,1})_{ij}+2(C_{rem,1})_{ij},i,j=2,3,4.
\end{align*}
\subsection{The local Poincar\'e  inequality of two-points functions}\label{sec:3.3}
\begin{theorem}\label{thm:3.2}
	For two-points function $f$ defined as above, let  $F(t,\eta)=\mathcal{P}_{T-t}f(\eta ),t\in[0,T]$, $\mathcal{P}_{t}$ is the semi-group corresponding to a Markov process. If there exists a parameter such that the $4\times4$ matrices satisfy
	\begin{align}\label{eq:3.6}
		M(T-t,K)\geq \rho(T-t,K) N(T-t,K),
	\end{align}
	Then the  local Poincar\'e  inequality for any two-points function $f$ holds
	\begin{align}\label{eq:3.7}
		\mathcal{P}_{T}\left(f^{2}\right)(\eta)-(\mathcal{P}_{T}\left(f\right))^{2}(\eta) \leq 2\int_{0}^{T}e^{-\int_{t}^{T}\rho(T-s,K) ds}dt\cdot \mathcal{P}_{T}\left(\Gamma(f,f)\right)(\eta),
	\end{align}
	where $\eta\in E$ is the initial data. Furthermore, define 
	\begin{align*}
		\frac{1}{\rho}=\lim\limits_{T\rightarrow \infty}\int_{0}^{T}e^{-\int_{0}^{t}\rho(s,K) ds}dt.
	\end{align*}
	Assume the semi-group $\mathcal{P}_{t}$ is ergodic in the sense 
	that
	\begin{align*}
		\mu(f)=\lim\limits_{T\rightarrow \infty}(\mathcal{P}_{T}f)(\eta),
	\end{align*}
	where $\mu$ denote the probability measure and $\eta$ is an arbitrary initial data. Then the measure $\mu$ satisfies
	\begin{align}\label{eq:3.8}
		\mu\left(f^{2}\right)-(\mu\left(f\right))^{2}\leq \frac{2}{\rho} \mu\left(\Gamma(f,f)\right),
	\end{align}
	Particularly, if $\rho(t,K)\rightarrow \rho>0$ as $T\rightarrow +\infty$, the inequality~\eqref{eq:3.8}~holds. 
\end{theorem}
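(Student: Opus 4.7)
The plan is to run the standard Bakry-\'Emery interpolation argument, but with the curvature lower bound coming from the matrix inequality (3.6) rather than from a pointwise carr\'e du champ estimate. The first observation is that, by the quadratic-form representations (3.3) and (3.4), the matrix inequality $M(T-t,K)\geq \rho(T-t,K)N(T-t,K)$ translates directly into the pointwise bound $\Gamma_{2}(F,F)(\eta)\geq \tfrac{1}{2}\rho(T-t,K)\,\Gamma(F,F)(\eta)$ for every two-points function $f$, with $F=F(t,\cdot)=\mathcal{P}_{T-t}f$. This replaces the usual scalar Bakry-\'Emery $CD(\rho,\infty)$ hypothesis.

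Next I would carry out the two standard interpolations. Set $G(t)=\mathcal{P}_{t}\!\left(F(t,\cdot)^{2}\right)(\eta)$ for $t\in[0,T]$. Differentiating, using $\partial_{t}F=-\mathcal{L}F$ together with the identity $\mathcal{L}(g^{2})-2g\mathcal{L}g=2\Gamma(g,g)$, gives
\begin{equation*}
G'(t)=2\,\mathcal{P}_{t}\!\left(\Gamma(F,F)\right)(\eta).
\end{equation*}
Since $G(T)-G(0)=\mathcal{P}_{T}(f^{2})(\eta)-(\mathcal{P}_{T}f)^{2}(\eta)$, the variance is already expressed as the time integral of $H(t):=\mathcal{P}_{t}(\Gamma(F,F))(\eta)$. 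For the second interpolation I would differentiate $H(t)$ and use $\mathcal{L}\Gamma(F,F)-2\Gamma(\mathcal{L}F,F)=2\Gamma_{2}(F,F)$, which yields $H'(t)=2\mathcal{P}_{t}(\Gamma_{2}(F,F))(\eta)$; combining with the curvature bound from the first paragraph gives the differential inequality $H'(t)\geq \rho(T-t,K)\,H(t)$.

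Integrating this Gr\"onwall-type inequality backwards from $T$ gives $H(t)\leq H(T)\,\exp\!\left(-\int_{t}^{T}\rho(T-s,K)\,ds\right)$, and since $H(T)=\mathcal{P}_{T}(\Gamma(f,f))(\eta)$, substituting into the formula for the variance produces exactly (3.7). For (3.8) I would pass to the limit $T\to\infty$ in (3.7): the change of variable $u=T-t$ inside, followed by $v=T-s$ inside the inner integral, rewrites
\begin{equation*}
\int_{0}^{T}e^{-\int_{t}^{T}\rho(T-s,K)\,ds}\,dt=\int_{0}^{T}e^{-\int_{0}^{u}\rho(v,K)\,dv}\,du\;\longrightarrow\;\tfrac{1}{\rho},
\end{equation*}
by the very definition of $\rho$, while the ergodicity assumption lets me pass to $\mu(f^{2})$, $(\mu(f))^{2}$ and $\mu(\Gamma(f,f))$ on the two sides. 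The last sentence of the theorem is then the observation that if $\rho(t,K)\to\rho>0$ as $t\to\infty$ then $\int_{0}^{t}\rho(s,K)\,ds\sim\rho t$ so the defining integral is finite and positive, guaranteeing (3.8).

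The main obstacle is the very first step: making sure that the matrix inequality (3.6), which lives in $4\times 4$ matrices that share the same common kernel coming from $\sum_{k}b_{k}=1$, really gives the pointwise $\Gamma_{2}\geq \tfrac{\rho}{2}\Gamma$ for every admissible $\vec f$. This requires reading (3.6) modulo the block structure displayed in (3.5), i.e.\ as $M_{rem,1}\geq \rho\,N_{rem,1}$, and checking that the reduction through $Q_{1}$ is compatible with arbitrary choices of $\vec f$. A secondary technical point is justifying the termwise differentiation of $G$ and $H$ on the infinite configuration space $E=\{+1,-1\}^{\mathbb{Z}}$ and the interchange of $\mathcal{L}$ with $\mathcal{P}_{t}$; because $f$ depends only on two sites this reduces to finite-state computations and should not cause genuine difficulty. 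Everything else is the classical Bakry-\'Emery bookkeeping plus a routine dominated-convergence argument in the ergodic limit.
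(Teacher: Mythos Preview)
Your proposal is correct and follows essentially the same Bakry--\'Emery interpolation argument as the paper: your $G(t)$ is the paper's $\Lambda(t,\eta)$, your $H(t)$ is $\tfrac{1}{2}\Lambda'(t,\eta)$, and the differential inequality $H'\geq \rho(T-t,K)H$ is exactly the paper's $\Lambda''\geq \rho(T-t,K)\Lambda'$, integrated in the same way to reach~(3.7).

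The only noteworthy difference is in the passage to the ergodic limit. You pass to $T\to\infty$ directly in~(3.7) after the change of variables, invoking the very definition of $1/\rho$; the paper instead works under the specific hypothesis $\rho(t,K)\to\rho>0$, splits the time interval at a large $T^{*}$, bounds the tail by $e^{-\rho(t-T^{*})/2}$, and then invokes a separate lemma (Lemma~A.1) to show $e^{-\int_{0}^{T^{*}}\rho(s,K)\,ds}\to 0$. Your route is cleaner and covers the general statement~(3.8) in one stroke, while the paper's splitting argument is more explicit about why the ``Particularly'' clause guarantees a finite positive $1/\rho$. The concern you raise about whether~(3.6) yields the pointwise $\Gamma_{2}\geq\tfrac{\rho}{2}\Gamma$ is not a genuine obstacle: since $2\Gamma(F,F)=\vec f^{\,T}N\vec f$ and $4\Gamma_{2}(F,F)=\vec f^{\,T}M\vec f$ by~(3.3)--(3.4), the matrix order $M\geq\rho N$ immediately gives the scalar inequality for every $\vec f$, with no need to pass through the $Q_{1}$-reduction.
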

\begin{proof}
	Denote that $\Lambda(t,\eta )=\mathcal{P}_{t}\left(F^{2}(t,\cdot)\right)(\eta )=\left(\mathcal{P}_{t}\left(\mathcal{P}_{T-t} f\right)^{2}\right)(\eta )$, the first derivative of $\Lambda(t,\eta )$ with respect to time $t$ is
	\begin{align*}
		\Lambda^{'}(t,\eta )&=\frac{d}{dt}\left(\mathcal{P}_{t}\left(F^{2}(t,\cdot)\right)\right)(\eta )=\mathcal{P}_{t}\left(\mathcal{L}F^{2}(t,\cdot)\right)(\eta )+\mathcal{P}_{t}\left(\frac{d}{dt}F^{2}(t,\cdot)\right)(\eta )\\
		&=\mathcal{P}_{t}\left(\mathcal{L}F^{2}(t,\cdot)\right)(\eta )-2\mathcal{P}_{t}\left(F\mathcal{L}F(t,\cdot)\right)(\eta )=2\mathcal{P}_{t}\left(\Gamma(F,F)\right)(\eta),
	\end{align*}
	Since
	\begin{align*}
		2\frac{d}{dt}\Gamma(F,F)(\eta)&=\frac{d}{dt}\mathcal{L}F^{2}(t,\cdot)(\eta)+2\mathcal{L}F\mathcal{L}F(t,\cdot)(\eta)+2F\mathcal{L}\mathcal{L}F(t,\cdot)(\eta)\\
		&=-2\mathcal{L}\left(F\mathcal{L}F(t,\cdot)\right)(\eta)+2\mathcal{L}F\mathcal{L}F(t,\cdot)(\eta)+2F\mathcal{L}\mathcal{L}F(t,\cdot)(\eta)\\
		&=-4\Gamma(F(t,\cdot),\mathcal{L}F(t,\cdot))(\eta).
	\end{align*}
	Then the second derivative is given by
	\begin{align*}
		\Lambda^{''}(t,\eta )&=2\frac{d}{dt}\mathcal{P}_{t}\left(\Gamma(F,F)\right)(\eta)=2\mathcal{P}_{t}\left(\mathcal{L}\Gamma(F,F)\right)(\eta)+2\mathcal{P}_{t}\left(\frac{d}{dt}\Gamma(F,F)\right)(\eta )\\
		&=2\mathcal{P}_{t}\left(\mathcal{L}\Gamma(F,F)\right)(\eta)-4\mathcal{P}_{t}\left(\Gamma(F,\mathcal{L}F)\right)(\eta )=4\mathcal{P}_{t}\left(\Gamma_{2}(F,F)\right)(\eta).
	\end{align*}
	Owing to the expansion~\eqref{eq:3.3}~and~\eqref{eq:3.4}~, if condition~\eqref{eq:3.6}~satisfies, then 
	\begin{align*}
		\Gamma_{2}(F,F)(\eta)\geq \frac{\rho(T-t,K)}{2}\Gamma(F,F)(\eta),
	\end{align*} 
	which means
	\begin{align*}
		\Lambda^{''}(t,\eta )&\geq \rho(T-t,K) \Lambda^{'}(t,\eta ),
	\end{align*}
	Then
	\begin{align*}
		\Lambda^{'}(t,\eta )&\leq \Lambda^{'}(T,\eta )e^{-\int_{t}^{T}\rho(T-s,K)ds},
	\end{align*}
	Integrating both sides of last inequality from $t$ to $T$
	\begin{align*}
		\Lambda(T,\eta )-\Lambda(0,\eta )=\mathcal{P}_{T}\left(f^{2}\right)(\eta)-(\mathcal{P}_{T}\left(f\right))^{2}(\eta) \leq \Lambda^{'}(T,\eta )\int_{0}^{T}e^{-\int_{t}^{T}\rho(T-s,K) ds}dt.
	\end{align*}
	Noticed that $\Lambda^{'}(T,\eta )=2\mathcal{P}_{T}\left(\Gamma(f,f)\right)(\eta)$, then for any two-points function $f$, the following local Poincar\'e inequality holds
	\begin{align*}
		\mathcal{P}_{T}\left(f^{2}\right)(\eta)-(\mathcal{P}_{T}\left(f\right))^{2}(\eta) \leq 2\int_{0}^{T}e^{-\int_{t}^{T}\rho(T-s,K) ds}dt\cdot \mathcal{P}_{T}\left(\Gamma(f,f)\right)(\eta).
	\end{align*}
	According to the condition $\rho(t,K)\rightarrow \rho>0$, there exists $T^{*}$ large enough such that for $t\geq T^{*}$, $\rho(t,K)\geq \frac{\rho}{2}>0$, then
	\begin{align*}
		\mathcal{P}_{T}\left(f^{2}\right)(\eta)-(\mathcal{P}_{T}\left(f\right))^{2}(\eta) \leq& 2\int_{0}^{T}e^{-\int_{0}^{t}\rho(s,K) ds}dt\cdot \mathcal{P}_{T}\left(\Gamma(f,f)\right)(\eta)\\
		\leq&2\int_{0}^{T^{*}}e^{-\int_{0}^{t}\rho(s,K) ds}dt\cdot \mathcal{P}_{T}\left(\Gamma(f,f)\right)(\eta)\\
		+&2\int_{T^{*}}^{T}e^{-\int_{0}^{T^{*}}\rho(s,K) ds}e^{-\frac{\rho}{2}(t-T^{*})}dt\cdot \mathcal{P}_{T}\left(\Gamma(f,f)\right)(\eta)\\
		=&2\int_{0}^{T^{*}}e^{-\int_{0}^{t}\rho(s,K) ds}dt\cdot \mathcal{P}_{T}\left(\Gamma(f,f)\right)(\eta)\\
		+&2(\frac{2}{\rho}-\frac{2}{\rho}e^{\frac{-\rho(T-T^{*})}{2}})e^{-\int_{0}^{T^{*}}\rho(s,K) ds}\cdot \mathcal{P}_{T}\left(\Gamma(f,f)\right)(\eta).
	\end{align*}
	Let $T\rightarrow \infty$ firstly under the ergodic assumption, we can get
	\begin{align*}
		\mu\left(f^{2}\right)-(\mu\left(f\right))^{2} \leq&2(\int_{0}^{T^{*}}e^{-\int_{0}^{t}\rho(s,K) ds}dt+\frac{2}{\rho}e^{-\int_{0}^{T^{*}}\rho(s,K) ds})\cdot \mu\left(\Gamma(f,f)\right).
	\end{align*}
	Denote function  $h(t)=e^{-\int_{0}^{t}\rho(s,K) ds}$, because $\rho(t,K)\rightarrow \rho>0$, then $h(t)$ is decreasing on [$t_{0}$,+$\infty$), where $t_{0}$ is a constant sufficiently large.
	Recall that
	\begin{align*}
		\frac{1}{\rho}=\lim\limits_{T\rightarrow \infty}\int_{0}^{T}e^{-\int_{0}^{t}\rho(s,K) ds}dt=\lim\limits_{T\rightarrow \infty}\int_{0}^{T}h(t)dt,
	\end{align*}
	according to Lemma \ref{lem:A1} in appendix, we obtain
	$$\lim\limits_{t\rightarrow +\infty}th(t)=0.$$
	Since $T^{*}$ is large enough, then let $T^{*}\rightarrow \infty$
	\begin{align*}
		\lim\limits_{T^{*}\rightarrow \infty}e^{-\int_{0}^{T^{*}}\rho(s,K) ds}=0.
	\end{align*}
	Finally,
	\begin{align*}
		\mu\left(f^{2}\right)-(\mu\left(f\right))^{2} \leq&\frac{2}{\rho} \mu\left(\Gamma(f,f)\right),
	\end{align*}
	The proof of our main theorem is now complete.
\end{proof}
\begin{remark}
	A natural question is whether the Log-Sobolev inequality can be established by this way, unfortunately, for the generator defined on discrete configuration space, the Log-Sobolev inequality is very hard. It need some other methods to estimate the ralative entropy, which will be talked in our companion paper.
\end{remark}
\section{Application to the stochastic dynamic induced by real space renormalization procedure  }\label{sec:4}
\subsection{Our model and assumption}\label{sec:4.0}
For configuration  $\phi(x,t)=(\cdots,\phi(x_{-1},t),\phi(x_{0},t),\phi(x_{1},t),\cdots)$ in  $E=\{+ 1,-1\}^{\mathbb{Z}}$, let $\Pi_{\{i\}}$ be the projection of $\phi(x,t)$ at $i\in \mathbb{Z}$, which means $\Pi_{\{i\}}x=x_{i},\Pi_{\{i\}}\phi(x,t)=\phi(x_{i},t).$  Hence, equation~\eqref{eq:1.7}~in section \ref{sec:2} can be written as
\begin{align}\label{eq:1.8}
	\phi(x,t+1)=\mathcal{P}(\phi(x,t),\xi(x,t)),
\end{align}
where
\begin{align*}
	\Pi_{\{i\}}\mathcal{P}(\phi(x,t),\xi(x,t))=sgn\{K\tilde{\Delta}\phi(x_{i},t)+(2K-\gamma+1)\phi(x_{i},t)+\xi(x_{i},t)\}.
\end{align*}
The stochastic dynamic~\eqref{eq:1.8}~ can be seen as a Markov chain on state space $E=\{+ 1,-1\}^{\mathbb{Z}}$. Denote $X_{n}$ be the Markov chain induced by the stochastic dynamic~\eqref{eq:1.8}~, which satisfies
\begin{align}\label{eq:1.3}
	X(x_{i},n+1)=sgn\{K(X(x_{i}-1,n)+X(x_{i}+1,n))+(1-\gamma)X(x_{i},n)+\xi(x_{i},n)\},
\end{align}
where configuration is $\omega  \in E$, and $X_{n}$ is time-homogeneous when the model parameter $K$ is a constant. 

In this paper, we mainly establish the local Poincar\'e inequality through the  $\Gamma$ calculus, and get the estimate of two-points correlation function of the stochastic dynamic~\eqref{eq:1.8}~. Based on the extension of Bakry-\'Emery criterion in Section \ref{sec:3}, a series of matrix transforms are necessary in our proof, and
Figure \ref{Fig:3} provides a diagram sketching the relationship between the main lemmas.
\begin{figure}
	\begin{center}	
		\pgfdeclarelayer{background}
		\pgfdeclarelayer{foreground}
		\pgfsetlayers{background,main,foreground}
		
		
		\tikzstyle{sensor}=[draw, fill=blue!20, text width=3em, 
		text centered, minimum height=2.5em,rounded corners,drop shadow]
		\tikzstyle{Sensor}=[draw, fill=blue!20, text width=5em, 
		text centered, minimum height=2.5em,rounded corners,drop shadow]
		\tikzstyle{ann} = [above, text width=5em, text centered]
		\tikzstyle{wa} = [sensor, text width=5em, fill=orange!60, 
		minimum height=3em, rounded corners, drop shadow]
		\tikzstyle{sc} = [sensor, text width=10em, fill=red!50, 
		minimum height=3em, rounded corners, drop shadow]
		
		\def\blockdist{2.3}
		\def\edgedist{2.5}
		
		\begin{tikzpicture}
			\node (wa) [wa]  {Theorem 4.6};
			
			\path (wa.west)+(-1.5,0) node (asr5)[sensor] {Lemma 4.5};
			
			\path (asr5.north)+(0,1.5) node (asr4)[sensor] {Lemma 4.4}; 
			\path (asr4.east)+(1.5,0) node (asr2)[sensor] {Lemma 4.2};
			\path (asr2.north)+(0,1.5) node (asr3)[sensor] {Lemma 4.3};  
			\path (asr3.west)+(-1.5,0) node (asr1)[sensor] {Lemma 4.1};  
			\draw[fill=green!50][rounded corners = 0.2cm](1.5,-0.8)--(0.5,-0.8)[rounded corners = 0.2cm] 
			-- (0.5,-2.2)[sharp corners]--(1.5,-2.2)[sharp corners]--(2.5,-1.5)[sharp corners]--cycle;
			\path (1.4,-1.5) node (a) {Embedding};
			
			\draw[-latex,thick,blue](asr1)node[left]{}to[out=180,in=120] (asr5);
			\draw[-latex,thick,blue](asr3)node[left]{}to[out=270,in=90] (asr2);
			\draw[-latex,thick,blue](asr2)node[left]{}to[out=180,in=0] (asr4);
			\draw[-latex,thick,blue](asr4)node[left]{}to[out=270,in=90] (asr5);
			\draw[-latex,thick,blue](asr5)node[left]{}to[out=0,in=180] (wa);
			
			\path (wa.south) +(-1,-0.95*\blockdist) node (asrs) {Discrete time Markov chain};
			
			\begin{pgfonlayer}{background}
				\path (asr1.west |- asr1.north)+(-0.8,0.3) node (a) {};
				\path (wa.south -| wa.east)+(+0.3,-0.3) node (b) {};
				\path (wa.east |- asrs.east)+(+0.5,-0.6) node (c) {};
				
				\path[fill=yellow!20,rounded corners, draw=black!50, dashed]
				(a) rectangle (c);           
				\path (asr1.north west)+(-0.2,0.2) node (a) {};
				
			\end{pgfonlayer}
			
			\begin{pgfonlayer}{background}
				\path (asr3.west |- asr3.north)+(-0.5,0.3) node (a) {};
				\path (asr2.east -| asr2.north)+(0.5,-0.3) node (b) {};
				\path (asr2.south |- asr2.east)+(+1.5,-0.9) node (c) {};
				
				\path[fill=green!50,rounded corners, draw=black!50, dashed]
				(a) rectangle (c);           
				\path (asr2.east)+(0.7,0) node (d) {};
				
			\end{pgfonlayer}

			\path (wa.east)+(4,-1.5) node (syscomb) [sc] {Theorem 4.12};
			\path (syscomb.north)+(0,1) node (asrt10)[Sensor] {Proposition 4.10};
			\path (asrt10.north)+(0,1.5) node (asrt7)[Sensor] {Proposition 4.7};   
			\path (asrt7.west)+(-1,0) node (asrt8) [sensor] {Lemma 4.8};
			\path (asrt8.north)+(0,1.5) node (asrt9) [sensor] {Lemma 4.9};
			\path (asrt7.east)+(1,0) node (asrt11)[sensor] {Lemma 4.11};

			\draw[-latex,thick,blue](asrt8)node[left]{}to[out=270,in=90] (asrt10);			\draw[-latex,thick,blue](asrt9)node[left]{}to[out=270,in=90] (asrt8);
			\draw[-latex,thick,blue](asrt10)node[left]{}to[out=270,in=90] (syscomb);
			\draw[-latex,thick,blue](asrt7)node[left]{}to[out=270,in=90] (asrt10);
			\draw[-latex,thick,blue](asrt11)node[left]{}to[out=270,in=90] (asrt10);

			\draw[-latex,thick,green](wa)node[left]{}to[out=0,in=180] (asrt10);

			\path (syscomb.east)+(1.0,0.0) node (bwtn) {};
			
			\begin{pgfonlayer}{background}
				\path (asrt9.west |- asrt9.north)+(-0.5,0.3)node (g) {};
				\path (syscomb.east |- syscomb.south)+(1,-1.3) node (h) {};
				
				\path[fill=yellow!20,rounded corners, draw=black!50, dashed]
				(g) rectangle (h);

				\path (d.north)+(0,1.91) node (e) {};
				\draw[-latex,thick,green](e)node[below] {}to[out=0,in=180] (asrt9);
			\end{pgfonlayer}
			
			\path (syscomb.south) +(0,-0.3*\blockdist) 
			node (asrs) {Continuous time Markov process};
			
		\end{tikzpicture}
		\caption{The proof  sketch.}
		\label{Fig:3}	
	\end{center}
\end{figure}
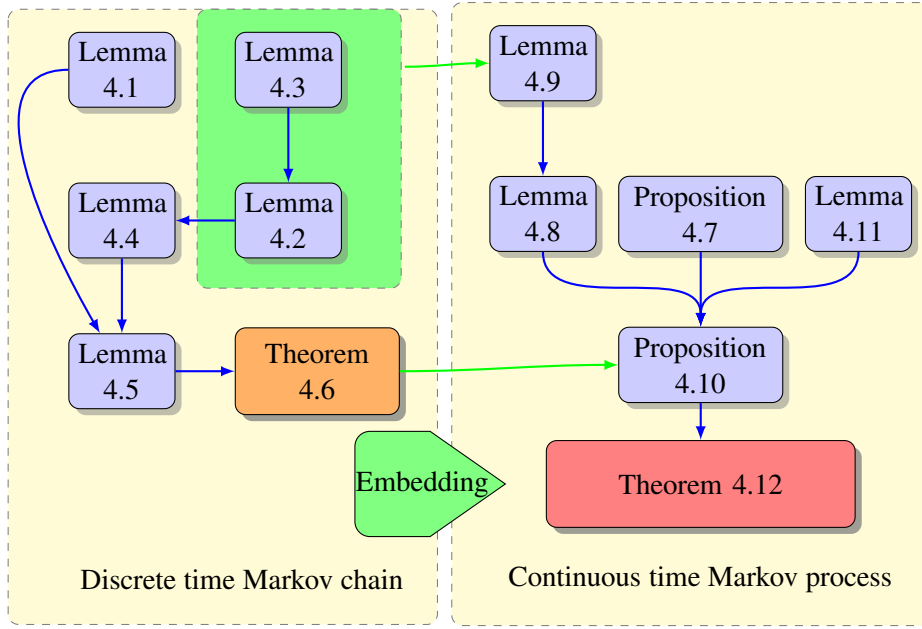
\subsection{The Bakry-\'Emery curvature of discrete time Markov chain}\label{sec:4.1}
In this section, the Markov chain $X(n)$ will be considered, let $p(n,\eta,d\omega)$ be the $n$ steps transition probability with initial data $\eta$, denote as above
$$F(n,\eta)=\mathcal{P}_{n}f(\eta )=\int f(\omega)p(n,\eta,d\omega).$$
As talked in section \ref{sec:3}, $\Gamma(F,F)(\eta)$ can be represented as
\begin{align}\label{eq:4.1}
	2\Gamma(F,F)(\eta)&=\int_{\omega\in E}(F(n,\omega )-F(n,\eta))^{2}p(\eta,d\omega )=(\vec{f})^{T}
	\cdot N(n,K)(\eta)
	\cdot \vec{f},
\end{align}
and $\Gamma_{2}(F,F)(\eta)$ can be represented as
\begin{align}\label{eq:4.2}
	4\Gamma_{2}(F,F)(\eta)&=\int_{\omega\in E}\int_{z\in E}(F(n,z)-2F(n,\omega )+F(n,\eta))^{2}p(\omega,dz )p(\eta,d\omega )\nonumber\\
	&-2\{\int_{\omega\in E}F^{2}(n,\omega )p(\eta,d\omega )-\left(\int_{\omega\in E}F(n,\omega)p(\eta,d\omega )\right)^{2}\}=(\vec{f})^{T}
	\cdot M(n,K)(\eta)
	\cdot \vec{f}.
\end{align}
Obviously, $N(n,K)(\eta)$ and $M(n,K)(\eta)$ share the same structure with $N(T-t,K)(\eta)$ and $M(T-t,K)(\eta)$ respectively, which means  both $N(n,K)(\eta)$ and $M(n,K)(\eta)$ satisfies \eqref{eq:3.5}~.

To calculate $N(n,K)(\eta)$ and $M(n,K)(\eta)$ more concretely, we consider a special case, where the system parameter $K=0$.  For ease of notations, introduce the following symbols
\begin{align*}
	\left(\begin{array}{cccccccccc} 
		a_{i,1}\\
		a_{i,2}\\
		a_{i,3}\\
		a_{i,4}\\
		a_{i,5}\\
		a_{i,6}\\
		a_{i,7}\\
		a_{i,8}\end{array}\right ) =\left(\begin{array}{cccccccccc} 
		a_{i}(S_{1},S_{1},\eta)\\
		a_{i}(S_{2},S_{1},\eta)\\
		a_{i}(S_{3},S_{1},\eta)\\
		a_{i}(S_{4},S_{1},\eta)\\
		a_{i}(S_{1},S_{2},\eta)\\
		a_{i}(S_{2},S_{2},\eta)\\
		a_{i}(S_{3},S_{2},\eta)\\
		a_{i}(S_{4},S_{2},\eta)\end{array}\right ),
	\left(\begin{array}{cccccccccc} 
		p(S_{1},S_{1})p(S_{1},S_{1})\\
		p(S_{1},S_{2})p(S_{1},S_{1})\\
		p(S_{1},S_{3})p(S_{1},S_{1})\\
		p(S_{1},S_{4})p(S_{1},S_{1})\\
		p(S_{2},S_{1})p(S_{1},S_{2})\\
		p(S_{2},S_{2})p(S_{1},S_{2})\\
		p(S_{2},S_{3})p(S_{1},S_{2})\\
		p(S_{2},S_{4})p(S_{1},S_{2})\\
	\end{array}\right )=
	\left(\begin{array}{cccccccccc} 
		p_{1}\\
		p_{2}\\
		p_{3}\\
		p_{4}\\
		p_{5}\\
		p_{6}\\
		p_{7}\\
		p_{8}\end{array}\right ) 
	=\left(\begin{array}{cccccccccc} 
		\Phi^{4}(1-\gamma)\\
		\Phi^{3}(1-\gamma)\Phi(-(1-\gamma))\\
		\Phi^{3}(1-\gamma)\Phi(-(1-\gamma))\\
		\Phi^{2}(1-\gamma)\Phi^{2}(-(1-\gamma))\\
		\Phi^{2}(1-\gamma)\Phi^{2}(-(1-\gamma))\\
		\Phi^{3}(1-\gamma)\Phi(-(1-\gamma))\\
		\Phi(1-\gamma)\Phi^{3}(-(1-\gamma))\\
		\Phi^{2}(1-\gamma)\Phi^{2}(-(1-\gamma))\\
	\end{array}\right ),
\end{align*}
and
\begin{align*}
	\left(\begin{array}{cccccccccc} 
		a_{i,9}\\
		a_{i,10}\\
		a_{i,11}\\
		a_{i,12}\\
		a_{i,13}\\
		a_{i,14}\\
		a_{i,15}\\
		a_{i,16}\end{array}\right ) =\left(\begin{array}{cccccccccc} 
		a_{i}(S_{1},S_{3},\eta)\\
		a_{i}(S_{2},S_{3},\eta)\\
		a_{i}(S_{3},S_{3},\eta)\\
		a_{i}(S_{4},S_{3},\eta)\\
		a_{i}(S_{1},S_{4},\eta)\\
		a_{i}(S_{2},S_{4},\eta)\\
		a_{i}(S_{3},S_{4},\eta)\\
		a_{i}(S_{4},S_{4},\eta)\end{array}\right ),
	\left(\begin{array}{cccccccccc} 
		p(S_{3},S_{1})p(S_{1},S_{3})\\
		p(S_{3},S_{2})p(S_{1},S_{3})\\
		p(S_{3},S_{3})p(S_{1},S_{3})\\
		p(S_{3},S_{4})p(S_{1},S_{3})\\
		p(S_{4},S_{1})p(S_{1},S_{4})\\
		p(S_{4},S_{2})p(S_{1},S_{4})\\
		p(S_{4},S_{3})p(S_{1},S_{4})\\
		p(S_{4},S_{4})p(S_{1},S_{4})\end{array}\right )=
	\left(\begin{array}{cccccccccc} 
		p_{9}\\
		p_{10}\\
		p_{11}\\
		p_{12}\\
		p_{13}\\
		p_{14}\\
		p_{15}\\
		p_{16}\end{array}\right ) 
	=\left(\begin{array}{cccccccccc} 
		\Phi^{2}(1-\gamma)\Phi^{2}(-(1-\gamma))\\
		\Phi(1-\gamma)\Phi^{3}(-(1-\gamma))\\
		\Phi^{3}(1-\gamma)\Phi(-(1-\gamma))\\
		\Phi^{2}(1-\gamma)\Phi^{2}(-(1-\gamma))\\
		\Phi^{4}(-(1-\gamma))\\
		\Phi(1-\gamma)\Phi^{3}(-(1-\gamma))\\
		\Phi(1-\gamma)\Phi^{3}(-(1-\gamma))\\
		\Phi^{2}(1-\gamma)\Phi^{2}(-(1-\gamma))\end{array}\right ),
\end{align*}
Recall that the definition in~\eqref{eq:3.2.1}~, we define 
\begin{align}\label{eq:4.3}
	\left(\begin{array}{cccccccccc} 
		b_{i,1}(n)\\
		b_{i,2}(n)\\
		b_{i,3}(n)\\
		b_{i,4}(n)\end{array}\right ) =\left(\begin{array}{cccccccccc} 
		b_{i}(S_{1})(n)\\
		b_{i}(S_{2})(n)\\
		b_{i}(S_{3})(n)\\
		b_{i}(S_{4})(n)\end{array}\right ),
	\tilde{p}=\left(\begin{array}{cccccccccc} 
		\tilde{p}_{1}\\
		\tilde{p}_{2}\\
		\tilde{p}_{3}\\
		\tilde{p}_{4}\end{array}\right )
	=\left(\begin{array}{cccccccccc} 
		\Phi^{2}(1-\gamma)\\
		\Phi(1-\gamma)\Phi(-(1-\gamma))\\
		\Phi(1-\gamma)\Phi(-(1-\gamma))\\
		\Phi^{2}(-(1-\gamma))\end{array}\right ).
\end{align}
\begin{lemma}\label{lem:4.1} 
	Based on the symbols introduced above, define
	\begin{align*}
		P_{1}&=diag\{p(S_{1},S_{1}),p(S_{1},S_{2}),p(S_{1},S_{3}),p(S_{1},S_{4})\},\\	P_{2}&=diag\{p(S_{2},S_{1}),p(S_{2},S_{2}),p(S_{2},S_{3}),p(S_{2},S_{4})\},\\
		P_{3}&=diag\{p(S_{3},S_{1}),p(S_{3},S_{2}),p(S_{3},S_{3}),p(S_{3},S_{4})\},\\	
		P_{4}&=diag\{p(S_{4},S_{1}),p(S_{4},S_{2}),p(S_{4},S_{3}),p(S_{4},S_{4})\}.
	\end{align*}
	then
	\begin{align*}
		P_{i}&=P_{1i}P_{jk}P_{1}P_{jk}P_{1i}\\
		&=P_{1i}P_{jk}\cdot diag\{p(S_{1},S_{1}),p(S_{1},S_{2}),p(S_{1},S_{3}),p(S_{1},S_{4})\} \cdot P_{jk}P_{1i},i,j,k=2,3,4,i\neq j\neq k.
	\end{align*}
	where $P_{ij}$ is the matrix which exchanges the $i$-th row and $j$-th column of identity matrix. Furthermore,  $diag\{p_{1},p_{2},\cdots,p_{16}\}=diag\{\tilde{p}_{1}P_{1},\tilde{p}_{2}P_{2},\tilde{p}_{3}P_{3},\tilde{p}_{4}P_{4}\}:= diag\{\tilde{P_{1}},\tilde{P_{2}},\tilde{P_{3}},\tilde{P_{4}}\}.$
\end{lemma}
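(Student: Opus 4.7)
The plan is to exploit the decoupling that occurs when $K=0$: the update rule becomes $X(x_i,n+1)=\mathrm{sgn}\{(1-\gamma)X(x_i,n)+\xi(x_i,n)\}$, so the new values at $x_1$ and $x_2$ are conditionally independent given the current configuration, and every two-site transition probability $p(S_i,S_j)$ factors as a product of two single-site transition probabilities. First I would record the four single-site probabilities: regardless of the starting sign one keeps it with probability $\Phi(1-\gamma)$ and flips with probability $\Phi(-(1-\gamma))$. Multiplying through all four two-site pairs, the row of transition probabilities out of $S_1=(+1,+1)$ is exactly $(\tilde p_1,\tilde p_2,\tilde p_3,\tilde p_4)$, so $P_1=\mathrm{diag}\{\tilde p_1,\tilde p_2,\tilde p_3,\tilde p_4\}$ follows directly from the definition of $\tilde p$.

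The central observation is that $S_2,S_3,S_4$ are obtained from $S_1$ by flipping the first coordinate, the second coordinate, or both respectively. A coordinate flip induces an involution on $\{S_1,S_2,S_3,S_4\}$, and because of the symmetry of the single-site probabilities noted above, the diagonal entries of $P_i$ are just those of $P_1$ re-indexed by that involution. A direct calculation gives
\begin{align*}
P_2&=\mathrm{diag}\{\tilde p_2,\tilde p_1,\tilde p_4,\tilde p_3\},\\
P_3&=\mathrm{diag}\{\tilde p_3,\tilde p_4,\tilde p_1,\tilde p_2\},\\
P_4&=\mathrm{diag}\{\tilde p_4,\tilde p_3,\tilde p_2,\tilde p_1\},
\end{align*}
so for each $i\in\{2,3,4\}$ the permutation carrying $P_1$ to $P_i$ is the product of two disjoint transpositions $(1\,i)(j\,k)$ with $\{j,k\}=\{2,3,4\}\setminus\{i\}$.

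From here the asserted identity $P_i=P_{1i}P_{jk}P_1P_{jk}P_{1i}$ is immediate: each $P_{ab}$ is symmetric and involutive, so conjugation $P_{ab}DP_{ab}$ of a diagonal matrix $D$ swaps its $a$-th and $b$-th entries, while $P_{1i}$ and $P_{jk}$ commute because they act on disjoint index pairs; hence the double conjugation realises exactly the product of the two disjoint transpositions above. For the block-diagonal decomposition $\mathrm{diag}\{p_1,\ldots,p_{16}\}=\mathrm{diag}\{\tilde p_1 P_1,\tilde p_2 P_2,\tilde p_3 P_3,\tilde p_4 P_4\}$, by the way the list of $p_m$ is defined, the $\ell$-th block of four consecutive entries is $\{p(S_\ell,S_j)\,p(S_1,S_\ell)\}_{j=1}^4$; the common factor $p(S_1,S_\ell)=\tilde p_\ell$ pulls out, and the remaining diagonal of $p(S_\ell,S_j)$ in $j$ is precisely $P_\ell$ by definition. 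There is no genuine obstacle here — the lemma is a bookkeeping identity, and the only care required is in matching the two transpositions to each $i$, which simply reflects the fact that independent flips of the two coordinates act independently on the pairings $\{1,i\}$ and $\{j,k\}$.
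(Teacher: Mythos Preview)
Your proof is correct and follows the same approach as the paper, which simply states that ``this conclusion can be checked straightforward, the proof will be omitted.'' You carry out precisely the direct verification the paper alludes to: computing the four single-site transition probabilities at $K=0$, listing the diagonals $P_2,P_3,P_4$ explicitly, and identifying the permutation $(1i)(jk)$ that relates each to $P_1$; the block decomposition of $\mathrm{diag}\{p_1,\dots,p_{16}\}$ then follows from the factorisation $p_m=p(S_1,S_\ell)\,p(S_\ell,S_j)$ built into the definition. Your observation that the permutations arise from independent sign flips on the two coordinates is a pleasant conceptual explanation that the paper does not make explicit, but the underlying argument is the same straightforward check.
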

\begin{proof}
	This conclusion can be checked straightforward, the proof will be omitted.
\end{proof}
\begin{lemma}\label{lem:4.2}
	According to~\eqref{eq:4.3}~, define
	\begin{align*}
		B(n):=\left(\begin{array}{cccccccccc} 
			b_{1,1}(n) & b_{1,2}(n)& b_{1,3}(n) & b_{1,4}(n)\\
			b_{2,1}(n)& b_{2,2}(n)& b_{2,3}(n) & b_{2,4}(n)\\
			b_{3,1}(n) & b_{3,2}(n)& b_{3,3}(n)& b_{3,4}(n)\\
			b_{4,1}(n) &b_{4,2}(n)& b_{4,3}(n) & b_{4,4}(n)\end{array}\right ).
	\end{align*}
	Then for any positive integer $n$, B(n) are always invertible.
\end{lemma}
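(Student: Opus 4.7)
Since Lemma 4.2 is stated in the $K=0$ setting introduced in the preceding paragraph, the chain~\eqref{eq:1.3}~reduces to $X(x_i,n+1)=\mathrm{sgn}\{(1-\gamma)X(x_i,n)+\xi(x_i,n)\}$, so updates at different sites are independent. My plan is to exploit this factorization and reduce invertibility of $B(n)$ to a $2\times 2$ computation.

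Let $\mathbf{T}$ denote the one-step transition matrix of the pair $(\phi(x_1,\cdot),\phi(x_2,\cdot))$ on the ordered states $(S_1,S_2,S_3,S_4)$, and let $\mathbf{T}_0$ be the single-site one-step matrix
$$\mathbf{T}_0=\begin{pmatrix}\Phi(1-\gamma) & \Phi(-(1-\gamma))\\ \Phi(-(1-\gamma)) & \Phi(1-\gamma)\end{pmatrix}.$$
With the ordering $S_1=(+1,+1),\ S_2=(-1,+1),\ S_3=(+1,-1),\ S_4=(-1,-1)$, independence of the two site updates gives $\mathbf{T}=\mathbf{T}_0\otimes \mathbf{T}_0$; this is consistent with the fact that the vector $\tilde p$ in~\eqref{eq:4.3}~is exactly the first row of $\mathbf{T}$. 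By the very definition $b_{i,j}(n)=b_i(S_j)(n)=p(n,S_j,S_i)$, one then has $B(n)=(\mathbf{T}^n)^T$, so it suffices to prove $\det \mathbf{T}\neq 0$.

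A direct calculation together with the Kronecker determinant identity gives
$$\det\mathbf{T}=\det(\mathbf{T}_0\otimes \mathbf{T}_0)=(\det \mathbf{T}_0)^{4}=\bigl(2\Phi(1-\gamma)-1\bigr)^{4}.$$
The offset $\gamma$ was introduced in Section~\ref{sec:2} so as to make the Hamiltonian positive definite; in particular $\gamma\neq 1$, hence $1-\gamma\neq 0$, and by strict monotonicity of $\Phi$ one has $\Phi(1-\gamma)\neq 1/2$. Therefore $\det\mathbf{T}\neq 0$, and consequently $\det B(n)=(\det\mathbf{T})^{n}\neq 0$ for every positive integer $n$.

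The only step requiring real care is the bookkeeping for the Kronecker structure: one must verify that, in the chosen ordering of the four pair-states, the $16$ joint probabilities $p_{1},\ldots,p_{16}$ recorded above factor coordinate-wise and therefore coincide entry-by-entry with $\mathbf{T}_0\otimes\mathbf{T}_0$. I do not expect any further technical obstacle; once this structural identification is made, invertibility of every $B(n)$ follows from the one-line determinant computation above.
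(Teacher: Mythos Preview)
Your argument is correct. Both your proof and the paper's rest on the same observation---that at $K=0$ the two sites evolve independently, so the joint two-site $n$-step transition matrix factors through the single-site matrix $\mathbf{T}_0$---but you package this differently. The paper writes out $B(n)$ explicitly by introducing the single-site probabilities $y_1(n),\dots,y_4(n)$ (computed in Lemma~\ref{lem:4.3}), sets $c(n)=y_2(n)/y_1(n)$, displays $B(n)=y_1^2(n)$ times a concrete $4\times 4$ matrix in $c(n)$, and then checks by hand that this matrix is nonsingular whenever $c(n)\neq 1$. You instead recognise $B(n)=(\mathbf{T}^n)^T$ with $\mathbf{T}$ a (permutation conjugate of) $\mathbf{T}_0\otimes\mathbf{T}_0$, and reduce everything to $\det\mathbf{T}_0=2\Phi(1-\gamma)-1\neq 0$. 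Your route is shorter and avoids invoking Lemma~\ref{lem:4.3}; the paper's route, however, produces the explicit parametrisation of $B(n)$ in terms of $c(n)$, which is reused immediately afterwards (e.g.\ in the computation of $Q_1B(n)^{-1}Q_1^{-1}$ in Lemma~\ref{lem:4.4} and again in Lemma~\ref{lem:4.9}). One small remark: with the paper's ordering $S_1=(+,+),\,S_2=(-,+),\,S_3=(+,-),\,S_4=(-,-)$ the identification with $\mathbf{T}_0\otimes\mathbf{T}_0$ holds only up to the permutation swapping the second and third basis vectors, but this is a similarity and leaves the determinant unchanged, so your conclusion is unaffected.
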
	
\begin{proof}
	In the case of $K=0$, noticed that the Gaussian distribution at point $x_{1}$ and $x_{2}$ are independent, then for $s_{i}\in \{\pm 1\},i=1,2,3,4$, 
	\begin{align*}
		p(n,(x_{1}=s_{1},x_{2}=s_{2}),(x_{1}=s_{3},x_{2}=s_{4}))=&p(n,x_{1}=s_{1},x_{1}=s_{3})\cdot p(n,x_{2}=s_{2},x_{2}=s_{4}),
	\end{align*}
	and
	$$p(n,x_{1}=s_{i},x_{1}=s_{j})= p(n,x_{2}=s_{i},x_{2}=s_{j}).$$
	Denote
	\begin{align}\label{eq:5.6}
		\left\{\begin{array}{cccccccccc}
			y_{1}(n)=p(n,x_{1}=+1,x_{1}=+1)= p(n,x_{2}=+1,x_{2}=+1),\\
			y_{2}(n)=p(n,x_{1}=-1,x_{1}=+1)= p(n,x_{2}=-1,x_{2}=+1),\\
			y_{3}(n)=p(n,x_{1}=+1,x_{1}=-1)= p(n,x_{2}=+1,x_{2}=-1),\\
			y_{4}(n)=p(n,x_{1}=-1,x_{1}=-1)= p(n,x_{2}=-1,x_{2}=-1).\end{array}\right.
	\end{align}
	Obviously, $y_{1}+y_{3}=1$ and $y_{2}+y_{4}=1$. According to Lemma \ref{lem:4.3} below, we know that $y_{1}=y_{4}$ and $y_{2}=y_{3}$, there is a constant $c=c(n)>0$ but $c(n)\neq1$ such that  $y_{2}=c(n)\cdot y_{1}$. Hence
	\begin{align*}
		B(n)
		=\left(\begin{array}{cccccccccc} 
			y^{2}_{1}& y_{1}y_{2}& y_{1}y_{2} & y^{2}_{2}\\
			y_{1}y_{3} & y_{1}y_{4}& y_{2}y_{3} & y_{2}y_{4}\\
			y_{1}y_{3} & y_{2}y_{3}& y_{1}y_{4}& y_{2}y_{4}\\
			y^{2}_{3} &y_{3}y_{4}& y_{3}y_{4} &y^{2}_{4}\end{array}\right )
		=y^{2}_{1}\left(\begin{array}{cccccccccc} 
			1& c& c & c^{2}\\
			c& 1&c^{2} & c\\
			c & c^{2}& 1& c\\
			c^{2} &c& c &1\end{array}\right).
	\end{align*} 
	It is easy to check that $B(n)$ is invertible if $c(n)\neq1$, which completes the proof of Lemma \ref{lem:4.2}. 
\end{proof}
\begin{lemma}\label{lem:4.3}
	Assume that $y_{1}(n),y_{2}(n),y_{3}(n),y_{4}(n)$ are the values introduced in Lemma \ref{lem:4.2}, then
	\begin{align*}
		\left\{\begin{array}{cccccccccc} 
			y_{1}(n)=(\Phi(1-\gamma)-\frac{1}{2})(\Phi(1-\gamma)-\Phi(\gamma-1))^{n-1}+\frac{1}{2},\\
			y_{2}(n)=\frac{1}{2}-(\Phi(1-\gamma)-\frac{1}{2})(\Phi(1-\gamma)-\Phi(\gamma-1))^{n-1},\\
			y_{3}(n)=\frac{1}{2}-(\Phi(1-\gamma)-\frac{1}{2})(\Phi(1-\gamma)-\Phi(\gamma-1))^{n-1},\\
			y_{4}(n)=\frac{1}{2}+(\frac{1}{2}-\Phi(\gamma-1))(\Phi(1-\gamma)-\Phi(\gamma-1))^{n-1}.\end{array}\right.
	\end{align*}
\end{lemma}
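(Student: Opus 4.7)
\medskip

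\noindent\textbf{Proof plan.} When $K=0$, the coupling term in the dynamic \eqref{eq:1.3} vanishes, so each coordinate evolves independently:
$X(x_i,n+1)=\mathrm{sgn}\{(1-\gamma)X(x_i,n)+\xi(x_i,n)\}$, with $\xi(x_i,n)$ an i.i.d.\ standard Gaussian. The first step is therefore to reduce the question to a single-site $\{+1,-1\}$-valued chain, and compute its one-step transition probabilities using that $\xi$ is $N(0,1)$:
$\mathbb{P}(+1\to+1)=\mathbb{P}(\xi>-(1-\gamma))=\Phi(1-\gamma)$, $\mathbb{P}(+1\to-1)=\Phi(\gamma-1)$, and similarly $\mathbb{P}(-1\to-1)=\Phi(1-\gamma)$, $\mathbb{P}(-1\to+1)=\Phi(\gamma-1)$, exploiting $\Phi(-x)=1-\Phi(x)$.

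The second step is to extract the $n$-step probabilities $y_i(n)$ from the symmetric $2\times 2$ transition matrix
\begin{equation*}
    P=\begin{pmatrix}\Phi(1-\gamma) & \Phi(\gamma-1)\\ \Phi(\gamma-1) & \Phi(1-\gamma)\end{pmatrix}.
\end{equation*}
Since $P$ has eigenvalues $1$ (with eigenvector $(1,1)^T$) and $\lambda:=\Phi(1-\gamma)-\Phi(\gamma-1)=2\Phi(1-\gamma)-1$ (with eigenvector $(1,-1)^T$), an immediate spectral decomposition gives $P^n=\tfrac12\bigl(\begin{smallmatrix}1&1\\1&1\end{smallmatrix}\bigr)+\tfrac{\lambda^n}{2}\bigl(\begin{smallmatrix}1&-1\\-1&1\end{smallmatrix}\bigr)$. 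Equivalently (and this is how I would present it cleanly), one sets up the linear recursion
\begin{equation*}
    y_1(n+1)=\Phi(1-\gamma)\,y_1(n)+\Phi(\gamma-1)\,(1-y_1(n))=\Phi(\gamma-1)+\lambda\,y_1(n),
\end{equation*}
using $y_1(n)+y_3(n)=1$. Defining $u_n=y_1(n)-\tfrac12$ and using $\Phi(1-\gamma)+\Phi(\gamma-1)=1$, the recursion collapses to $u_{n+1}=\lambda u_n$ with $u_1=\Phi(1-\gamma)-\tfrac12$, so $y_1(n)=\tfrac12+(\Phi(1-\gamma)-\tfrac12)\lambda^{n-1}$, which is the claimed identity.

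The third step is to derive the other three formulas. The identities $y_1+y_3=1$ and $y_2+y_4=1$ are immediate from stochasticity, giving the expressions for $y_3(n)$ and $y_4(n)$; an analogous recursion for $y_2(n)$, or the spin-flip symmetry $\Phi(1-\gamma)\leftrightarrow\Phi(\gamma-1)$ exchanging the roles of $+1$ and $-1$, yields $y_2(n)=\tfrac12-(\Phi(1-\gamma)-\tfrac12)\lambda^{n-1}$, matching the statement after simplifying $\tfrac12-\Phi(\gamma-1)=\Phi(1-\gamma)-\tfrac12$ again via $\Phi(-x)=1-\Phi(x)$. This last symmetry identification is really the only point where one must be careful, but it is purely algebraic; there is no genuine obstacle in this lemma. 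Once the one-site formulas are in hand, the factorization $p(n,(s_1,s_2)\to(s_3,s_4))=p(n,s_1\to s_3)\,p(n,s_2\to s_4)$ invoked in Lemma~\ref{lem:4.2} immediately supplies the entries of $B(n)$ needed downstream.
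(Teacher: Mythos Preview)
Your proposal is correct and follows essentially the same route as the paper: both reduce to a single-site $\{+1,-1\}$ chain, identify the one-step probabilities $\Phi(1-\gamma)$ and $\Phi(\gamma-1)$, set up the linear recursion $y_1(n)=\Phi(1-\gamma)y_1(n-1)+\Phi(\gamma-1)(1-y_1(n-1))$ via Chapman--Kolmogorov and $y_1+y_3=1$, and solve it with the initial value $y_1(1)=\Phi(1-\gamma)$; the remaining $y_i$ then follow by the same recursion from the other starting state or by the complementarity relations. Your aside on the spectral decomposition of the $2\times2$ transition matrix is a nice alternative viewpoint that the paper does not spell out, but your main presentation via the recursion matches the paper's proof.
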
	
\begin{proof}
	For any positive integer $n$, denote that $E_{n}=p(n,x_{1}=+1,x_{1}=+1)$ and $F_{n}=p(n,x_{1}=+1,x_{1}=-1)$. Then $E_{n}+F_{n}=1$, and satisfies the following equation
	\begin{align*}
		E_{n}&=p(n,x_{1}=+1,x_{1}=+1)=p(n-1,x_{1}=+1,x_{1}=+1)p(1,x_{1}=+1,x_{1}=+1)\\
		&+p(n-1,x_{1}=+1,x_{1}=-1)p(1,x_{1}=-1,x_{1}=+1)\\
		&=E_{n-1}\Phi(1-\gamma)+F_{n-1}\Phi(\gamma-1)=E_{n-1}\Phi(1-\gamma)+(1-E_{n-1})\Phi(\gamma-1)\\
		&=E_{n-1}(\Phi(1-\gamma)-\Phi(\gamma-1))+\Phi(\gamma-1).
	\end{align*}
	we know that 
	\begin{align*}
		E_{n}&=E_{1}(\Phi(1-\gamma)-\Phi(\gamma-1))^{n-1}+\frac{1-(\Phi(1-\gamma)-\Phi(\gamma-1))^{n-1}}{2}.
	\end{align*}
	Noticed that  $E_{1}=p(1,x_{1}=+1,x_{1}=+1)=\Phi(1-\gamma)$, then
	\begin{align*}
		E_{n}&=(\Phi(1-\gamma)-\frac{1}{2})(\Phi(1-\gamma)-\Phi(\gamma-1))^{n-1}+\frac{1}{2}.
	\end{align*}
	Similarly, if $E_{n}=p(n,x_{1}=-1,x_{1}=+1)$ and $F_{n}=p(n,x_{1}=-1,x_{1}=-1)$, there admits the equation  
	\begin{align*}
		E_{n}&=p(n,x_{1}=-1,x_{1}=+1)=p(n-1,x_{1}=-1,x_{1}=+1)p(1,x_{1}=+1,x_{1}=+1)\\
		&+p(n-1,x_{1}=-1,x_{1}=-1)p(1,x_{1}=-1,x_{1}=+1)\\
		&=E_{n-1}\Phi(1-\gamma)+F_{n-1}\Phi(\gamma-1)=E_{n-1}\Phi(1-\gamma)+(1-E_{n-1})\Phi(\gamma-1)\\
		&=E_{n-1}(\Phi(1-\gamma)-\Phi(\gamma-1))+\Phi(\gamma-1),
	\end{align*}
	Hence
	\begin{align*}
		E_{n}&=E_{1}(\Phi(1-\gamma)-\Phi(\gamma-1))^{n-1}+\frac{1-(\Phi(1-\gamma)-\Phi(\gamma-1))^{n-1}}{2}.
	\end{align*}
	Owing to $E_{1}=p(1,x_{1}=-1,x_{1}=+1)=\Phi(\gamma-1)$, then
	\begin{align*}
		E_{n}&=\frac{1}{2}-(\frac{1}{2}-\Phi(\gamma-1))(\Phi(1-\gamma)-\Phi(\gamma-1))^{n-1}\\
		&=\frac{1}{2}-(\Phi(1-\gamma)-\frac{1}{2})(\Phi(1-\gamma)-\Phi(\gamma-1))^{n-1}.
	\end{align*}
	The proof is completed.
\end{proof}
Next, we can define the most important matrices in our calculation for general system parameter $K$. 
\begin{lemma}\label{lem:4.4}
	Consider the system with arbitrary parameter $K$ and initial data $\eta$, for any positive integer $n$, define
	\begin{align}\label{eq:4.5}
		M_{1}^{*}(n,K)(\eta)&=Q_{1}B^{-1}(n)M(n,K)(\eta)(Q_{1}B^{-1}(n))^{T},\nonumber\\
		N_{1}^{*}(n,K)(\eta)&=Q_{1}B^{-1}(n)N(n,K)(\eta)(Q_{1}B^{-1}(n))^{T}.
	\end{align}
	In case that does not give rise to misunderstandings, the initial data $\eta$ will be omitted usually. Then we can get
	\begin{align*}
		M_{1}^{*}(n,K)&=\left(\begin{array}{cccccccccc} 
			0&0&0&0\\
			0&\\
			0&&M_{rem,1}^{*}(n,K)\\
			0&\end{array}\right ),\\
		N_{1}^{*}(n,K)&=\left(\begin{array}{cccccccccc} 
			0&0&0&0\\
			0&\\
			0&&N_{rem,1}^{*}(n,K)\\
			0&\end{array}\right ).
	\end{align*}
	Furthermore, define 
	\begin{align*}
		M_{i}^{*}(n,K)(\eta)&=Q_{i}B^{-1}(n)M(n,K)(\eta)(Q_{i}B^{-1}(n))^{T},\\
		N_{i}^{*}(n,K)(\eta)&=Q_{i}B^{-1}(n)N(n,K)(\eta)(Q_{i}B^{-1}(n))^{T},i=1,2,3,4.
	\end{align*}
	Then for any initial data $\eta$, the elements in the $i$-th row and $i$-th column of  $M_{i}^{*}(n,K)$ and $N_{i}^{*}(n,K)$ are zeros, where
	\begin{align*}
		Q_{2}=\left(\begin{array}{cccccccccccccccc} 
			1&0&0& 0\\
			1&1&1&1\\
			0&0&1&0 \\
			0&0&0&1
		\end{array}\right ),	
		Q_{3}=\left(\begin{array}{cccccccccccccccc}  
			1&0&0& 0\\
			0&1&0&0 \\
			1&1&1&1\\
			0&0&0&1
		\end{array}\right ), 
		Q_{4}=\left(\begin{array}{cccccccccccccccc} 
			1&0&0&0\\
			0&1&0& 0\\
			0&0&1&0 \\
			1&1&1&1
		\end{array}\right ) .
	\end{align*}
\end{lemma}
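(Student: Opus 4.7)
The plan is to exploit two algebraic observations. First, the row vector $\mathbf{1}^T = (1,1,1,1)$ annihilates both $M(n,K)$ and $N(n,K)$ from the left. Second, the $i$-th row of $Q_i B^{-1}(n)$ is exactly $\mathbf{1}^T$. Combining these two facts with the symmetry of $M$ and $N$ immediately yields the claimed block structure.

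For the first observation, the key is that transition probabilities sum to one, so $\sum_k b_k(\cdot) = 1$ identically on $E_2$. For $N$, the column-sum identity $\sum_k (b_k(\omega) - b_k(\eta)) = 0$ gives $\mathbf{1}^T N(n,K) = 0$ directly from the defining integral in the representation used in Section 3.2. For $M = I^1 - 2 I^2 + 2 I^3$, the telescoping identity $\sum_k a_k(z,\omega,\eta) = \sum_k b_k(z) - 2\sum_k b_k(\omega) + \sum_k b_k(\eta) = 1 - 2 + 1 = 0$ shows that $\mathbf{1}^T I^1 = 0$; a short computation of $\mathbf{1}^T I^2$ and $\mathbf{1}^T I^3$ shows that both row vectors coincide, their $j$-th entry being $\int b_j(\omega) p(\eta, d\omega)$, so the contributions cancel in $-2 I^2 + 2 I^3$ and we conclude $\mathbf{1}^T M(n,K) = 0$.

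For the second observation, the columns of $B(n)$ are the probability vectors $\mathbb{B}(S_j)(n)$, each of which sums to $1$; hence $\mathbf{1}^T B(n) = \mathbf{1}^T$. Lemma 4.2 guarantees that $B(n)$ is invertible, so multiplying on the right by $B^{-1}(n)$ yields $\mathbf{1}^T B^{-1}(n) = \mathbf{1}^T$. Reading the explicit forms of $Q_1, Q_2, Q_3, Q_4$ off the statement shows that the $i$-th row of $Q_i$ is precisely $(1,1,1,1)$, so the $i$-th row of $Q_i B^{-1}(n)$ equals $\mathbf{1}^T B^{-1}(n) = \mathbf{1}^T$.

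Putting the two observations together, the $i$-th row of $Q_i B^{-1}(n) M(n,K)$ equals $\mathbf{1}^T M(n,K) = 0$, and likewise the $i$-th row of $Q_i B^{-1}(n) N(n,K)$ vanishes. Since $M(n,K)$ and $N(n,K)$ are symmetric (being the quadratic-form representers of the symmetric bilinear operators $\Gamma_2(F,F)$ and $\Gamma(F,F)$), the conjugated matrices $M_i^*(n,K)$ and $N_i^*(n,K)$ are themselves symmetric, so the vanishing of the $i$-th row automatically forces the $i$-th column to vanish as well. For $i = 1$ this produces exactly the displayed block form with $M_{rem,1}^*(n,K)$ and $N_{rem,1}^*(n,K)$; for $i = 2, 3, 4$ it gives the claimed extension. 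I do not anticipate any substantive obstacle beyond careful bookkeeping of which row of $Q_i$ houses the all-ones vector; the only nontrivial ingredient, the invertibility of $B(n)$, is supplied in advance by Lemma 4.2.
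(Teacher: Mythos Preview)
Your argument is correct and in fact cleaner than the paper's. The paper establishes the case $i=1$ by writing
\[
M_{1}^{*}(n,K)=\bigl(Q_{1}B(n)^{-1}Q_{1}^{-1}\bigr)\,\bigl(Q_{1}M(n,K)Q_{1}^{T}\bigr)\,\bigl(Q_{1}B(n)^{-1}Q_{1}^{-1}\bigr)^{T},
\]
invoking the block form~\eqref{eq:3.5} for the middle factor, and then computing $Q_{1}B(n)^{-1}Q_{1}^{-1}$ explicitly from the closed form of $B(n)$ in Lemma~\ref{lem:4.2} to check that the zero first row and column survive the conjugation. The cases $i=2,3,4$ are then reduced to $i=1$ via the relation $M_{i}^{*}=Q_{i}Q_{1}^{-1}M_{1}^{*}(Q_{i}Q_{1}^{-1})^{T}$ together with the concrete form of $Q_{i}Q_{1}^{-1}$. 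Your route bypasses all of this explicit matrix algebra: the single identity $\mathbf{1}^{T}B(n)=\mathbf{1}^{T}$ (column-stochasticity) immediately gives $\mathbf{1}^{T}B(n)^{-1}=\mathbf{1}^{T}$, and together with $\mathbf{1}^{T}M(n,K)=\mathbf{1}^{T}N(n,K)=0$ this handles all four values of $i$ uniformly, without ever needing the explicit entries of $B(n)$ or of $Q_{1}B(n)^{-1}Q_{1}^{-1}$. The paper's approach does have the side benefit of recording the exact form~\eqref{eq:4.6} of $Q_{1}B(n)^{-1}Q_{1}^{-1}$, but that formula is not used elsewhere, so nothing is lost by your shortcut.
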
	
\begin{proof}
	According to equation ~\eqref{eq:3.5}~ in section  \ref{sec:3.2}, for any $n\in \mathbb{N}^{+}$ and parameter $K$ 	
	\begin{align*}
		Q_{1}M(n,K)Q_{1}^{T}&=\left(\begin{array}{cccccccccc} 
			0&0&0&0\\
			0&\\
			0&&M_{rem,1}(n,K)\\
			0&\end{array}\right ),\\
		Q_{1}N(n,K)Q_{1}^{T}
		&=\left(\begin{array}{cccccccccc} 
			0&0&0&0\\
			0&\\
			0&&N_{rem,1}(n,K)\\
			0&\end{array}\right ).
	\end{align*}
	Due to Lemma \ref{lem:4.2}
	\begin{align*}
		B(n)=y^{2}_{1}(n)\left(\begin{array}{cccccccccc} 
			1& c(n)& c(n) & c^{2}(n)\\
			c(n)& 1&c^{2}(n) & c(n)\\
			c(n) & c^{2}(n)& 1& c(n)\\
			c^{2}(n) &c(n)& c(n) &1\end{array}\right ).
	\end{align*} 
	Hence
	\begin{align*}
		Q_{1}B(n)Q_{1}^{-1}=y^{2}_{1}(n)\left(\begin{array}{cccccccccc} 
			(1+c(n))^{2}&0&0&0\\
			c(n)& 1-c(n)&c^{2}(n)-c(n) & 0\\
			c(n) & c^{2}(n)-c(n)& 1-c(n)& 0\\
			c^{2}(n) &c(n)-c^{2}(n)& c(n)-c^{2}(n) &1-c^{2}(n)\end{array}\right ).
	\end{align*}
	So
	\begin{align}\label{eq:4.6}
		Q_{1}B(n)^{-1}Q_{1}^{-1}&=\frac{1}{y^{2}_{1}(n)}\left(\begin{array}{cccccccccc} 
			(1+c(n))^{2}&0&0&0\\
			c(n)& 1-c(n)&c^{2}(n)-c(n) & 0\\
			c(n) & c^{2}(n)-c(n)& 1-c(n)& 0\\
			c^{2}(n) &c(n)-c^{2}(n)& c(n)-c^{2}(n) &1-c^{2}(n)\end{array}\right )^{-1}\nonumber\\
		&=\frac{1}{y^{2}_{1}(n)}\left(\begin{array}{cccccccccc} 
			\frac{1}{(1+c(n))^{2}}&0&0&0\\
			\frac{-c(n)}{(1+c(n))^{2}(1-c(n))^{2}}& \frac{1}{(1+c(n))(1-c(n))^{2}}&\frac{c(n)}{(1+c(n))(1-c(n))^{2}} & 0\\
			\frac{-c(n)}{(1+c(n))^{2}(1-c(n))^{2}} & \frac{c(n)}{(1+c(n))(1-c(n))^{2}}& \frac{1}{(1+c(n))(1-c(n))^{2}}& 0\\
			\frac{c^{2}(n)}{(1+c(n))^{2}(1-c(n))^{2}} &\frac{-c(n)}{(1+c(n))(1-c(n))^{2}}&\frac{-c(n)}{(1+c(n))(1-c(n))^{2}} &\frac{1}{1-c^{2}(n)}\end{array}\right ).
	\end{align}
	With some long but straightforward algebra, we find 
	\begin{align*}
		M_{1}^{*}(n,K)&=Q_{1}B(n)^{-1}M(n,K)(Q_{1}B(n)^{-1})^{T}\\
		&=Q_{1}B(n)^{-1}Q_{1}^{-1}Q_{1}M(n,K)Q_{1}^{T}(Q_{1}B(n)^{-1}Q_{1}^{-1})^{T}=\left(\begin{array}{cccccccccc} 
			0&0&0&0\\
			0&\\
			0&&M_{rem,1}^{*}(n,K)\\
			0&\end{array}\right ),\\
		N_{1}^{*}(n,K)&=Q_{1}B(n)^{-1}N(n,K)(Q_{1}G^{-1})^{T}\\
		&=Q_{1}B(n)^{-1}Q_{1}^{-1}Q_{1}N(n,K)Q_{1}^{T}(Q_{1}B(n)^{-1}Q_{1}^{-1})^{T}=\left(\begin{array}{cccccccccc} 
			0&0&0&0\\
			0&\\
			0&&N_{rem,1}^{*}(n,K)\\
			0&\end{array}\right ).
	\end{align*}
	Furthermore 
	\begin{align*}
		M_{i}^{*}(n,K)&=Q_{i}B(n)^{-1}M(n,K)(Q_{i}B(n)^{-1})^{T}\\
		&=Q_{i}Q^{-1}_{1}Q_{1}B(n)^{-1}M(n,K)(B(n)^{-1})^{T}Q^{T}_{1}(Q^{-1}_{1})^{T}Q_{i}^{T}\\
		&=Q_{i}Q^{-1}_{1}\left(\begin{array}{cccccccccc}
			0&0&0&0\\
			0&\\
			0&&M_{rem,1}^{*}(n,K)\\
			0&\end{array}\right )(Q^{-1}_{1})^{T}Q_{i}^{T}.
	\end{align*}
	Noticed that
	\begin{align*}
		Q_{2}Q^{-1}_{1}=\left(\begin{array}{cccccccccc} 
			1&-1&-1&-1\\
			1&0&0&0\\
			0&0&1&0\\
			0&0&0&1\end{array}\right ),
	\end{align*}
	then the second row and the second column of matrix $M_{2}^{*}(n,K)$ are 0. It is easy to see, the same results can be checked for the cases $i=3,4.$ 
\end{proof} 
In the rest of this subsection, we will focus on the Bakry-\'Emery curvature in the case $K=0$. Without loss of generality, choosing the initial data to be $\eta=S_{1}$ firstly. If it does not cause ambiguity, for square field operator $\Gamma$, the $4\times 4$ matrix $N(n,K=0)(\eta=S_{1})$ which determines the expression of $\Gamma(F,F)(\eta=S_{1})$ will be recorded as $N(n,K=0)$. The time variable $T-t$ used in equation~\eqref{eq:3.2.1}~ is substituted by $n$ and will be omitted if it is not necessary. The $(i,j)$ element of $N(n,K=0)$ is
\begin{align*}
	N(n,K=0)_{ij}&=\int_{\omega\in \{S_{1},S_{2},S_{3},S_{4}\}}
	(b_{i}(\omega)-b_{i}(S_{1}))
	(b_{j}(\omega)-b_{j}(S_{1})))p(S_{1},\omega )\\
	&=\sum_{k=1}^{4}(b_{i,k}-b_{i,1})(b_{j,k}-b_{j,1})\tilde{p}_{k},
\end{align*}
To be convenient, let
\begin{align*}
	D_{1}=
	\left(\begin{array}{cccccccccc} 
		0 & -1& -1 &-1\\
		0 & 1& 0 &0\\
		0 & 0& 1 & 0\\
		0 & 0& 0 & 1\end{array}\right ),
	diag\{p_{1},\cdots,p_{k}\}=\left(\begin{array}{cccccccccc} 
		p_{1}&&\\
		&\ddots \\
		&&p_{k}\end{array}\right ), \ for\ integer \ k\geq 1.
\end{align*}
Hence, in the matrix form, $N(n,K=0)$ can be written as 
\begin{align*}
	N(n,K=0)=BD_{1}diag\{\tilde{p}_{1},\tilde{p}_{2},\tilde{p}_{3},\tilde{p}_{4}\} D_{1}^{T} B^{T}=B\hat{P}_{1} B^{T},
\end{align*}
where
\begin{align*}
	\hat{P}_{1}=\left(\begin{array}{cccccccccc} 
		\tilde{p}_{2}+\tilde{p}_{3}+\tilde{p}_{4}&-\tilde{p}_{2}&-\tilde{p}_{3}&-\tilde{p}_{4}\\
		-\tilde{p}_{2}&\tilde{p}_{2}&0&0\\
		-\tilde{p}_{3}&0&\tilde{p}_{3}&0\\
		-\tilde{p}_{4}&0&0&\tilde{p}_{4}\end{array}\right ).
\end{align*}
So
\begin{align*}
	N_{1}^{*}(n,K=0)=Q_{1}B^{-1}N(n,K=0)(B^{T})^{-1} Q_{1}^{T}=diag\{0,\tilde{p}_{2},\tilde{p}_{3},\tilde{p}_{4}\}.
\end{align*}
However, the operator $\Gamma_{2}(F,F)(\eta=S_{1})$ is determined by matrix $$M(n,K=0)=I^{1}(n,K=0)(S_{1})-2I^{2}(n,K=0)(S_{1})+2I^{3}(n,K=0)(S_{1}),$$
where  $I^{1}(n,K=0)(S_{1}),\ I^{2}(n,K=0)(S_{1}),\ I^{3}(n,K=0)(S_{1})$ are all $4\times 4$ matrices with elements
\begin{align*}
	I^{1}_{ij}&=\int_{\omega\in \{S_{1},S_{2},S_{3},S_{4}\}}\int_{z\in  \{S_{1},S_{2},S_{3},S_{4}\}}
	a_{i}(z,\omega,\eta )\cdot 
	a_{j}(z,\omega,\eta )p(\omega,z )p(\eta,\omega )=\sum_{k=1}^{16}a_{i,k}a_{j,k}p_{k},\\
	I^{2}_{ij}&=\int_{\omega\in \{S_{1},S_{2},S_{3},S_{4}\}}
	b_{i}(\omega )\cdot 
	b_{j}(\omega)p(\eta,\omega )=\sum_{k=1}^{4}b_{i,k}b_{j,k}\tilde{p}_{k},\\
	I^{3}_{ij}&=\int_{\omega\in \{S_{1},S_{2},S_{3},S_{4}\}}
	b_{i}(\omega )\cdot 
	p(\eta,\omega )\cdot \int_{\omega\in \{S_{1},S_{2},S_{3},S_{4}\}}
	b_{j}(\omega)p(\eta,\omega )=\sum_{k=1}^{4}b_{i,k}\tilde{p}_{k}\cdot \sum_{k=1}^{4}b_{j,k}\tilde{p}_{k}.
\end{align*}
Let
\begin{align*}
	A=\left(\begin{array}{cccccccccc} 
		a_{1,1} & a_{1,2}& \cdots & a_{1,16}\\
		a_{2,1} & a_{2,2}& \cdots & a_{2,16}\\
		a_{3,1} & a_{3,2}& \cdots & a_{3,16}\\
		a_{4,1} & a_{4,2}& \cdots & a_{4,16}\end{array}\right ). 
\end{align*}
Then $I^{1}(n,K=0)(S_{1}),\ I^{2}(n,K=0)(S_{1}),\ I^{3}(n,K=0)(S_{1})$ can be written as
\begin{align}\label{5.6}
	I^{1}(n,K=0)(S_{1})&=A \cdot  diag\{p_{1},p_{2},\cdots,p_{16}\}\cdot A^{T},\nonumber\\
	I^{2}(n,K=0)(S_{1})&=B \cdot  diag\{\tilde{p}_{1},\tilde{p}_{2},\tilde{p}_{3},\tilde{p}_{4}\} \cdot B^{T},
	I^{3}(n,K=0)(S_{1})=B \cdot \tilde{p}\cdot \tilde{p}^{T}\cdot
	B^{T}.
\end{align}
Noticed that there exists the following relationship between $A$ and $B$  
\begin{align}\label{eq:4.8}
	A=B\cdot D,
\end{align}
where
\begin{align*}
	D=\left(\begin{array}{cccccccccccccccc} 
		0&-1&-1&-1 &2&1&1&1&2&1&1&1&2&1&1&1\\
		0&1&0&0 &-2&-1&-2&-2&0&1&0&0&0&1&0&0\\
		0&0&1&0 &0&0&1&0&-2&-2&-1&-2&0&0&1&0\\
		0&0&0&1&0&0&0&1&0&0&0&1&-2&-2&-2&-1
	\end{array}\right ) .
\end{align*}
Hence
\begin{align*}
	I^{1}(n,K=0)(S_{1})=B\cdot D \cdot diag\{p_{1},p_{2},\cdots,p_{16}\} \cdot D^{T}\cdot
	B.
\end{align*}
Denote that $D=(D_{1},D_{2},D_{3},D_{4})$ and $diag\{p_{1},p_{2},\cdots,p_{16}\}=diag\{\tilde{P_{1}},\tilde{P_{2}},\tilde{P_{3}},\tilde{P_{4}}\}$. Then
\begin{align*}
	B^{-1}M(n,K=0)(B^{T})^{-1}&=B^{-1}(I^{1}-2I^{2}+2I^{3})(B^{T})^{-1}\\
	&=\sum_{k=1}^{4}D_{k} \cdot \tilde{P_{k}}\cdot D_{k}^{T}
	-2\left(\begin{array}{cccccccccc} 
		\tilde{p}_{1}- \tilde{p}^{2}_{1}&- \tilde{p}_{1}\tilde{p}_{2}& -\tilde{p}_{1}\tilde{p}_{3}& -\tilde{p}_{1}\tilde{p}_{4}\\
		-\tilde{p}_{1}\tilde{p}_{2} &\tilde{p}_{2}- \tilde{p}^{2}_{2}&-\tilde{p}_{2}\tilde{p}_{3}&-\tilde{p}_{2}\tilde{p}_{4}\\
		- \tilde{p}_{1}\tilde{p}_{3} &-\tilde{p}_{2}\tilde{p}_{3}&\tilde{p}_{3}- \tilde{p}^{2}_{3}&-\tilde{p}_{3}\tilde{p}_{4} \\
		-\tilde{p}_{1}\tilde{p}_{4}&-\tilde{p}_{2}\tilde{p}_{4}&-\tilde{p}_{3}\tilde{p}_{4}&\tilde{p}_{4}- \tilde{p}^{2}_{4}\end{array}\right ).
\end{align*}
Obviously
\begin{align*}
	Q_{1}\cdot \left(\begin{array}{cccccccccc} 
		\tilde{p}_{1}- \tilde{p}^{2}_{1}&- \tilde{p}_{1}\tilde{p}_{2}& -\tilde{p}_{1}\tilde{p}_{3}& -\tilde{p}_{1}\tilde{p}_{4}\\
		-\tilde{p}_{1}\tilde{p}_{2} &\tilde{p}_{2}- \tilde{p}^{2}_{2}&-\tilde{p}_{2}\tilde{p}_{3}&-\tilde{p}_{2}\tilde{p}_{4}\\
		- \tilde{p}_{1}\tilde{p}_{3} &-\tilde{p}_{2}\tilde{p}_{3}&\tilde{p}_{3}- \tilde{p}^{2}_{3}&-\tilde{p}_{2}\tilde{p}_{3} \\
		-\tilde{p}_{1}\tilde{p}_{4}&-\tilde{p}_{2}\tilde{p}_{4}&-\tilde{p}_{3}\tilde{p}_{4}&\tilde{p}_{4}- \tilde{p}^{2}_{4}\end{array}\right )\cdot Q_{1}^{T}=\left(\begin{array}{cccccccccc} 
		0&0& 0& 0\\
		0 &\tilde{p}_{2}- \tilde{p}^{2}_{2}&-\tilde{p}_{2}\tilde{p}_{3}&-\tilde{p}_{2}\tilde{p}_{4}\\
		0 &-\tilde{p}_{2}\tilde{p}_{3}&\tilde{p}_{3}- \tilde{p}^{2}_{3}&-\tilde{p}_{3}\tilde{p}_{4} \\
		0&-\tilde{p}_{2}\tilde{p}_{4}&-\tilde{p}_{3}\tilde{p}_{4}&\tilde{p}_{4}- \tilde{p}^{2}_{4}\end{array}\right ).
\end{align*}
Denote
\begin{align*}
	M_{2,2}&=4p_{5}+4p_{7}+4p_{8}+p_{2}+p_{6}+p_{10}+p_{14}=2\Phi^{3}(z)\Phi(-z)+6\Phi(z)\Phi^{3}(-z)+8\Phi^{2}(z)\Phi^{2}(-z),\\
	M_{3,3}&=4p_{9}+4p_{10}+4p_{12}+p_{3}+p_{7}+p_{11}+p_{15}=2\Phi^{3}(z)\Phi(-z)+6\Phi(z)\Phi^{3}(-z)+8\Phi^{2}(z)\Phi^{2}(-z),\\
	M_{4,4}&=4p_{13}+4p_{14}+4p_{15}+p_{4}+p_{8}+p_{12}+p_{16}=4\Phi^{2}(z)\Phi^{2}(-z)+4\Phi^{4}(-z)+8\Phi(z)\Phi^{3}(-z)=4\Phi^{2}(-z),
\end{align*}
where $z=1-\gamma$. Hence
\begin{align*}
	&M^{*}_{1}(n,K=0)(\eta=S_{1})=Q_{1}B^{-1}M(n,K=0)(\eta=S_{1})(B^{T})^{-1} Q_{1}^{T}\\
	&=\left(\begin{array}{cccccccccccccccc} 
		0&0&0&0\\
		0&M_{2,2}&-2p_{7}-2p_{10}& -2p_{8}-2p_{14}\\
		0&-2p_{7}-2p_{10}&M_{3,3}&-2p_{12}-2p_{15} \\
		0&-2p_{8}-2p_{14}&-2p_{12}-2p_{15}&M_{4,4}
	\end{array}\right )-2
	\left(\begin{array}{cccccccccc} 
		0&0& 0& 0\\
		0 &\tilde{p}_{2}- \tilde{p}^{2}_{2}&-\tilde{p}_{2}\tilde{p}_{3}&-\tilde{p}_{2}\tilde{p}_{4}\\
		0 &-\tilde{p}_{2}\tilde{p}_{3}&\tilde{p}_{3}- \tilde{p}^{2}_{3}&-\tilde{p}_{3}\tilde{p}_{4} \\
		0&-\tilde{p}_{2}\tilde{p}_{4}&-\tilde{p}_{3}\tilde{p}_{4}&\tilde{p}_{4}- \tilde{p}^{2}_{4}\end{array}\right )\\
	&=\left(\begin{array}{cccccccccccccccc} 
		0&0&0&0\\
		0&4\Phi(z)\Phi^{2}(-z)+2\Phi^{2}(z)\Phi^{2}(-z)&2\Phi^{2}(z)\Phi^{2}(-z)-4\Phi(z)\Phi^{3}(-z)& -2\Phi^{2}(z)\Phi^{2}(-z)\\
		0&2\Phi^{2}(z)\Phi^{2}(-z)-4\Phi(z)\Phi^{3}(-z)&4\Phi(z)\Phi^{2}(-z)+2\Phi^{2}(z)\Phi^{2}(-z)&-2\Phi^{2}(z)\Phi^{2}(-z)\\
		0&-2\Phi^{2}(z)\Phi^{2}(-z)&-2\Phi^{2}(z)\Phi^{2}(-z)&2\Phi^{2}(-z)+2\Phi^{4}(-z)
	\end{array}\right ).
\end{align*}

\begin{lemma}\label{lem:4.5}
	Assume that the system parameter is $K=0$, the corresponding relationship between the matrices $\{N^{*}_{i},
	\	M^{*}_{i}\}_{i=1,2,3,4}$  defined in Lemma \ref{lem:4.4} and the initial datas $\{\eta=S_{i}\}_{i=1,2,3,4}$ are given by
	\begin{align*}
		N^{*}_{i}(n,K=0)(\eta=S_{i})&=P_{1i}P_{jk}N^{*}_{1}(n,K=0)(\eta=S_{1})P_{jk}P_{1i},\\
		M^{*}_{i}(n,K=0)(\eta=S_{i})&=P_{1i}P_{jk}M^{*}_{1}(n,K=0)(\eta=S_{1})P_{jk}P_{1i} .
	\end{align*}	
	where $i,j,k=2,3,4$, but are not equal to each other. To say it in another way, for $i=2,3,4$, $N^{*}_{i}(n,K=0)(\eta=S_{i})$  is  similar to $N^{*}_{1}(n,K=0)(\eta=S_{1})$ and $M^{*}_{i}(n,K=0)(\eta=S_{i})$  is  similar to $M^{*}_{1}(n,K=0)(\eta=S_{1})$.
\end{lemma}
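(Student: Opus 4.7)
The proof is essentially a symmetry argument exploiting the independence of the two coordinates at $K=0$. First I would identify the underlying group: at $K=0$ the recursion $X(x_i,n+1)=\mathrm{sgn}((1-\gamma)X(x_i,n)+\xi(x_i,n))$ decouples across $x_1$ and $x_2$, and the symmetric Gaussian noise makes each one-dimensional chain invariant under a $\pm 1$-flip. Consequently, the Klein four-group acts on $\{S_1,S_2,S_3,S_4\}$ by the three coordinate-flipping involutions, whose permutation matrices are exactly $P_{12}P_{34}$, $P_{13}P_{24}$, $P_{14}P_{23}$. Writing $\tau_i:=P_{1i}P_{jk}$ for the unique $\{j,k\}=\{2,3,4\}\setminus\{i\}$, each $\tau_i$ is the symmetric involution sending $S_1$ to $S_i$, and since $P_{1i}$ and $P_{jk}$ act on disjoint index pairs they commute, so $\tau_i=P_{1i}P_{jk}=P_{jk}P_{1i}=\tau_i^{T}$. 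The invariance $p(m,\tau_i\eta,\tau_i\omega)=p(m,\eta,\omega)$ then holds for every $m\ge 0$, both for the one-step transition driving the generator and for the $n$-step transition inside $\mathbb{B}$.

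Next I would translate this into three matrix identities: (a) the $n$-step matrix $B(n)$, and hence $B(n)^{-1}$, commutes with $\tau_i$; (b) the vector map satisfies $\mathbb{B}(\tau_i\omega)=\tau_i\mathbb{B}(\omega)$; and (c) $Q_i\tau_i=\tau_i Q_1$. Assertion (a) follows from $p(n,\tau_i S_j,\tau_i S_k)=p(n,S_j,S_k)$ applied entrywise. Assertion (b) follows from $p(n,\tau_i\omega,S_k)=p(n,\omega,\tau_i S_k)=p(n,\omega,S_{\tau_i(k)})$. Assertion (c) is a direct $4\times 4$ verification: conjugating $Q_1$ by $\tau_i$ moves the all-ones row from position $1$ to position $i$, while the swap $P_{jk}$ interchanges rows $j$ and $k$ of the identity block and is absorbed by the simultaneous swap of columns $j$ and $k$, producing $Q_i$ exactly.

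Then, performing the substitution $\omega\mapsto\tau_i\omega$ in the one-step integral defining $N$ and using (b) together with $p(1,S_i,d(\tau_i\omega))=p(1,S_1,d\omega)$, I would obtain
\begin{equation*}
N(n,0)(S_i)=\tau_i\,N(n,0)(S_1)\,\tau_i^{T}.
\end{equation*}
The same substitution works for each of $I^{1},I^{2},I^{3}$ (with the additional substitution $z\mapsto\tau_i z$ inside the double integral $I^{1}$), yielding $M(n,0)(S_i)=\tau_i M(n,0)(S_1)\tau_i^{T}$. Combining (a), (c) and these identities,
\begin{equation*}
N_i^{*}(S_i)=Q_iB^{-1}N(S_i)B^{-T}Q_i^{T}=Q_i\tau_i B^{-1}N(S_1)B^{-T}\tau_i^{T}Q_i^{T}=\tau_i Q_1 B^{-1}N(S_1)B^{-T}Q_1^{T}\tau_i^{T}=\tau_i N_1^{*}(S_1)\tau_i,
\end{equation*}
using $\tau_i=\tau_i^{T}$. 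Rewriting $\tau_i$ as $P_{1i}P_{jk}$ on the left and $P_{jk}P_{1i}$ on the right gives the stated formula, and the identical chain applies to $M_i^{*}$.

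The main obstacle, as far as subtlety goes, is confirming the symmetry of $M$ through the triple structure $M=I^{1}-2I^{2}+2I^{3}$: the argument works uniformly on each piece, but one must verify that both variable changes inside the double integral $I^{1}$ simultaneously preserve the product measure $p(1,\omega,dz)p(1,S_i,d\omega)$ and transform the integrand $a_l a_m$ via $\tau_i$. Both follow from the Klein-four invariance at the level of elementary transition probabilities, so the computation is mechanical; the $Q_i\tau_i=\tau_i Q_1$ identity and the commutation $\tau_i B=B\tau_i$ are essentially the only nontrivial algebraic ingredients.
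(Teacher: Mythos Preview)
Your proof is correct and rests on the same symmetry the paper exploits: at $K=0$ the dynamics is invariant under the Klein four-group of coordinate flips, realized on $E_2$ by the involutions $\tau_i=P_{1i}P_{jk}$. The paper proceeds more computationally: it uses the explicit factorizations $N=B\hat P_1 B^{T}$ and $I^{1}=B D\,\mathrm{diag}(p)\,D^{T}B^{T}$ together with Lemma~4.1 (which records how the diagonal blocks $P_k$ and the probabilities $\tilde p$ permute under $P_{1i}P_{jk}$), and then applies the row/column identity $Q_3=P_{13}Q_1P_{13}$ directly to these products. Your route bypasses those explicit forms by pushing the substitution $\omega\mapsto\tau_i\omega$ (and $z\mapsto\tau_i z$) through the defining integrals and isolating the three algebraic identities $\tau_iB=B\tau_i$, $\mathbb{B}(\tau_i\omega)=\tau_i\mathbb{B}(\omega)$, and $Q_i\tau_i=\tau_iQ_1$; the last of these is equivalent to the paper's $Q_i=P_{1i}Q_1P_{1i}$ because $P_{jk}$ commutes with $Q_1$. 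The gain of your organization is that it treats $N$, $I^{1}$, $I^{2}$, $I^{3}$ uniformly without unpacking $D$ or $D_1$; the paper's version makes the concrete diagonal structure (e.g.\ $N_1^{*}=\mathrm{diag}\{0,\tilde p_2,\tilde p_3,\tilde p_4\}$) visible along the way, which it later uses in Theorem~4.6.
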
	
\begin{proof}
	Without loss of generality, consider the case $\eta =S_{3}$, and the result for $\eta =S_{2},S_{4}$ also holds by the same token. By definition and Lemma \ref{lem:4.1}
	\begin{align*}
		B^{-1}N(n,K&=0)(\eta=S_{3})(B^{T})^{-1}=P_{13} D_{1} P_{13} P_{13}P_{24}diag\{\tilde{p}_{1},\tilde{p}_{2},\tilde{p}_{3},\tilde{p}_{4}\} P_{24} P_{13}P_{13} D_{1}^{T}P_{13} ,
	\end{align*}
	Then
	\begin{align*}
		N^{*}_{3}(n,K=0)(\eta=S_{3})&=Q_{3}B^{-1}N(n,K=0)(\eta=S_{3})(B^{T})^{-1}Q^{T}_{3}\\
		&=P_{13}Q_{1}P_{13}B^{-1}N(n,K=0)(\eta=S_{3})(B^{T})^{-1}P_{13}Q^{T}_{1}P_{13}\\
		&=diag\{\tilde{p}_{3},\tilde{p}_{4},0,\tilde{p}_{2}\}=P_{13}P_{24}N^{*}_{1}(n,K=0)(\eta=S_{1})P_{24}P_{13}.
	\end{align*}
	Similarly, due to~\eqref{eq:3.4}~and~\eqref{eq:4.5}~
	\begin{align*}
		M^{*}_{3}(n,K=0)(\eta=S_{3})&=Q_{3}B^{-1}M(n,K=0)(\eta=S_{3})(B^{T})^{-1}Q^{T}_{3}\\
		&=Q_{3}B^{-1}(I^{1}(n,K=0)(S_{3})-2I^{2}(n,K=0)(S_{3})+2I^{3}(n,K=0)(S_{3}))(B^{T})^{-1}Q^{T}_{3},
	\end{align*}
	where
	\begin{align*}
		B^{-1}I^{1}(n,K=0)(S_{3})(B^{T})^{-1}&=\tilde{p}_{1}P_{24}P_{13}D_{1}P_{24}P_{13}P_{3}P_{24}P_{13}D^{T}_{1}P_{13}P_{24}\\
		&+\tilde{p}_{2}P_{24}P_{13}D_{2}P_{24}P_{13}P_{4}P_{24}P_{13}D^{T}_{2}P_{13}P_{24}\\
		&+\tilde{p}_{3}P_{24}P_{13}D_{3}P_{13}P_{24}P_{1}P_{24}P_{13}D^{T}_{3}P_{13}P_{24}\\
		&+\tilde{p}_{4}P_{24}P_{13}D_{4}P_{13}P_{24}P_{2}P_{24}P_{13}D^{T}_{4}P_{13}P_{24},
	\end{align*}
	and
	\begin{align*}
		B^{-1}I^{2}(n,K=0)(S_{3})(B^{T})^{-1}&=P_{13}P_{24} diag\{\tilde{p}_{1},\tilde{p}_{2},\tilde{p}_{3},\tilde{p}_{4}\} P_{13}P_{24},\\
		B^{-1}I^{3}(n,K=0)(S_{3})(B^{T})^{-1}&=P_{13}P_{24}\cdot 
		\tilde{p}\cdot  
		\tilde{p}^{T}\cdot P_{13}P_{24}.
	\end{align*}
	According to Lemma \ref{lem:4.1} and equation~\eqref{5.6}~ 
	\begin{align*}
		M^{*}_{3}(n,K=0)(\eta=S_{3})&=P_{13}P_{24}M_{1}^{*}(n,K=0)(\eta=S_{1})P_{13}P_{24}.
	\end{align*}
	This completes the proof.
\end{proof}

\begin{theorem}\label{thm:4.6}
	In the setup above, for arbitrary two-points functions $f$, let $F(n,\eta)=P_{n}f(\eta)$. Then there exists the following curvature estimation
	\begin{align}\label{eq:4.9}
		\Gamma_{2}(F,F)(\eta)\geq \frac{\rho}{2}\Gamma(F,F)(\eta), 
	\end{align}
	where $\eta\in\{+1,-1\}^{2}$ is the initial data at spatial coordinates $x_{1}$ and $x_{2}$. Furthermore, Bakry- \'Emery curvature $\rho$ satisfies
	$$\rho=\frac{\lambda^{M_{1}^{*}(n,K=0)}_{2}}{\lambda^{N_{1}^{*}(n,K=0)}_{4}},$$
	which is independent to $f$ and $t$.
\end{theorem}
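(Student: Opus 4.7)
The strategy is to convert the Bakry--\'Emery inequality into a matrix inequality on the coefficient vector $\vec{f}$ representing the two-points function, and then extract $\rho$ via Rayleigh quotients. By the representations \eqref{eq:4.1}--\eqref{eq:4.2}, the claim $\Gamma_{2}(F,F)(\eta)\geq \tfrac{\rho}{2}\Gamma(F,F)(\eta)$ is equivalent to the quadratic form inequality $M(n,K{=}0)(\eta)\geq \rho\,N(n,K{=}0)(\eta)$ on $\mathbb{R}^{4}$, so the entire argument reduces to producing a sharp spectral comparison between the two $4\times 4$ matrices.

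First I would apply the invertible congruence $A\mapsto Q_{1}B(n)^{-1}A\,(Q_{1}B(n)^{-1})^{T}$ of Lemma \ref{lem:4.4}; invertibility of $B(n)$ is supplied by Lemma \ref{lem:4.2} and $Q_{1}$ is invertible by inspection, so this congruence preserves the order of symmetric matrices. Consequently the target is equivalent to $M_{1}^{*}(n,K{=}0)(\eta) \geq \rho\,N_{1}^{*}(n,K{=}0)(\eta)$. Because both starred matrices have vanishing first row and column (Lemma \ref{lem:4.4}), everything further reduces to the $3\times 3$ block inequality $M_{rem,1}^{*}(n,K{=}0) \geq \rho\,N_{rem,1}^{*}(n,K{=}0)$.

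Next, on the effective $3$-dimensional subspace $\{g_{1}=0\}$ the Courant--Fischer variational principle gives, for every such $\vec{g}$,
\[\vec{g}^{T}M_{rem,1}^{*}\vec{g}\;\geq\;\lambda^{M_{1}^{*}(n,K=0)}_{2}\|\vec{g}\|^{2},\qquad \vec{g}^{T}N_{rem,1}^{*}\vec{g}\;\leq\;\lambda^{N_{1}^{*}(n,K=0)}_{4}\|\vec{g}\|^{2},\]
where $\lambda^{M_{1}^{*}(n,K=0)}_{2}$ is the smallest nonzero eigenvalue of $M_{1}^{*}(n,K{=}0)$ and $\lambda^{N_{1}^{*}(n,K=0)}_{4}$ is the largest eigenvalue of $N_{1}^{*}(n,K{=}0)$. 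Setting $\rho:=\lambda^{M_{1}^{*}(n,K=0)}_{2}/\lambda^{N_{1}^{*}(n,K=0)}_{4}$ then closes the chain of inequalities and establishes the bound at the distinguished initial datum $\eta=S_{1}$. To extend it to $\eta\in\{S_{2},S_{3},S_{4}\}$, I would invoke Lemma \ref{lem:4.5}: the starred matrices with initial state $S_{i}$ are conjugate to those with initial state $S_{1}$ via the orthogonal permutation $P_{1i}P_{jk}$, so their spectra coincide and the same $\rho$ controls the ratio uniformly in $\eta\in\{+1,-1\}^{2}$.

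The main obstacle I anticipate is spectral positivity: one must verify that $\lambda^{M_{1}^{*}(n,K=0)}_{2}>0$, i.e.\ that $M_{rem,1}^{*}(n,K{=}0)$ is strictly positive definite so $\rho$ is genuinely nontrivial. Positivity of $\lambda^{N_{1}^{*}(n,K=0)}_{4}$ is immediate from the diagonal form $N_{1}^{*}(n,K{=}0)(\eta=S_{1})=\mathrm{diag}\{0,\tilde{p}_{2},\tilde{p}_{3},\tilde{p}_{4}\}$ with $\tilde p_{i}>0$, but positive definiteness of $M_{rem,1}^{*}(n,K{=}0)$ requires inspecting the explicit $3\times 3$ block computed immediately before the theorem and showing that its determinant---a polynomial in $\Phi(1-\gamma)$ and $\Phi(\gamma-1)$---does not vanish under the natural constraints on $\gamma$. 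This is the technical kernel of the argument; the rest is bookkeeping through the congruences and permutation similarities already set up in Lemmas \ref{lem:4.1}--\ref{lem:4.5}.
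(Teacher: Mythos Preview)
Your plan matches the paper's proof almost exactly: reduce the curvature inequality to $M(n,0)(\eta)\geq\rho\,N(n,0)(\eta)$, pass via the congruence $Q_iB(n)^{-1}$ to the starred matrices with zero $i$-th row and column, bound the resulting $3\times3$ block by Rayleigh quotients to extract $\rho=\lambda_2^{M_1^*}/\lambda_4^{N_1^*}$, and invoke Lemma~\ref{lem:4.5} (orthogonal permutation similarity) to make $\rho$ uniform in $\eta$. The only slip is in the positivity step you flag at the end: nonvanishing of $\det M_{rem,1}^{*}$ alone does not give positive definiteness---the paper instead exploits the symmetry of the explicit $3\times3$ block (it has the form $\begin{smallmatrix}a&b&c\\b&a&c\\c&c&d\end{smallmatrix}$) to split off the eigenvalue $a-b>0$ and reduce to a quadratic in $\lambda$ whose constant term $8\Phi^2(z)\Phi^2(-z)(2\Phi^2(-z)+2\Phi^4(-z))-8\Phi^4(z)\Phi^4(-z)>0$ and trace are both positive, forcing all three eigenvalues of $M_{rem,1}^{*}$ strictly positive and hence $\lambda_2^{M_1^*}>0$.
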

\begin{proof}
	Thanks to~\eqref{eq:3.3}~and~\eqref{eq:3.4}~, given $\eta=S_{1}=(+1,+1)$, 
	\begin{align*}
		2\Gamma(F,F)(\eta)=(\vec{f})^{T}
		\cdot N(n,K)(\eta)
		\cdot \vec{f},
		4\Gamma_{2}(F,F)(\eta)=(\vec{f})^{T}
		\cdot M(n,K)(\eta)
		\cdot \vec{f}.
	\end{align*}
	Because $K=0$, then
	\begin{align*}
		&M_{1}^{*}(n,K=0)(\eta=S_{1})=Q_{1}B^{-1}M(n,K=0)(\eta=S_{1})(Q_{1}B^{-1})^{T}\\
		&=\left(\begin{array}{cccccccccccccccc} 
			0&0&0&0\\
			0&4\Phi(z)\Phi^{2}(-z)+2\Phi^{2}(z)\Phi^{2}(-z)&2\Phi^{2}(z)\Phi^{2}(-z)-4\Phi(z)\Phi^{3}(-z)& -2\Phi^{2}(z)\Phi^{2}(-z)\\
			0&2\Phi^{2}(z)\Phi^{2}(-z)-4\Phi(z)\Phi^{3}(-z)&4\Phi(z)\Phi^{2}(-z)+2\Phi^{2}(z)\Phi^{2}(-z)&-2\Phi^{2}(z)\Phi^{2}(-z)\\
			0&-2\Phi^{2}(z)\Phi^{2}(-z)&-2\Phi^{2}(z)\Phi^{2}(-z)&2\Phi^{2}(-z)+2\Phi^{4}(-z)
		\end{array}\right ),
	\end{align*}
	and
	\begin{align*}
		N_{1}^{*}(n,K=0)(\eta=S_{1})=\left(\begin{array}{cccccccccc} 
			0 &0&0&0\\
			0&\Phi(z)\Phi(-z)&0&0\\	
			0&0&\Phi(z)\Phi(-z)&0\\
			0&0&0&\Phi^{2}(-z)\end{array}\right ).
	\end{align*}
	let $\lambda^{M}_{1}\leq \lambda^{M}_{2}\leq\lambda^{M}_{3}\leq\lambda^{M}_{4}$ be the eigenvalues of matrix $M$  in a  ascending order. Then the Fiedler value $\lambda^{M_{1}^{*}(n,K=0)(\eta=S_{1})}_{2}$ of matrix $M_{1}^{*}(n,K=0)(\eta=S_{1})$ is the smallest root of the following quadratic equation
	\begin{align*}
		\lambda^{2}&-(8\Phi^{2}(z)\Phi^{2}(-z)+2\Phi^{2}(-z)+2\Phi^{4}(-z))\lambda\\
		&+8\Phi^{2}(z)\Phi^{2}(-z)\cdot (2\Phi^{2}(-z)+2\Phi^{4}(-z))-8\Phi^{4}(z)\Phi^{4}(-z)=0.
	\end{align*}
	Due to $8\Phi^{2}(z)\Phi^{2}(-z)\cdot (2\Phi^{2}(-z)+2\Phi^{4}(-z))-8\Phi^{4}(z)\Phi^{4}(-z)>0$, hence $\lambda^{M_{1}^{*}(n,K=0)(\eta=S_{1})}_{2}>\lambda^{M_{1}^{*}(n,K=0)(\eta=S_{1})}_{1}=0$, which means
	$$0=\lambda^{M_{1}^{*}(n,K=0)(\eta=S_{1})}_{1}< \lambda^{M_{1}^{*}(n,K=0)(\eta=S_{1})}_{2}\leq\lambda^{M_{1}^{*}(n,K=0)(\eta=S_{1})}_{3}\leq\lambda^{M_{1}^{*}(n,K=0)(\eta=S_{1})}_{4},$$
	$$0=\lambda^{N_{1}^{*}(n,K=0)(\eta=S_{1})}_{1}< \lambda^{N_{1}^{*}(n,K=0)(\eta=S_{1})}_{2}\leq\lambda^{N_{1}^{*}(n,K=0)(\eta=S_{1})}_{3}\leq\lambda^{N_{1}^{*}(n,K=0)(\eta=S_{1})}_{4}.$$
	Further, owing to Lemma \ref{lem:4.5} and the property that matrices which are similar to each other share the comment eigenvalues, then
	\begin{align}\label{eq:4.10}
		\frac{\lambda^{M_{1}^{*}(n,K=0)(\eta=S_{1})}_{2}}{\lambda^{N_{1}^{*}(n,K=0)(\eta=S_{1})}_{4}}=\frac{\lambda^{M_{2}^{*}(n,K=0)(\eta=S_{2})}_{2}}{\lambda^{N_{2}^{*}(n,K=0)(\eta=S_{2})}_{4}}=\frac{\lambda^{M_{3}^{*}(n,K=0)(\eta=S_{3})}_{2}}{\lambda^{N_{3}^{*}(n,K=0)(\eta=S_{3})}_{4}}=\frac{\lambda^{M_{4}^{*}(n,K=0)(\eta=S_{4})}_{2}}{\lambda^{N_{4}^{*}(n,K=0)(\eta=S_{4})}_{4}},
	\end{align}
	Claim: The Bakry- \'Emery curvature $\rho$ in equation~\eqref{eq:4.9}~is independent to $\eta$. According to~\eqref{eq:4.10}~,
	choosing
	$$\rho=\frac{\lambda^{M_{1}^{*}(n,K=0)(\eta=S_{1})}_{2}}{\lambda^{N_{1}^{*}(n,K=0)(\eta=S_{1})}_{4}}.$$ 
	Then
	\begin{align*}
		&M_{i}^{*}(n,K=0)(\eta=S_{i})-\rho N_{i}^{*}(n,K=0)(\eta=S_{i})\\
		&=M_{i}^{*}(n,K=0)(\eta=S_{i})-\lambda^{M_{i}^{*}(n,K=0)(\eta=S_{i})}_{2}I\\
		&+\lambda^{M_{i}^{*}(n,K=0)(\eta=S_{i})}_{2}I-\rho N_{i}^{*}(n,K=0)(\eta=S_{i})\geq 0.
	\end{align*}
	Hence
	\begin{align*}
		&BQ_{i}^{-1}M_{i}^{*}(n,K=0)(\eta=S_{i})(BQ_{i}^{-1})^{T}-\rho BQ_{i}^{-1}N_{i}^{*}(n,K=0)(\eta=S_{i})(BQ_{i}^{-1})^{T}\\
		&=BQ_{i}^{-1}Q_{i}B^{-1}M(n,K=0)(\eta=S_{i})(Q_{i}B^{-1})^{T}(BQ_{i}^{-1})^{T}\\
		&-\rho BQ_{i}^{-1}Q_{i}B^{-1}N(n,K=0)(\eta=S_{i})(Q_{i}B^{-1})^{T}(BQ_{i}^{-1})^{T}\\
		&=M(n,K=0)(\eta=S_{i})-\rho N(n,K=0)(\eta=S_{i})\geq 0.
	\end{align*}
	that is to say, for any $\eta=S_{i},i=1,2,3,4$, there exists $\rho>0$ such that $M(n,K=0)(\eta)-\rho N(n,K=0)(\eta)\geq 0$, then  $\Gamma_{2}(F,F)(\eta)\geq \frac{\rho}{2}\Gamma(F,F)(\eta)$, where $\rho $ is independent to $\eta$ and $f$, the claim is proved.
\end{proof}
\subsection{Continuous time Markov process and local Poincar\'e inequality}\label{sec:4.2}
Based on the discussion about the discrete time Markov chain $X_{n}$ above, in this section, we will focus on it's continuation by compositing a Possion process and establish the local Poincar\'e inequality.
\begin{proposition}\label{pro:4.7}
	Consider a general system with parameter $K\neq 0$, there admits the following decomposition 
	\begin{align*}
		M(T-t,K)(\eta )&=M^{1}(T-t,K)(\eta)+M^{2}(T-t,K)(\eta),\\
		N(T-t,K)(\eta )&=N^{1}(T-t,K)(\eta)+N^{2}(T-t,K)(\eta).
	\end{align*}
	where $M^{1}(T-t,K)(\eta)$ and $N^{1}(T-t,K)(\eta)$ only depend on the finite dimensional projection of configuration $\eta$. For arbitrary $\varepsilon >0$, the elements of $M^{2}(T-t,K)(\eta)$ and $N^{2}(T-t,K)(\eta)$ satisfy
	\begin{align*}
		|M_{kl}^{2}(T-t,K)(\eta)|\leq 8\varepsilon,\
		|N_{kl}^{2}(T-t,K)(\eta)|\leq 2\varepsilon,\ \text{for}\ k,l=1,2,3,4.
	\end{align*}
	where $\varepsilon$ is  independent to both system parameter $K$ and initial data $\eta$.
\end{proposition}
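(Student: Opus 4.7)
The plan is to split the matrices $N(T-t,K)(\eta)$ and $M(T-t,K)(\eta)$ according to the window of coordinates of $\eta$ on which they genuinely depend, keeping the contribution from a finite window around $\{x_{1},x_{2}\}$ inside $N^{1},M^{1}$ and treating the contribution from farther coordinates as a controlled perturbation. The structural input is that, by induction on the number of steps, each entry $b_{k}(\eta)(T-t)=p(T-t,\eta,S_{k})$ is Lipschitz in the individual spin $\eta(y)$, with a Lipschitz constant that decays as $y$ moves away from $\{x_{1},x_{2}\}$: flipping $\eta(y)$ shifts the argument of the Gaussian CDF $\Phi$ at site $y$ by $2(1-\gamma)$ and at $y\pm 1$ by $2K$, this influence propagates at speed one per step, and it is damped by a factor controlled by $K\phi_{\max}$ with $\phi_{\max}=1/\sqrt{2\pi}$.

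Concretely, I would fix $\varepsilon>0$ and choose a radius $R=R(\varepsilon,K)$ so large that the summed one-coordinate Lipschitz bound over all $y$ at distance $\geq R$ from $\{x_{1},x_{2}\}$ falls below a prescribed threshold. Picking an arbitrary reference configuration $\eta^{\star}$ and letting $W$ be the radius-$R$ window around $\{x_{1},x_{2}\}$, set
\[
M^{1}(T-t,K)(\eta):=M(T-t,K)\bigl(\eta|_{W}\oplus\eta^{\star}|_{W^{c}}\bigr),\qquad N^{1}(T-t,K)(\eta):=N(T-t,K)\bigl(\eta|_{W}\oplus\eta^{\star}|_{W^{c}}\bigr),
\]
so that $M^{1},N^{1}$ depend only on the finite-dimensional projection $\eta|_{W}$. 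The remainders $M^{2}=M-M^{1}$ and $N^{2}=N-N^{1}$ are then expressed as telescoping sums obtained by swapping $\eta$ for $\eta^{\star}$ on $W^{c}$ one coordinate at a time, with each swap costing at most the Lipschitz bound at that distance. After the algebraic expansion of $N_{kl}$ as the integral of a product of two $b$-differences weighted by $p(\eta,d\omega)$, and of $M_{kl}=I^{1}_{kl}-2I^{2}_{kl}+2I^{3}_{kl}$ as the analogous combination, the total absolute-coefficient counts produce the prefactors $2$ and $8$ stated in the proposition.

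The main obstacle is the uniform-in-$(T-t)$ Lipschitz decay claimed in the first step. A naive induction picks up polynomial-in-$(T-t)$ factors coming from propagation, so one must obtain a genuinely contractive estimate: either via a coupling argument exploiting the fact that each one-site Bernoulli transition probability $\Phi(\cdot)$ is bounded away from $0$ and $1$ uniformly in the neighbors, or by restricting to the regime where $2K\phi_{\max}<1$ so that the geometric damping beats the linear propagation speed. Once this decay estimate is secured, the remaining manipulations---expanding the defining integrals, introducing $\eta^{\star}$, summing the per-coordinate errors over $W^{c}$, and reading off the constants $2$ and $8$ from the coefficient bookkeeping---are routine and deliver the decomposition, with both $M^{2}$ and $N^{2}$ satisfying entrywise bounds independent of the particular $\eta$ and of $K$.
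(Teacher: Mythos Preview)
Your approach is genuinely different from the paper's, and it carries a real gap that the paper's method avoids entirely.

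The paper does \emph{not} truncate in space. It exploits the fact that the continuous-time process is the Poisson subordination of the discrete chain,
\[
p(T-t,\eta,\omega)=\sum_{k=0}^{\infty}e^{-(T-t)}\frac{(T-t)^{k}}{k!}\,p(k,\eta,\omega),
\]
and simply cuts this series at some $N^{\varepsilon}_{T-t}$ with Poisson tail $<\varepsilon$. One then sets $b_{i}^{1}(\omega)$ to be the partial sum and $b_{i}^{2}(\omega)$ the tail, so $0<b_{i}^{2}<\varepsilon$ and $|a_{i}^{2}|<2\varepsilon$ trivially and uniformly in $K$ and $\eta$; the constants $2$ and $8$ drop out of the bilinear expansions of $N$ and $M=I^{1}-2I^{2}+2I^{3}$ with no further work. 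The finite-dimensional dependence of $M^{1},N^{1}$ on $\eta$ is then automatic, because each discrete $k$-step kernel $p(k,\cdot,S_{i})$ depends only on coordinates within distance $k$ of $\{x_{1},x_{2}\}$ (nearest-neighbor dynamics), and the outer one- and two-step integrals add at most two more coordinates on each side.

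Your route---freezing $\eta$ outside a window and telescoping---would require exactly the uniform-in-$(T-t)$ spatial Lipschitz decay you flag as the ``main obstacle''. You do not prove it; you only sketch two possible mechanisms (coupling, or a smallness condition $2K\phi_{\max}<1$). Neither is carried out, and the second would be an unwanted restriction on $K$ that the proposition does not impose. Moreover, even granting such a decay estimate, there is no reason the coefficient bookkeeping in your telescoping argument should reproduce the specific constants $2$ and $8$; those come from the crude bounds $|b_{i}^{1}|\le 1$, $|b_{i}^{2}|<\varepsilon$, $|a_{i}^{1}|\le 2$, $|a_{i}^{2}|<2\varepsilon$ that are immediate in the Poisson-truncation picture but have no analogue in yours. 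In short, you have missed the structural shortcut (time truncation via Poisson subordination) that makes the proposition essentially a one-line tail estimate, and replaced it with a harder spatial-decay problem that remains unresolved.
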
 
\begin{proof}
	Let $p(n,\eta,\omega)$ be the $n$-step transition probability of the Markov chain  $X_{n}$, then the transition probability of $X_{t}$ satisfies
	\begin{align*}
		p(t,\eta,\omega)=\sum_{k=0}^{\infty}e^{- t}\frac{t^{k}}{k!}p(k,\eta,\omega).
	\end{align*}
	Hence, for any fixed time $T-t, t\in[0,T]$, $\forall  \varepsilon >0$, there exists a positive integer large enough $N^{\varepsilon}_{T-t}$ such that
	\begin{align*}
		\sum_{k=N^{\varepsilon}_{T-t}+1}^{\infty}e^{-(T-t)}\frac{(T-t)^{k}}{k!}<\varepsilon.
	\end{align*}
	then
	\begin{align*}
		p(T-t,\eta,\omega)&=\sum_{k=0}^{N^{\varepsilon}_{T-t}}e^{-(T-t)}\frac{(T-t)^{k}}{k!}p(k,\eta,\omega)+\sum_{k=N^{\varepsilon}_{T-t}+1}^{\infty}e^{-(T-t)}\frac{(T-t)^{k}}{k!}p(k,\eta,\omega)\\
		&<\sum_{k=0}^{N^{\varepsilon}_{T-t}}e^{-(T-t)}\frac{(T-t)^{k}}{k!}p(k,\eta,\omega)+\varepsilon,\ \quad  \forall K,\ \forall \eta.
	\end{align*}
	Due to \eqref{eq:3.4}
	\begin{align*}
		M(T-t,K)(\eta)&=\int_{\omega\in E}\int_{z\in E}\mathbb{A}(z,\omega,\eta) \cdot \mathbb{A}^{T}(z,\omega,\eta)p(\omega,dz )p(\eta,d\omega )\\
		&-2\int_{\omega\in E}\mathbb{B}(\omega) \cdot \mathbb{B}^{T}(\omega)p(\eta,d\omega )+2\int_{\omega\in E}\mathbb{B}(\omega)p(\eta,d\omega ) \cdot \int_{\omega\in E}\mathbb{B}^{T}(\omega)p(\eta,d\omega ),
	\end{align*}
	where
	\begin{align*}
		b_{i}(\omega)&=p(T-t,\omega,S_{i})=  \sum_{k=0}^{N^{\varepsilon}_{T-t}}e^{-(T-t)}\frac{(T-t)^{k}}{k!}p(k,\omega,S_{i})+\sum_{k=N^{\varepsilon}_{T-t}+1}^{\infty}e^{-(T-t)}\frac{(T-t)^{k}}{k!}p(k,\omega,S_{i})
		\\
		&:= b^{1}_{i}(\omega)+b^{2}_{i}(\omega),\quad 0<b^{1}_{i}(\omega)\leq 1, \quad 0<b^{2}_{i}(\omega)<\varepsilon,\\
		a_{i}(z,\omega,\eta)
		&=b_{i}(z)-2b_{i}(\omega)+b_{i}(\eta)\\
		&=b^{1}_{i}(z)-2b^{1}_{i}(\omega)+b^{1}_{i}(\eta)+b^{2}_{i}(z)-2b^{2}_{i}(\omega)+b^{2}_{i}(\eta)\\
		&:= a^{1}_{i}(\omega)+a^{2}_{i}(\omega),\quad -2<a^{1}_{i}(\omega)\leq 2, \quad -2\varepsilon<a^{2}_{i}(\omega)<2\varepsilon.
	\end{align*}
	That is to say, the transition probability can be divided into 
	\begin{align*}
		\mathbb{A}(z,\omega,\eta)=\mathbb{A}^{1}(z,\omega,\eta)+\mathbb{A}^{2}(z,\omega,\eta), \ \text{and}\
		\mathbb{B}(\omega)=\mathbb{B}^{1}(\omega)+\mathbb{B}^{2}(\omega).
	\end{align*}
	With this decomposition in hand, we can also divide matrix $M(T-t,K)(\eta)$ into two parts
	\begin{align*}
		M(T-t,K)(\eta)=M^{1}(T-t,K)(\eta)+M^{2}(T-t,K)(\eta ),
	\end{align*}
	where
	\begin{align}\label{eq:5.9}
		M^{1}(T-t,K)(\eta)&=\int_{\omega\in E}\int_{z\in E}\mathbb{A}^{1}(z,\omega,\eta) \cdot \mathbb{A}^{1}(z,\omega,\eta)^{T}p(\omega,dz )p(\eta,d\omega )\nonumber\\
		&-2\int_{\omega\in E}\mathbb{B}^{1}(\omega) \cdot \mathbb{B}^{1}(\omega)^{T}p(\eta,d\omega )\\
		&+2\int_{\omega\in E}\mathbb{B}^{1}(\omega)p(\eta,d\omega ) \cdot \int_{\omega\in E}\mathbb{B}^{1}(\omega)^{T}p(\eta,d\omega ).\nonumber
	\end{align}
	and
	\begin{align*}
		|M_{kl}^{2}(T-t,K)(\eta)|\leq 8\varepsilon.
	\end{align*}
	Similarly
	\begin{align*}
		N(T-t,K)(\eta)&=N^{1}(T-t,K)(\eta)+N^{2}(T-t,K)(\eta),
	\end{align*}
	where
	\begin{align}\label{eq:4.11}
		N^{1}(T-t,K)(\eta)=\int_{\omega\in E}(\mathbb{B}^{1}(\omega)-\mathbb{B}^{1}(\eta)) \cdot (\mathbb{B}^{1}(\omega)-\mathbb{B}^{1}(\eta))^{T}p(\eta,d\omega ),
	\end{align}
	and
	\begin{align*}
		|N_{kl}^{2}(T-t,K)(\eta)|\leq 2\varepsilon.
	\end{align*}
	which completes the lemma.
\end{proof}
\begin{lemma}\label{lem:4.8} 
	For the case that system parameter $K= 0$, there exists the invertible matrix  $\hat{B}(T-t)$ such that
	\begin{align*}
		\hat{B}(T-t)^{-1}M^{1}(T-t,K=0)(\eta=S_{1})(\hat{B}(T-t)^{T})^{-1}&=D \cdot  diag\{p_{1},p_{2},\cdots,p_{16}\}\cdot D^{T}\nonumber\\
		&-2\cdot diag\{\tilde{p}_{1},\tilde{p}_{2},\tilde{p}_{3},\tilde{p}_{4}\} +2 \tilde{p}\cdot \tilde{p}^{T},\\
		\hat{B}(T-t)^{-1}N^{1}(T-t,K=0)(\eta=S_{1})(\hat{B}(T-t)^{T})^{-1}&=\hat{P}_{1}.
	\end{align*}
	where $p_{i},i=1,\cdots, 16$ and  $\tilde{p}=(\tilde{p}_{1},\tilde{p}_{2},\tilde{p}_{3},\tilde{p}_{4})$ defined as in equation \eqref{eq:4.3}.
\end{lemma}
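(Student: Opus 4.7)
The plan is to take $\hat{B}(T-t)$ as the natural continuous-time, truncated-Poisson analogue of the discrete matrix $B(n)$ from Lemma 4.2, namely the $4\times 4$ matrix
\[
\hat{B}(T-t)_{ij}=b^{1}_{i}(S_{j})(T-t)=\sum_{k=0}^{N^{\varepsilon}_{T-t}}e^{-(T-t)}\frac{(T-t)^{k}}{k!}\,p(k,S_{j},S_{i}).
\]
With this identification the restriction of $\mathbb{B}^{1}(\omega)$ to $\omega\in\{S_{1},\dots,S_{4}\}$ is exactly the corresponding column of $\hat{B}(T-t)$, which puts us in the algebraic situation already treated for the discrete chain in Section \ref{sec:4.1}.

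First I would derive the two factorization identities. Because the dynamics~\eqref{eq:1.3}~are site-independent at $K=0$, the $k$-step probability $p(k,\omega,S_{i})$ depends on $\omega$ only through $(\omega(x_{1}),\omega(x_{2}))$, so the integrals over $\omega,z\in E$ in~\eqref{eq:5.9}~and~\eqref{eq:4.11}~collapse to sums over $\{S_{1},\dots,S_{4}\}$ weighted by the same one-step probabilities $\tilde{p}_{1},\dots,\tilde{p}_{4}$ that appear in Section \ref{sec:4.1}. Since $\mathbb{A}^{1}(z,\omega,\eta)=\mathbb{B}^{1}(z)-2\mathbb{B}^{1}(\omega)+\mathbb{B}^{1}(\eta)$ is the same affine combination of columns of $\mathbb{B}^{1}$ as $\mathbb{A}$ was of $\mathbb{B}$, the combinatorial matrix $D$ from~\eqref{eq:4.8}~still satisfies $A^{1}=\hat{B}(T-t)\cdot D$. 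Repeating the block manipulations of Section \ref{sec:4.1} verbatim yields $N^{1}=\hat{B}(T-t)\,\hat{P}_{1}\,\hat{B}(T-t)^{T}$ and $M^{1}=\hat{B}(T-t)\,\bigl[D\,\mathrm{diag}(p_{1},\dots,p_{16})\,D^{T}-2\,\mathrm{diag}(\tilde p)+2\tilde p\tilde p^{T}\bigr]\,\hat{B}(T-t)^{T}$, from which the two claimed identities follow after inverting $\hat{B}(T-t)$.

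The remaining task is the invertibility of $\hat{B}(T-t)$. By Lemma \ref{lem:4.3} one may write $y_{1}(k)=y_{4}(k)=\tfrac{1}{2}+\tfrac{1}{2}(2\alpha)^{k}$ and $y_{2}(k)=y_{3}(k)=\tfrac{1}{2}-\tfrac{1}{2}(2\alpha)^{k}$ with $\alpha=\Phi(1-\gamma)-\tfrac{1}{2}$; combined with $p(k,S_{j},S_{i})=y_{a(i,j)}(k)\,y_{b(i,j)}(k)$ this forces $\hat{B}(T-t)$ into the doubly symmetric form
\[
\hat{B}(T-t)=\begin{pmatrix}\hat{a}&\hat{b}&\hat{b}&\hat{c}\\ \hat{b}&\hat{a}&\hat{c}&\hat{b}\\ \hat{b}&\hat{c}&\hat{a}&\hat{b}\\ \hat{c}&\hat{b}&\hat{b}&\hat{a}\end{pmatrix},
\]
which is $\mathbb{Z}_{2}\times\mathbb{Z}_{2}$-equivariant under joint spin flips and site swaps. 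Diagonalising in the character basis $(1,1,1,1),(1,-1,-1,1),(1,-1,1,-1),(1,1,-1,-1)$ gives four explicit eigenvalues $\hat{E}_{0},\hat{E}_{2},\hat{E}_{1},\hat{E}_{1}$, where $\hat{E}_{j}:=\sum_{k=0}^{N^{\varepsilon}_{T-t}}e^{-(T-t)}\frac{(T-t)^{k}}{k!}(2\alpha)^{jk}$; each $\hat{E}_{j}$ is within $O(\varepsilon)$ of the strictly positive limit $e^{((2\alpha)^{j}-1)(T-t)}$, so choosing $\varepsilon$ small enough makes all four eigenvalues nonzero and $\hat{B}(T-t)$ invertible.

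The hard step is this invertibility argument, because the Poisson truncation destroys the clean rank-structure $p(k,S_{j},S_{i})=y_{a}(k)y_{b}(k)$ at the matrix level (summing products against Poisson weights no longer splits as an outer product of vectors), so one cannot simply copy Lemma \ref{lem:4.2}. The rescue is that the full $\mathbb{Z}_{2}\times\mathbb{Z}_{2}$ symmetry of the two-site problem commutes with the truncation, pinning the eigenbasis independently of $N^{\varepsilon}_{T-t}$ and reducing invertibility to the positivity of three scalar truncated Poisson sums with strictly positive exponential limits.
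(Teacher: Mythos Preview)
Your proposal is correct and follows the same factorization strategy as the paper: define $\hat{B}(T-t)=\sum_{k=0}^{N^{\varepsilon}_{T-t}}e^{-(T-t)}\frac{(T-t)^{k}}{k!}B(k)$, use that at $K=0$ the integrals in \eqref{eq:5.9} and \eqref{eq:4.11} collapse to sums over $E_{2}$, and then reproduce the discrete-time algebra $A^{1}=\hat{B}\cdot D$ and $N^{1}=\hat{B}\hat{P}_{1}\hat{B}^{T}$ exactly as in Section~\ref{sec:4.1}. This part is essentially identical to the paper's argument.

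The one genuine difference is your invertibility proof. The paper (deferring to Lemma~\ref{lem:4.9}) expands the determinant directly into
\[
\det(\hat{B}(T-t))=\Bigl(\textstyle\sum_{k}c_{k}\Bigr)\Bigl(\textstyle\sum_{k}c_{k}(y_{1}(k)-y_{2}(k))^{2}\Bigr)\Bigl(\textstyle\sum_{k}c_{k}(y_{1}(k)-y_{2}(k))\Bigr)^{2},
\]
and then controls the last factor via uniform convergence of $g_{n}(x)=\sum_{k\le n}x^{k}/k!$ on $[t-T,0]$. You instead exploit the $\mathbb{Z}_{2}\times\mathbb{Z}_{2}$ symmetry of the two-site problem to diagonalise $\hat{B}(T-t)$ in the character basis, reading off the four eigenvalues $\hat{E}_{0},\hat{E}_{1},\hat{E}_{1},\hat{E}_{2}$ directly; since $y_{1}(k)-y_{2}(k)=(2\alpha)^{k}$ and $y_{1}(k)+y_{2}(k)=1$, your eigenvalues are exactly the four factors in the paper's determinant. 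Your route is more conceptual and shorter (the Poisson tail bound $|(2\alpha)^{jk}|\le 1$ replaces the uniform-convergence step), while the paper's explicit determinant calculation also yields the quantitative bound $|\hat{B}(T-t)^{-1}_{ij}|=O(e^{T-t})$ needed later in Proposition~\ref{pro:4.10}; you should note that this growth estimate drops out of your eigenvalue formula as well, since the smallest eigenvalue is $\hat{E}_{2}\sim e^{(4\alpha^{2}-1)(T-t)}$.
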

\begin{proof}
	According to equation~\eqref{eq:4.11}~ in proposition \ref{pro:4.7}, we know that
	\begin{align*}
		N^{1}(T-t,K)(\eta)=\int_{\omega\in E}(\mathbb{B}^{1}(\omega)-\mathbb{B}^{1}(\eta)) \cdot (\mathbb{B}^{1}(\omega)-\mathbb{B}^{1}(\eta))^{T}p(\eta,d\omega ),
	\end{align*}
	Hence
	\begin{align*}
		(N^{1}(T-t,K)(\eta))_{ij}=\int_{\omega\in E}&\sum_{k=0}^{N^{\varepsilon}_{T-t}}e^{-(T-t)}\frac{(T-t)^{k}}{k!}(p(k,\omega,S_{i})-p(k,\eta,S_{i})) \\
		&\cdot \sum_{k=0}^{N^{\varepsilon}_{T-t}}e^{-(T-t)}\frac{(T-t)^{k}}{k!}(p(k,\omega,S_{j})-p(k,\eta,S_{j}))p(\eta,d\omega ).
	\end{align*}
	When $K= 0$, then 	
	\begin{align*}
		(N^{1}(T-t,K=0)(\eta))_{ij}=\int_{\omega\in E_{2}}&\sum_{k=0}^{N^{\varepsilon}_{T-t}}e^{-(T-t)}\frac{(T-t)^{k}}{k!}(p(k,\omega,S_{i})-p(k,\eta,S_{i})) \\
		&\cdot \sum_{k=0}^{N^{\varepsilon}_{T-t}}e^{-(T-t)}\frac{(T-t)^{k}}{k!}(p(k,\omega,S_{j})-p(k,\eta,S_{j}))p(\eta,d\omega ).
	\end{align*}
	Without loss of generality, assume that $\eta=S_{1}$, then
	\begin{align*}
		(N^{1}(T-t,K=0)(\eta))_{ij}=\sum_{l=1}^{4}&\{\sum_{k=0}^{N^{\varepsilon}_{T-t}}e^{-(T-t)}\frac{(T-t)^{k}}{k!}(b_{i,l}(k)-b_{i,1}(k))\}\\
		&\cdot \{\sum_{k=0}^{N^{\varepsilon}_{T-t}}e^{-(T-t)}\frac{(T-t)^{k}}{k!}(b_{j,l}(k)-b_{j,1}(k))\}\tilde{p}_{l}.
	\end{align*}
	Using the matrix $B(n)$ introduced in Lemma \ref{lem:4.2}, define
	\begin{align*}
		\hat{B}(T-t):= \sum_{k=0}^{N^{\varepsilon}_{T-t}}e^{-(T-t)}\frac{(T-t)^{k}}{k!}B(k).
	\end{align*}
	Recall that
	\begin{align*}
		D_{1}=
		\left(\begin{array}{cccccccccc} 
			0 & -1& -1 &-1\\
			0 & 1& 0 &0\\
			0 & 0& 1 & 0\\
			0 & 0& 0 & 1\end{array}\right ).
	\end{align*}
	Then $N(T-t,K=0)(\eta)$ has the following form
	\begin{align*}
		N^{1}(T-t,K=0)(\eta)=&	\hat{B}(T-t) D_{1}diag\{\tilde{p}_{1},\tilde{p}_{2},\tilde{p}_{3},\tilde{p}_{4}\} D_{1}^{T}\hat{B}(T-t)^{T}\\
		&=\hat{B}(T-t)\hat{P}_{1}\hat{B}(T-t)^{T}.
	\end{align*}
	As we will talk in Lemma \ref{lem:4.9} below,  $\hat{B}(T-t)$ is invertible, hence
	\begin{align*}
		\hat{B}(T-t)^{-1}N^{1}(T-t,K=0)(\hat{B}(T-t)^{T})^{-1}=\hat{P}_{1}.
	\end{align*}	
	Similarly, for
	\begin{align*}
		M^{1}(T-t,K)(\eta)&=\int_{\omega\in E}\int_{z\in E}\mathbb{A}^{1}(z,\omega,\eta) \cdot \mathbb{A}^{1}(z,\omega,\eta)^{T}p(\omega,dz )p(\eta,d\omega )\nonumber\\
		&-2\int_{\omega\in E}\mathbb{B}^{1}(\omega) \cdot \mathbb{B}^{1}(\omega)^{T}p(\eta,d\omega )\\
		&+2\int_{\omega\in E}\mathbb{B}^{1}(\omega)p(\eta,d\omega ) \cdot \int_{\omega\in E}\mathbb{B}^{1}(\omega)^{T}p(\eta,d\omega ).\nonumber
	\end{align*}
	When $K=0$ and $\eta=S_{1}$, define
	\begin{align*}
		\hat{A}(T-t):= \sum_{k=0}^{N^{\varepsilon}_{T-t}}e^{-(T-t)}\frac{(T-t)^{k}}{k!}A(k).
	\end{align*}	
	Due to~\eqref{eq:4.8}~
	\begin{align*}
		A(k)=B(k)\cdot D,
	\end{align*}
	where
	\begin{align*}
		D=\left(\begin{array}{cccccccccccccccc} 
			0&-1&-1&-1 &2&1&1&1&2&1&1&1&2&1&1&1\\
			0&1&0&0 &-2&-1&-2&-2&0&1&0&0&0&1&0&0\\
			0&0&1&0 &0&0&1&0&-2&-2&-1&-2&0&0&1&0\\
			0&0&0&1&0&0&0&1&0&0&0&1&-2&-2&-2&-1
		\end{array}\right ) .
	\end{align*}
	Explaining $M^{1}(T-t,K=0)(\eta=S_{1})$ in the matrix word
	\begin{align}\label{eq:4.12}
		M^{1}(T-t,K=0)(\eta=S_{1})&=\hat{A}(T-t) \cdot  diag\{p_{1},p_{2},\cdots,p_{16}\}\cdot \hat{A}(T-t)^{T}\nonumber\\
		&-2\hat{B}(T-t) \cdot  diag\{\tilde{p}_{1},\tilde{p}_{2},\tilde{p}_{3},\tilde{p}_{4}\} \cdot \hat{B}(T-t)^{T}\nonumber\\
		&+2\hat{B}(T-t) \cdot \tilde{p}\cdot \tilde{p}^{T}\cdot
		\hat{B}(T-t)^{T}.
	\end{align}
	Using Lemma \ref{lem:4.9} again, $\hat{B}(T-t)$ is invertible, then
	\begin{align*}
		\hat{B}(T-t)^{-1}M^{1}(T-t,K=0)(\eta=S_{1})(\hat{B}(T-t)^{T})^{-1}&=D \cdot  diag\{p_{1},p_{2},\cdots,p_{16}\}\cdot D^{T}\nonumber\\
		&-2\cdot diag\{\tilde{p}_{1},\tilde{p}_{2},\tilde{p}_{3},\tilde{p}_{4}\} +2 \tilde{p}\cdot \tilde{p}^{T}.
	\end{align*}
	We have thus proved the lemma.
\end{proof}
\begin{lemma}\label{lem:4.9}
	For any integer $n\geq 1$, $B(n)$ is the matrix introduced in Lemma \ref{lem:4.2}, and $\hat{B}(T-t)$ is defined as in Lemma \ref{lem:4.8}
	\begin{align*}
		\hat{B}(T-t):= \sum_{k=0}^{N^{\varepsilon}_{T-t}}e^{-(T-t)}\frac{(T-t)^{k}}{k!}B(k),
	\end{align*}
	where $B(0)=I$. Then for any time $T-t\geq 0$, $\hat{B}(T-t)$ is invertible and satisfies
	\begin{align*}
		|\hat{B}(T-t)_{ij}^{-1}|\leq C(T-t)=O(e^{T-t}).
	\end{align*}
\end{lemma}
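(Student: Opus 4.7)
The key observation is that when $K=0$ the sites $x_{1}$ and $x_{2}$ evolve independently, which forces $B(n)$ to factor as a Kronecker square $L(n)\otimes L(n)$, where $L(n)=\left(\begin{smallmatrix} y_{1}(n) & y_{2}(n) \\ y_{2}(n) & y_{1}(n)\end{smallmatrix}\right)$ is the single-site $n$-step transition matrix; this is exactly the circulant-like pattern already exhibited in the proof of Lemma~\ref{lem:4.2}. Combining the symmetries $y_{3}=y_{2}$, $y_{4}=y_{1}$ with the identity $\Phi(1-\gamma)+\Phi(\gamma-1)=1$ and the explicit formulas of Lemma~\ref{lem:4.3} yields $y_{1}(n)+y_{2}(n)=1$ and $y_{1}(n)-y_{2}(n)=q^{n}$, where $q:=\Phi(1-\gamma)-\Phi(\gamma-1)\in(-1,1)$. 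Hence $L(n)$ admits the $n$-independent eigenbasis given by the Hadamard matrix $R=\tfrac{1}{\sqrt{2}}\left(\begin{smallmatrix} 1 & 1 \\ 1 & -1\end{smallmatrix}\right)$, with eigenvalues $1$ and $q^{n}$, so that $B(n)=(R\otimes R)\,\operatorname{diag}(1,q^{n},q^{n},q^{2n})\,(R\otimes R)^{-1}$ with an $n$-independent similarity transform.

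I would then compare $\hat{B}(T-t)$ with the untruncated Poisson average $B^{\mathrm{full}}(s):=\sum_{k=0}^{\infty}e^{-s}\tfrac{s^{k}}{k!}B(k)$, where $s=T-t$. Because all $B(k)$ share the common eigenbasis $R\otimes R$, the generating-function identity $\sum_{k\ge 0}e^{-s}\tfrac{s^{k}}{k!}r^{k}=e^{-s(1-r)}$ yields $B^{\mathrm{full}}(s)=(R\otimes R)\,\operatorname{diag}\bigl(1,\,e^{-s(1-q)},\,e^{-s(1-q)},\,e^{-s(1-q^{2})}\bigr)\,(R\otimes R)^{-1}$. In particular $B^{\mathrm{full}}(s)$ is invertible, its smallest eigenvalue is $e^{-s(1-q^{2})}\ge e^{-s}$, and $\bigl\|(B^{\mathrm{full}}(s))^{-1}\bigr\|\le \kappa(R\otimes R)\,e^{s(1-q^{2})}=O(e^{s})$.

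For the truncation remainder $E(s):=B^{\mathrm{full}}(s)-\hat{B}(s)=\sum_{k>N_{s}^{\varepsilon}}e^{-s}\tfrac{s^{k}}{k!}B(k)$, every entry of $B(k)$ is a probability in $[0,1]$ and the Poisson tail has total mass less than $\varepsilon$ by the defining property of $N_{s}^{\varepsilon}$, so $|E(s)_{ij}|\le \varepsilon$ entrywise. Choosing $\varepsilon$ small enough that $\varepsilon\,\bigl\|(B^{\mathrm{full}}(s))^{-1}\bigr\|<\tfrac{1}{2}$ (permitted because $\varepsilon$ is arbitrary in Proposition~\ref{pro:4.7}), a standard Neumann-series argument applied to $\hat{B}(s)=B^{\mathrm{full}}(s)\bigl(I-(B^{\mathrm{full}}(s))^{-1}E(s)\bigr)$ yields $\hat{B}(s)$ invertible with $\|\hat{B}(s)^{-1}\|\le 2\bigl\|(B^{\mathrm{full}}(s))^{-1}\bigr\|=O(e^{s})$. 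The entrywise estimate $|\hat{B}(s)^{-1}_{ij}|=O(e^{s})$ then follows from the equivalence of matrix norms in the fixed dimension four.

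The principal obstacle is the interplay between the small parameter $\varepsilon$ and the exponentially small spectral gap $e^{-s(1-q^{2})}$ of $B^{\mathrm{full}}(s)$: as $s=T-t\to\infty$, one must let $\varepsilon$ shrink with $s$ in order to keep $\hat{B}(s)$ well conditioned, so that the perturbation $E(s)$ does not swallow the smallest eigenvalue. Because the present lemma is stated pointwise in $T-t$ and Proposition~\ref{pro:4.7} leaves $\varepsilon$ free for each $s$, this is legitimate; however, bookkeeping must be done carefully to ensure the same refined choice of $\varepsilon$ is consistent with its role elsewhere in Section~\ref{sec:4.2} (in particular in Lemma~\ref{lem:4.8} and the later propositions relying on it).
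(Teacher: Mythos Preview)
Your argument is correct and takes a genuinely different route from the paper. The paper works directly with the determinant: it writes out $\hat{B}(T-t)$ entrywise using the explicit form of $B(k)$ from Lemmas~\ref{lem:4.2}--\ref{lem:4.3}, factors $\det(\hat{B})$ as a product of three truncated Poisson sums involving $1$, $(y_{1}(k)-y_{2}(k))$ and $(y_{1}(k)-y_{2}(k))^{2}$, and then bounds each factor away from zero by comparing the partial sums $\sum_{k\le N}\frac{x^{k}}{k!}$ with $e^{x}$ uniformly on $[t-T,0]$ (choosing $N^{\varepsilon}_{T-t}$ large enough that the tail is at most $\tfrac{1}{2}e^{t-T}$). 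Your Kronecker-square factorisation $B(k)=L(k)\otimes L(k)$, the simultaneous diagonalisation in the fixed Hadamard basis $R\otimes R$, and the Neumann-series control of the truncation error are cleaner structurally and, unlike the paper's proof, actually deliver the norm bound on $\hat{B}^{-1}$ rather than only invertibility. What the paper's approach buys is that it avoids the perturbation machinery entirely.

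One small slip to fix: among the eigenvalues $\{1,e^{-s(1-q)},e^{-s(1-q)},e^{-s(1-q^{2})}\}$ of $B^{\mathrm{full}}(s)$, the smallest is $e^{-s(1-q^{2})}$ only when $q>0$. In the paper's regime $\gamma>1$ one has $q=\Phi(1-\gamma)-\Phi(\gamma-1)\in(-1,0)$, so $1-q>1>1-q^{2}$ and the smallest eigenvalue is instead $e^{-s(1-q)}$. This gives $\|(B^{\mathrm{full}}(s))^{-1}\|=e^{s(1-q)}$, which lies between $e^{s}$ and $e^{2s}$. Your Neumann-series step still goes through verbatim, and the resulting entrywise bound is $O(e^{(1-q)s})$; this is the exponential growth actually needed downstream in Proposition~\ref{pro:4.10}, where the compensating choice of $\varepsilon(T-t)$ can simply absorb the constant $1-q$ in the exponent.
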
	
\begin{proof}
	Assume $y_{1}(0)=1,y_{2}(0)=0$, due to Lemma \ref{lem:4.2} and  \ref{lem:4.3}, we know that
	\begin{align*}
		\hat{B}(T-t)&=\sum_{k=0}^{N^{\varepsilon}_{T-t}}e^{-(T-t)}\frac{(T-t)^{k}}{k!}\left(\begin{array}{cccccccccc} 
			b_{1,1}(k) & b_{1,2}(k)& b_{1,3}(k) & b_{1,4}(k)\\
			b_{2,1}(k)& b_{2,2}(k)& b_{2,3}(k) & b_{2,4}(k)\\
			b_{3,1}(k) & b_{3,2}(k)& b_{3,3}(k)& b_{3,4}(k)\\
			b_{4,1}(k) &b_{4,2}(k)& b_{4,3}(k) & b_{4,4}(n)\end{array}\right )\\
		&=\sum_{k=0}^{N^{\varepsilon}_{T-t}}e^{-(T-t)}\frac{(T-t)^{k}}{k!}\left(\begin{array}{cccccccccc} 
			y^{2}_{1}(k)& y_{1}(k)y_{2}(k)& y_{1}(k)y_{2}(k) & y^{2}_{2}(k)\\
			y_{1}(k)y_{2}(k) &y^{2}_{1}(k)&  y^{2}_{2}(k) & y_{1}(k)y_{2}(k)\\
			y_{1}(k)y_{2}(k) &  y^{2}_{2}(k)& y^{2}_{1}(k)& y_{1}(k)y_{2}(k)\\
			y^{2}_{2}(k)&y_{1}(k)y_{2}(k)& y_{1}(k)y_{2}(k) &y^{2}_{1}(k)\end{array}\right ),
	\end{align*}
	With some long but straightforward algebra, we get
	\begin{align*}
		det(\hat{B}(T-t))
		&=\sum_{k=0}^{N^{\varepsilon}_{T-t}}e^{-(T-t)}\frac{(T-t)^{k}}{k!}\cdot\sum_{k=0}^{N^{\varepsilon}_{T-t}}e^{-(T-t)}\frac{(T-t)^{k}}{k!} (y_{1}(k)-y_{2}(k))^{2}\\
		&\cdot \left(\sum_{k=0}^{N^{\varepsilon}_{T-t}}e^{-(T-t)}\frac{(T-t)^{k}}{k!}(y_{1}(k)- y_{2}(k))\right)^{2}.
	\end{align*}	
	Thanks to Lemma \ref{lem:4.3}, for any integer $k\geq 1$,   $$y_{1}(k)- y_{2}(k)=2(\Phi(1-\gamma)-\frac{1}{2})(\Phi(1-\gamma)-\Phi(\gamma-1))^{k-1}.$$
	We choose the parameter $\gamma> 1$, so $-1<\Phi(1-\gamma)-\Phi(\gamma-1))<0$. Consider the following series of functions
	$$g_{n}(x)=\sum_{k=0}^{n}\frac{x^{k}}{k!},x\in [t-T,0].$$
	It is easy to know, $g_{n}(x)$ is uniformly convergent to $e^{x}$ in interval $ [t-T,0]$. That is to say, given a  fixed  $T-t\geq 0$, for any $0<\delta<e^{t-T}$, there exists a $N^{\delta}_{T-t}$ large enough, such that for $n\geq N^{\delta}_{T-t}$
	\begin{align*}
		\sup_{x\in [t-T,0]}|g_{n}(x)-e^{x}|\leq \delta,
	\end{align*}	
	which means for $n\geq N^{\delta}_{T-t}$
	\begin{align*}
		0<e^{t-T}-\delta \leq e^{x}-\delta \leq g_{n}(x)\leq \delta+e^{x}.
	\end{align*}
	Without loss of generality, we choose $\delta= \frac{1}{2}e^{t-T}$ and $N^{\varepsilon}_{T-t}\geq N^{\delta}_{T-t}$. Hence
	\begin{align*}
		\sum_{k=0}^{N^{\varepsilon}_{T-t}}e^{-(T-t)}\frac{(T-t)^{k}}{k!}(\Phi(1-\gamma)-\Phi(\gamma-1))^{k}>\frac{1}{2}e^{t-T}>0.
	\end{align*}	
	Consequently, 
	\begin{align*}
		det(\hat{B}(T-t))
		&=\sum_{k=0}^{N^{\varepsilon}_{T-t}}e^{-(T-t)}\frac{(T-t)^{k}}{k!}\cdot\sum_{k=0}^{N^{\varepsilon}_{T-t}}e^{-(T-t)}\frac{(T-t)^{k}}{k!} (y_{1}(k)-y_{2}(k))^{2}\\
		&\cdot \left(\sum_{k=0}^{N^{\varepsilon}_{T-t}}e^{-(T-t)}\frac{(T-t)^{k}}{k!}(y_{1}(k)- y_{2}(k))\right)^{2}>0,
	\end{align*}
	then $\hat{B}(T-t)$ is invertible, this completes the proof.
\end{proof}
\begin{proposition}\label{pro:4.10}
	Similar to what we have talked in proposition \ref{pro:4.7}, for the general system parameter $K$, there also admits the decomposition 
	\begin{align*}
		M^{*}_{i}(T-t,K)(\eta )&=M_{i}^{*1}(T-t,K)(\eta)+M_{i}^{*2}(T-t,K)(\eta),\\
		N^{*}_{i}(T-t,K)(\eta )&=N_{i}^{*1}(T-t,K)(\eta)+N_{i}^{*2}(T-t,K)(\eta),i=1,2,3,4.
	\end{align*}
	where $M_{i}^{*1}(T-t,K)(\eta)$ and $	N_{i}^{*1}(T-t,K)(\eta)$ only depend on the finite dimensional projection of configuration $\eta$. For arbitrary $\varepsilon >0$, $M_{i}^{*}(T-t,K)(\eta)$ has the same structure with $M_{i}^{*2}(T-t,K)(\eta)$, which means the elements in the $i$-th row and $i$-th column of  $M_{i}^{*}(T-t,K)(\eta)$ and $M_{i}^{*2}(T-t,K)(\eta)$ are all zeros. Besides, the elements of $M_{i}^{*2}(T-t,K)(\eta)$ and $N_{i}^{*2}(T-t,K)(\eta)$ satisfy
	\begin{align*}
		-\varepsilon \cdot I_{i} \leq M_{i}^{*2}(T-t,K)(\eta)&=Q_{i}B^{-1}M^{2}(T-t,K)(\eta)(Q_{i}B^{-1})^{T}\leq \varepsilon \cdot I_{i},\\
		-\varepsilon \cdot I_{i} \leq N_{i}^{*2}(T-t,K)(\eta)&=Q_{i}B^{-1}N^{2}(T-t,K)(\eta)(Q_{i}B^{-1})^{T}\leq \varepsilon \cdot I_{i},i=1,2,3,4.
	\end{align*}
	where $\varepsilon >0$ is independent to $K$ and $\eta$. 
	Further, in the case of $K=0$, for $\varepsilon >0$ sufficiently small, the following estimation holds
	$$M_{i}^{*1}(T-t,K=0)(\eta=S_{i})-\varepsilon I_{i}>0,i=1,2,3,4.$$  
\end{proposition}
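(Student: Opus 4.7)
The plan is to lift Proposition \ref{pro:4.7} through the change-of-basis $Q_i B^{-1}$ that appeared in Lemma \ref{lem:4.4}. Set $M_i^{*j} := Q_i B^{-1} M^j (Q_i B^{-1})^T$ and $N_i^{*j} := Q_i B^{-1} N^j (Q_i B^{-1})^T$ for $j = 1, 2$, so that linearity of conjugation immediately gives $M_i^* = M_i^{*1} + M_i^{*2}$ and likewise for $N_i^*$. The entrywise bounds $|M_{kl}^2| \leq 8\varepsilon'$ and $|N_{kl}^2| \leq 2\varepsilon'$ in Proposition \ref{pro:4.7} hold uniformly in $(K, \eta)$, while the operator norm of $Q_i B^{-1}$ is a finite quantity depending only on the truncation parameter $N_{T-t}^{\varepsilon'}$ (bounded, in the relevant form, via Lemma \ref{lem:4.9}). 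Since $\varepsilon'$ is at our disposal in Proposition \ref{pro:4.7}, we may arrange $\|Q_i B^{-1}\|_{\mathrm{op}}^2 \cdot \varepsilon' \leq \varepsilon$ for any prescribed $\varepsilon > 0$, yielding the uniform operator-norm bound $-\varepsilon I_i \leq M_i^{*2}, N_i^{*2} \leq \varepsilon I_i$.

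The heart of the argument is to show that the zero $i$-th row/column structure transfers to $M_i^{*1}$ and $M_i^{*2}$ \emph{separately}, not merely to their sum. The key observation is that the Poisson splitting $b_k(\omega) = b_k^1(\omega) + b_k^2(\omega)$ preserves the identity $\sum_k b_k^j(\omega) = c_j$, with $c_j$ independent of $\omega$ (being the partial or tail sum $\sum_{k \in S_j} e^{-(T-t)}(T-t)^k/k!$). Consequently $\sum_k a_k^j(z, \omega, \eta) = 0$ and $\sum_k (b_k^j(\omega) - b_k^j(\eta)) = 0$ for both $j = 1, 2$. Because the cancellations that produced the zero first row/column in $Q_1 M Q_1^T$ and $Q_1 N Q_1^T$ in Section \ref{sec:3.2} rely only on the constancy of $\sum_k b_k$ (and not on its value being $1$), the same computation applies verbatim to each piece; the extension to $i = 2, 3, 4$ is by the permutation relations of Lemma \ref{lem:4.4}. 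The property for $M_i^{*1}$ follows either directly by this argument or by subtraction.

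For the strict positivity claim at $K = 0$ with $\eta = S_i$, I would reason as follows. Theorem \ref{thm:4.6} yields $M_i^{*}(T-t, K=0)(\eta = S_i) \geq \rho \, N_i^{*}(T-t, K=0)(\eta = S_i)$ with $\rho > 0$, and the explicit formulas computed just before Lemma \ref{lem:4.5} reveal that the non-$i$-th block of $N_i^{*}(T-t, K=0)(\eta = S_i)$ is diagonal with strictly positive entries. Hence the non-$i$-th block of $M_i^{*}(T-t, K=0)(\eta = S_i)$ is strictly positive definite with a spectral gap $\lambda_0 > 0$ independent of the truncation. Combining this with the identity $M_i^{*1} = M_i^{*} - M_i^{*2}$ and the operator-norm bound $\|M_i^{*2}\|_{\mathrm{op}} \leq \varepsilon$ from the first paragraph, Weyl's eigenvalue inequality shows that the minimum eigenvalue of $M_i^{*1}$ on the non-$i$-th block is at least $\lambda_0 - \varepsilon$; taking $\varepsilon < \lambda_0/2$ gives $M_i^{*1} - \varepsilon I_i > 0$ on that block.

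The main obstacle I anticipate is the bookkeeping in the second paragraph: one must check carefully that every structural cancellation used to produce the vanishing row/column in Sections \ref{sec:3.2} and \ref{sec:4.1} depends only on the sum identities $\sum_k b_k^j = c_j$ and $\sum_k a_k^j = 0$, and not implicitly on $\sum_k b_k = 1$. Once this is confirmed, the Poisson splitting passes through cleanly and the remaining assertions reduce to perturbative spectral arguments built on the earlier lemmas.
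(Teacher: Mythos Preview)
Your treatment of the decomposition, the zero $i$-th row/column structure (via the constancy of $\sum_k b_k^j(\omega)$), and the $\varepsilon$-bounds on $M_i^{*2}$, $N_i^{*2}$ is correct and essentially the paper's argument; the paper likewise passes through the block form of $Q_i\hat B^{-1}Q_i^{-1}$ (as in Lemma~\ref{lem:4.4}) and combines Proposition~\ref{pro:4.7} with the bound $|\hat B(T-t)^{-1}_{ij}|=O(e^{T-t})$ of Lemma~\ref{lem:4.9}, using Gershgorin where you invoke an operator-norm inequality.

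Your positivity argument, however, has a gap. Theorem~\ref{thm:4.6} and the diagonal formula for $N_1^*$ before Lemma~\ref{lem:4.5} concern the \emph{discrete-time} matrices $M_i^*(n,K{=}0)$, $N_i^*(n,K{=}0)$; they do not give $M_i^*(T{-}t,K{=}0)\ge\rho\,N_i^*(T{-}t,K{=}0)$ or the diagonality of $N_i^*(T{-}t,K{=}0)$ in continuous time. The matrices $M,N$ are quadratic in the $b_k$'s, so they are not Poisson averages of their discrete analogues, and the starred transform conjugates by the \emph{truncated} $\hat B(T{-}t)$ whereas the full continuous-time $N(T{-}t,K{=}0)$ factors through the \emph{untruncated} Poisson sum. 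Thus neither premise you cite is available, and your detour through $M_i^*$ to reach $M_i^{*1}$ has no non-circular starting point.

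The paper avoids this by working with $M_i^{*1}$ directly: Lemma~\ref{lem:4.8} shows that $\hat B^{-1}M^1(\hat B^{-1})^T$ and $\hat B^{-1}N^1(\hat B^{-1})^T$ equal \emph{exactly} the constant matrices $D\,\mathrm{diag}\{p_1,\dots,p_{16}\}D^T-2\,\mathrm{diag}\{\tilde p_i\}+2\tilde p\tilde p^T$ and $\hat P_1$ from the discrete computation --- this is precisely why one conjugates by the truncated $\hat B$, so that it cancels the truncated sums defining $M^1,N^1$. Hence $M_1^{*1}(T{-}t,K{=}0)(S_1)$ is literally the matrix whose Fiedler value was shown positive in Theorem~\ref{thm:4.6}, and Lemma~\ref{lem:4.11} transfers this to $i=2,3,4$; no perturbative comparison with $M_i^*$ is needed.
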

\begin{proof}
	According to the expansion in proposition \ref{pro:4.7}
	\begin{align*}
		\sum_{l=1}^{4}b^{1}_{l}(\omega)=\sum_{k=0}^{N^{\varepsilon}_{T-t}}e^{-(T-t)}\frac{(T-t)^{k}}{k!}\sum_{l=1}^{4}p(k,\omega,S_{l})=\sum_{k=0}^{N^{\varepsilon}_{T-t}}e^{-(T-t)}\frac{(T-t)^{k}}{k!}.
	\end{align*}
	An argument similar to the one used in section  \ref{sec:3.2} shows that 
	\begin{align*}
		Q_{1}M^{1}(T-t,K)Q_{1}^{T}&=\left(\begin{array}{cccccccccc} 
			0&0&0&0\\
			0&\\
			0&&M^{1}_{rem,1}(T-t,K)\\
			0&\end{array}\right ),
	\end{align*}
	and
	\begin{align*}
		Q_{1}N^{1}(T-t,K)Q_{1}^{T}
		&=\left(\begin{array}{cccccccccc} 
			0&0&0&0\\
			0&\\
			0&&N^{1}_{rem,1}(T-t,K)\\
			0&\end{array}\right ).
	\end{align*}
	Define the following matrices
	\begin{align}\label{4.13}
		M_{i}^{*1}(T-t,K)&=Q_{i}\hat{B}(T-t)^{-1}M^{1}(T-t,K)(Q_{i}\hat{B}(T-t)^{-1})^{T},\nonumber\\
		M_{i}^{*2}(T-t,K)&=Q_{i}\hat{B}(T-t)^{-1}M^{2}(T-t,K)(Q_{i}\hat{B}(T-t)^{-1})^{T},\nonumber\\
		N_{i}^{*1}(T-t,K)&=Q_{i}\hat{B}(T-t)^{-1}N^{1}(T-t,K)(Q_{i}\hat{B}(T-t)^{-1})^{T},\\
		N_{i}^{*2}(T-t,K)&=Q_{i}\hat{B}(T-t)^{-1}N^{2}(T-t,K)(Q_{i}\hat{B}(T-t)^{-1})^{T},i=1,2,3,4.\nonumber
	\end{align}
	By the definition, $\hat{B}(T-t)$ shares the same structure with $B(n)$. As we have talked in Lemma \ref{lem:4.4}, the form of $Q_{1}\hat{B}^{-1}(T-t)Q_{1}^{-1}$ is also equation ~\eqref{eq:4.6}~, then
	\begin{align*}
		M_{1}^{*1}(T-t,K)&=\left(\begin{array}{cccccccccc} 
			0&0&0&0\\
			0&\\
			0&&M_{rem,1}^{*1}(T-t,K)\\
			0&\end{array}\right ),\\
		N_{1}^{*1}(T-t,K)&=\left(\begin{array}{cccccccccc} 
			0&0&0&0\\
			0&\\
			0&&N_{rem,1}^{*1}(T-t,K)\\
			0&\end{array}\right ).
	\end{align*}
	that is to say, the elements in first row and first column of $M_{1}^{*1}(T-t,K)$ and $N_{1}^{*1}(T-t,K)$ are all zeros. The results that elements in $i$-th row and $i$-th column of $M_{i}^{*1}(T-t,K)$ and $N_{i}^{*1}(T-t,K)$ are all zeros for $i=1,2,3,4$ can be proved in the same way. Obviously, according to \eqref{eq:3.5}, the elements in $i$-th row and $i$-th column of $M_{i}^{*}(T-t,K)$ and $N_{i}^{*}(T-t,K)$ are all zeros by the similar argument in Lemma \ref{lem:4.4}.
	
	Hence, $M_{i}^{*1}(T-t,K)(\eta)$ shares  the same structure with $M_{i}^{*}(T-t,K)(\eta)$, and $N_{i}^{*1}(T-t,K)(\eta)$ shares  the same structure with  $N_{i}^{*}(T-t,K)(\eta)$. Thanks to
	\begin{align*}
		M^{*2}_{i}(T-t,K)(\eta )&=M_{i}^{*}(T-t,K)(\eta)-M_{i}^{*1}(T-t,K)(\eta),\\
		N^{*2}_{i}(T-t,K)(\eta )&=N_{i}^{*}(T-t,K)(\eta)-N_{i}^{*1}(T-t,K)(\eta),i=1,2,3,4.
	\end{align*}
	we know that the $i$-th  row and $i$-th  column of $M_{i}^{*2}(T-t,K)(\eta)$ and $N_{i}^{*2}(T-t,K)(\eta)$ are all zeros.
	
	The next thing to do in the proof is to control the terms $M_{i}^{*2}(T-t,K)(\eta)$ and  $N_{i}^{*2}(T-t,K)(\eta),i=1,2,3,4$. According to proposition \ref{pro:4.7}, for given $T-t, t\in[0,T]$ and $\forall \varepsilon >0$, choosing $\varepsilon(T-t)=e^{2(t-T)} \cdot \varepsilon >0$, there exists integer $N^{\varepsilon(T-t)}_{T-t}$ large enough such that
	$$
	|M_{ij}^{2}(T-t,K)(\eta)|\leq 8e^{2(t-T)} \cdot \varepsilon,
	|N_{ij}^{2}(T-t,K)(\eta)|\leq 2e^{2(t-T)} \cdot \varepsilon,$$ 
	By the definition of $\hat{B}(T-t)$ in Lemma \ref{lem:4.9}, $|\hat{B}(T-t)_{ij}^{-1}|\leq C(T-t)=O(e^{T-t})$. Hence
	\begin{align*}
		\vert (M_{i}^{*2}(T-t,K)(\eta))_{kl}\vert&=\vert(Q_{i}B^{-1}M^{2}(T-t,K)(\eta)(Q_{i}B^{-1})^{T})_{kl}\vert\leq C_{1}\varepsilon,\\
		\vert(N_{i}^{*2}(T-t,K)(\eta))_{kl}\vert&=\vert(Q_{i}B^{-1}N^{2}(T-t,K)(\eta)(Q_{i}B^{-1})^{T})_{kl}\vert\leq C_{2}\varepsilon,i=1,2,3,4.
	\end{align*}
	where $C_{1}$, $C_{2}$ are constants. Using the Gersgorin theorem, there exists a constant $C$ which is independent  to $T$ and $\eta$, such that for arbitrary $\varepsilon >0$
	\begin{align*}
		-C\varepsilon \cdot I \leq & M_{i}^{*2}(T-t,K)(\eta)\leq C\varepsilon \cdot I,\\
		-C\varepsilon \cdot I \leq & N_{i}^{*2}(T-t,K)(\eta)\leq C\varepsilon \cdot I,i=1,2,3,4.
	\end{align*}
	Recall that elements in $i$-th row and $i$-th column of $M_{i}^{*2}(T-t,K)(\eta)$  and $N_{i}^{*2}(T-t,K)(\eta)$ are all zeros, we introduce  the following matrices
	\begin{align*}
		I_{1}&=\left(\begin{array}{cccccccccc} 
			0 & 0& 0 &0\\
			0 & 1& 0 &0\\
			0 & 0& 1 & 0\\
			0 & 0& 0 & 1\end{array}\right ),I_{2}=\left(\begin{array}{cccccccccc} 
			1 & 0& 0 &0\\
			0 & 0& 0 &0\\
			0 & 0& 1 & 0\\
			0 & 0& 0 & 1\end{array}\right ),
		I_{3}=\left(\begin{array}{cccccccccc} 
			1 & 0& 0 &0\\
			0 & 1& 0 &0\\
			0 & 0& 0 & 0\\
			0 & 0& 0 & 1\end{array}\right ),	I_{4}=\left(\begin{array}{cccccccccc} 
			1 & 0& 0 &0\\
			0 & 1& 0 &0\\
			0 & 0& 1 & 0\\
			0 & 0& 0 & 0\end{array}\right ).
	\end{align*} 
	Since $\varepsilon >0$ is arbitrary, without loss of generality, we have
	\begin{align*}
		-\varepsilon \cdot I_{i} \leq M_{i}^{*2}(T-t,K)(\eta)\leq \varepsilon \cdot I_{i},\
		-\varepsilon \cdot I_{i} \leq N_{i}^{*2}(T-t,K)(\eta)\leq \varepsilon \cdot I_{i},i=1,2,3,4.
	\end{align*}
	
	Finally, we consider the case $K=0$. Thanks to Lemma \ref{lem:4.8} 
	\begin{align*}
		M_{1}^{*1}(T-t,K=0)(\eta=S_{1})&=Q_{1}\hat{B}(T-t)^{-1}M^{1}(T-t,K=0)(\eta=S_{1})(Q_{1}\hat{B}(T-t)^{-1})^{T}\\
		&=Q_{1}D \cdot  diag\{p_{1},p_{2},\cdots,p_{16}\}\cdot D^{T}Q_{1}^{T}\nonumber\\
		&-2Q_{1}\cdot diag\{\tilde{p}_{1},\tilde{p}_{2},\tilde{p}_{3},\tilde{p}_{4}\}Q_{1}^{T} +2Q_{1} \tilde{p}\cdot \tilde{p}^{T}Q_{1}^{T},
	\end{align*}
	By the calculation in theorem \ref{thm:4.6}, we know that 
	$$0=\lambda^{M_{1}^{*1}(T-t,K=0)(\eta=S_{1})}_{1} <\lambda^{M_{1}^{*1}(T-t,K=0)(\eta=S_{1})}_{2}\leq\lambda^{M_{1}^{*1}(T-t,K=0)(\eta=S_{1})}_{3}\leq\lambda^{M_{1}^{*1}(T-t,K=0)(\eta=S_{1})}_{4},$$
	and
	$$0=\lambda^{N_{1}^{*1}(T-t,K=0)(\eta=S_{1})}_{1}<\lambda^{N_{1}^{*1}(T-t,K=0)(\eta=S_{1})}_{2}\leq\lambda^{N_{1}^{*1}(T-t,K=0)(\eta=S_{1})}_{3}\leq\lambda^{N_{1}^{*1}(T-t,K=0)(\eta=S_{1})}_{4}.$$
	Because of the fact that matrices which are similar to each other share the comment eigenvalues, for $i=2,3,4$ , according to Lemma \ref{lem:4.11} below, we obtain
	$$0=\lambda^{M_{i}^{*1}(T-t,K=0)(\eta=S_{i})}_{1} <\lambda^{M_{i}^{*1}(T-t,K=0)(\eta=S_{i})}_{2}\leq\lambda^{M_{i}^{*1}(T-t,K=0)(\eta=S_{i})}_{3}\leq\lambda^{M_{i}^{*1}(T-t,K=0)(\eta=S_{i})}_{4}.$$
	and
	$$0=\lambda^{N_{i}^{*1}(T-t,K=0)(\eta=S_{i})}_{1}<\lambda^{N_{i}^{*1}(T-t,K=0)(\eta=S_{i})}_{2}\leq\lambda^{N_{i}^{*1}(T-t,K=0)(\eta=S_{i})}_{3}\leq\lambda^{N_{i}^{*1}(T-t,K=0)(\eta=S_{i})}_{4}.$$
	Consequently, there exists  $\varepsilon >0$ sufficiently small, such that
	$$M_{i}^{*1}(T-t,K=0)(\eta=S_{i})-\varepsilon I_{i}>0, \ ie.\lambda^{M_{i}^{*1}(T-t,K=0)(\eta=S_{i})-\varepsilon I_{i}}_{2}>0.$$  
	We have thus proved the proposition.
\end{proof}
\begin{lemma}\label{lem:4.11}
	In the case of $K=0$, there exists the following relationship between the matrices $\{N^{*1}_{i},
	\	M^{*1}_{i}\}_{i=1,2,3,4}$ defined in proposition \ref{pro:4.10} and the initial datas $\{\eta=S_{i}\}_{i=1,2,3,4}$
	\begin{align*}
		N^{*1}_{i}(T-t,K=0)(\eta=S_{i})&=P_{1i}P_{jk}N^{*1}_{1}(T-t,K=0)(\eta=S_{i})P_{jk}P_{1i},\\
		M^{*1}_{i}(T-t,K=0)(\eta=S_{i})&=P_{1i}P_{jk}M^{*1}_{1}(T-t,K=0)(\eta=S_{i})P_{jk}P_{1i} .
	\end{align*}	
	where $i,j,k=2,3,4$ and is not equal to each other. Furthermore, for $i=2,3,4$ and arbitrary  $c\in \mathbb{R}$, $N^{*1}_{i}(T-t,K=0)(\eta=S_{i})-cI_{i}$ is similar to $N^{*1}_{1}(T-t,K=0)(\eta=S_{1})-cI_{1}$, and $M^{*1}_{i}(T-t,K=0)(\eta=S_{i})-cI_{i}$  is similar to $M^{*1}_{1}(T-t,K=0)(\eta=S_{1})-cI_{1}$.
\end{lemma}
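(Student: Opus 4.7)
The proof plan is to adapt the argument of Lemma 4.5 to the truncated matrices $M^{*1}_i$ and $N^{*1}_i$, exploiting the fact that $\hat{B}(T-t)$ is merely a Poisson-weighted convex combination of the powers $B(k)$, so every symmetry enjoyed by $B(n)$ with respect to permutations of the basis states $\{S_1,S_2,S_3,S_4\}$ is inherited linearly by $\hat{B}(T-t)$. Concretely, for $i\in\{2,3,4\}$ and $\{j,k\}=\{2,3,4\}\setminus\{i\}$, I will start from the representation in equation~\eqref{eq:4.12} together with the corresponding expression for $N^{1}$ from the proof of Lemma~\ref{lem:4.8}, replace the initial datum $S_1$ by $S_i$, and use Lemma~\ref{lem:4.1} to re-express the diagonal matrices $P_i$ and $\tilde P_i$ in terms of $P_1$, $\tilde P_1$, conjugated by $P_{1i}P_{jk}$.

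First I would write, in analogy with the computation that produced $N^*_3(n,K=0)(\eta=S_3)$ in Lemma~\ref{lem:4.5},
\[
\hat B(T-t)^{-1}N^{1}(T-t,K=0)(\eta=S_i)(\hat B(T-t)^{T})^{-1}
= P_{1i}P_{jk}\,\hat P_{1}\,P_{jk}P_{1i},
\]
and the analogous identity for $M^{1}$, using Lemma~\ref{lem:4.1} to pull the permutations out of the diagonal blocks $\mathrm{diag}\{\tilde p_{1},\ldots,\tilde p_{4}\}$, $\mathrm{diag}\{p_{1},\ldots,p_{16}\}$, and the rank-one piece $\tilde p\,\tilde p^T$. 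Then, noting that $Q_i = P_{1i}P_{jk}\,Q_1\,P_{jk}P_{1i}$ when acting on the appropriate reduced subspace (which one verifies by direct matrix multiplication, since $P_{1i}$ swaps the first and $i$-th coordinates and $P_{jk}$ is an involution on the remaining two), conjugation of the definitions in~\eqref{4.13} gives
\[
N^{*1}_{i}(T-t,K=0)(\eta=S_{i})=P_{1i}P_{jk}\,N^{*1}_{1}(T-t,K=0)(\eta=S_{1})\,P_{jk}P_{1i},
\]
and the same for $M^{*1}_{i}$, which is the first assertion.

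For the second assertion I would observe that $I_1=\mathrm{diag}(0,1,1,1)$ and conjugating by the involution $P_{1i}P_{jk}$ simply permutes the diagonal entries, sending the single zero from position $(1,1)$ to position $(i,i)$ because $P_{jk}$ only exchanges two of the entries that are already equal to $1$. Hence
\[
P_{1i}P_{jk}\,I_{1}\,P_{jk}P_{1i}=I_{i},
\]
so subtracting $cI_1$ before conjugation is the same as subtracting $cI_i$ after, and the similarity relation for $N^{*1}_i - cI_i$ (respectively $M^{*1}_i - cI_i$) follows immediately from the similarity just established.

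The main obstacle I expect is purely bookkeeping: one has to keep careful track of how the index triples $(i,j,k)$ act on both the $4\times4$ layer of block-spin labels and on the $16$-dimensional index set associated with $\mathbb{A}(z,\omega,\eta)$, and verify that the three building blocks appearing in~\eqref{eq:4.12} (the $\hat A$-quadratic form, the $\hat B$-quadratic form weighted by $\tilde p_\ell$, and the outer product $\tilde p\tilde p^T$) transform consistently under the same permutation $P_{1i}P_{jk}$. Once the Lemma~\ref{lem:4.1} identities for $P_i$, $\tilde P_i$ and the analogous relation for $D$, $\tilde p$, $\mathrm{diag}\{p_1,\ldots,p_{16}\}$ are in place, the algebra is a direct transcription of the proof of Lemma~\ref{lem:4.5}, with $\hat B(T-t)$ replacing $B(n)$ throughout.
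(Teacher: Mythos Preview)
Your proposal is correct and follows essentially the same route as the paper's own proof: both arguments transplant the computation of Lemma~\ref{lem:4.5} from $B(n)$ to $\hat B(T-t)$, invoke Lemma~\ref{lem:4.1} and the representation~\eqref{eq:4.12} to pull the permutation $P_{1i}P_{jk}$ through the three building blocks, and then use $I_i=P_{1i}P_{jk}I_1P_{jk}P_{1i}$ for the ``$-cI_i$'' part. Your identity $Q_i=P_{1i}P_{jk}Q_1P_{jk}P_{1i}$ is equivalent to the paper's $Q_i=P_{1i}Q_1P_{1i}$ since $P_{jk}Q_1P_{jk}=Q_1$ for $j,k\in\{2,3,4\}$, so no qualification about a ``reduced subspace'' is needed.
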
	
\begin{proof}
	Without loss of generality, consider the case $\eta =S_{3}$, the argument for case $i=2,4$ are same. Noticed that
	\begin{align*}
		\hat{B}(T-t)^{-1}N^{1}(T-t,K=&0)(\eta=S_{3})(\hat{B}(T-t)^{T})^{-1}\\
		=&P_{13} D_{1}P_{13} P_{13}P_{24}diag\{\tilde{p}_{1},\tilde{p}_{2},\tilde{p}_{3},\tilde{p}_{4}\}P_{24} P_{13}P_{13}D_{1}^{T}P_{13}.
	\end{align*}	
	Hence
	\begin{align*}
		N^{*1}_{3}(T-t,K=0)(\eta=S_{3})&=Q_{3}\hat{B}(T-t)^{-1}N^{1}(T-t,K=0)(\eta=S_{3})(\hat{B}(T-t)^{T})^{-1}Q^{T}_{3}\\
		&=P_{13}Q_{1}P_{13}\hat{B}(T-t)^{-1}N^{1}(T-t,K=0)(\eta=S_{3})(\hat{B}(T-t)^{T})^{-1}P_{13}Q^{T}_{1}P_{13}\\
		&=diag\{\tilde{p}_{3},\tilde{p}_{4},0,\tilde{p}_{2}\}=P_{13}P_{24}N^{*1}_{1}(T-t,K=0)(\eta=S_{1})P_{24}P_{13}.
	\end{align*}
	Similarly, due to definition \eqref{4.13}
	\begin{align*}
		M^{*1}_{3}(T-t,K=0)(\eta=S_{3})&=Q_{3}\hat{B}(T-t)^{-1}M^{1}(T-t,K=0)(\eta=S_{3})(\hat{B}(T-t)^{T})^{-1}Q^{T}_{3},
	\end{align*}
	Thanks to Lemma \ref{lem:4.1} and equation \eqref{eq:4.12}, an argument similar to the one used in Lemma \ref{lem:4.5} shows that
	\begin{align*}
		M^{*1}_{3}(T-t,K=0)(\eta=S_{3})&=P_{13}P_{24}M_{1}^{*1}(T-t,K=0)(\eta=S_{1})P_{13}P_{24}.
	\end{align*}
	Finally, owing to
	\begin{align*}
		I_{k}&=P_{1k}P_{ij}I_{1}P_{1k}P_{ij},
	\end{align*}	
	Hence
	\begin{align*}
		N^{*1}_{3}(T-t,K=0)(\eta=S_{3})-c I_{k}&=P_{13}P_{24}(N_{1}^{*1}(T-t,K=0)(\eta=S_{1})-c I_{1})P_{13}P_{24},\\
		M^{*1}_{3}(T-t,K=0)(\eta=S_{3})-c I_{k}&=P_{13}P_{24}(M_{1}^{*1}(T-t,K=0)(\eta=S_{1})-c I_{1})P_{13}P_{24}.
	\end{align*}
	This completes the proof of Lemma \ref{lem:4.11}.
\end{proof}

\begin{theorem}\label{thm:5.12}
	Consider a class of two-points function $f$, let $F(t,\eta)=P_{T-t}f(\eta)$. For any time $T-t$, there exists a constant $\rho(T-t,K)>0$ which is independent to $f$ and initial data $\eta$, such that 
	\begin{align}\label{eq:13}
		\Gamma_{2}(F,F)(\eta)\geq \frac{\rho(T-t,K)}{2}\Gamma(F,F)(\eta),
	\end{align}
	where $\eta\in\{+1,-1\}^{\mathbb{Z}}$. $\rho(T-t,K)$ is continuous about $K$ and $\rho(T-t,K=0)=\rho$. Furthermore, coupling system parameter $K$ with time parameter $t$ which satisfies 
	$$K_{t}\rightarrow K^{*}= 0, \ \text{and} \ \rho(t,K_{t})\rightarrow \rho$$
	as $t\rightarrow \infty$. Then there exists $T^{*}$ large enough, such that for $T\geq T^{*}$ , the local Poincar\'e inequality holds for any two-points function $f$
	\begin{align*}
		\mathcal{P}_{T}\left(f^{2}\right)(\eta)-(\mathcal{P}_{T}\left(f\right))^{2}(\eta) 
		&\leq 2\int_{0}^{T^{*}}e^{-\int_{0}^{t}\rho(s,K_{s}) ds}dt\cdot \mathcal{P}_{T}\left(\Gamma(f,f)\right)(\eta)\\
		+&2(\frac{2}{\rho}-\frac{2}{\rho}e^{\frac{-\rho(T-T^{*})}{2}})e^{-\int_{0}^{T^{*}}\rho(s,K_{s}) ds}\cdot \mathcal{P}_{T}\left(\Gamma(f,f)\right)(\eta),
	\end{align*} 
	where $\eta\in E$ is the initial data. Finally, if the semi-group $\mathcal{P}_{t}$ is ergodic in the sense that
	\begin{align*}
		\mu(f)=\lim\limits_{T\rightarrow \infty}(\mathcal{P}_{T}f)(\eta),
	\end{align*}
	where $\mu$ denote the probability measure and $\eta$ is an arbitrary initial data. Then,  as the system time  $T$ goes to $ +\infty$, the  inequality for limit measure $\mu$ follows
	\begin{align}\label{eq:4.12.1}
		\mu\left(f^{2}\right)-(\mu\left(f\right))^{2}\leq \frac{2}{\rho} \mu\left(\Gamma(f,f)\right).
	\end{align}
\end{theorem}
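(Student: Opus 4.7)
\textbf{Proof plan for Theorem \ref{thm:5.12}.} The plan is to reduce the curvature inequality to a finite-dimensional matrix positivity, verify it at $K=0$, and then propagate by continuity in $K$ before invoking the machinery of Theorem \ref{thm:3.2}. Writing $F(t,\eta)=\mathcal{P}_{T-t}f(\eta)=(\vec{f})^{T}\cdot\mathbb{B}(\eta)$ and applying the transformation $Q_{i}\hat{B}(T-t)^{-1}$ for the initial datum $\eta=S_{i}$, the desired bound $\Gamma_{2}(F,F)(\eta)\geq\tfrac{\rho(T-t,K)}{2}\Gamma(F,F)(\eta)$ is equivalent, via the representation \eqref{eq:3.3}-\eqref{eq:3.4}, to $M^{*}_{i}(T-t,K)(\eta)\geq\rho(T-t,K)\,N^{*}_{i}(T-t,K)(\eta)$ for each $i=1,2,3,4$. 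I would then invoke the decomposition of Proposition \ref{pro:4.10} to split $M^{*}_{i}=M^{*1}_{i}+M^{*2}_{i}$ and $N^{*}_{i}=N^{*1}_{i}+N^{*2}_{i}$, so that the analysis is reduced to the finitely supported pieces $M^{*1}_{i}$, $N^{*1}_{i}$ plus controllable perturbations bounded by $\pm\varepsilon I_{i}$.

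Next I would handle the base case $K=0$, $\eta=S_{i}$. Here Lemma \ref{lem:4.8} gives the explicit closed form of $\hat{B}(T-t)^{-1}M^{*1}_{1}(\hat{B}(T-t)^{T})^{-1}$ and the analogous expression for $N^{*1}_{1}$; repeating the Fiedler-value computation that underlies Theorem \ref{thm:4.6}, and using Lemma \ref{lem:4.11} to transfer the estimate from $\eta=S_{1}$ to every $\eta=S_{i}$, one obtains $\lambda^{M^{*1}_{i}-\varepsilon I_{i}}_{2}>0$ and $\lambda^{N^{*1}_{i}}_{2}>0$ for sufficiently small $\varepsilon$, hence a positive constant $\tilde{\rho}$, independent of $i$, for which $M^{*1}_{i}(T-t,0)(S_{i})-\varepsilon I_{i}\geq\tilde{\rho}\bigl(N^{*1}_{i}(T-t,0)(S_{i})+\varepsilon I_{i}\bigr)$. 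Absorbing the $\pm\varepsilon I_{i}$ errors from $M^{*2}_{i},N^{*2}_{i}$ into this inequality then yields $M^{*}_{i}(T-t,0)(S_{i})\geq\rho\,N^{*}_{i}(T-t,0)(S_{i})$ with a strictly positive $\rho=\rho(T-t,K=0)$ that does not depend on $\eta$. Because every coordinate of $\eta$ outside $\{x_{1},x_{2}\}$ enters $M^{*1}_{i},N^{*1}_{i}$ only through the finite-support truncation, this extends to every $\eta\in E$ by the same argument applied with $\eta$ replacing $S_{i}$.

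To pass to general $K$, I would observe that the truncated transition probabilities $p(k,\omega,S_{l})$ are, for each finite $k$, polynomial functions of $\Phi(\cdot)$ evaluated at linear combinations involving $K$, hence continuous in $K$. Consequently the entries of the matrices $M^{*1}_{i}(T-t,K)(\eta)$ and $N^{*1}_{i}(T-t,K)(\eta)$ are continuous in $K$ uniformly in the finitely many relevant coordinates of $\eta$, and the Fiedler-value estimate survives on an open neighbourhood of $K=0$. This produces a continuous function $K\mapsto\rho(T-t,K)>0$ with $\rho(T-t,0)=\rho$, which is exactly statement \eqref{eq:13}. I expect the main obstacle to be here: controlling the lowest positive eigenvalues of $M^{*1}_{i}$ and $N^{*1}_{i}$ uniformly in $K$ requires that the rank-drop structure exhibited in Lemma \ref{lem:4.4} be stable under the $K$-perturbation, which amounts to checking that $\hat{B}(T-t)$ remains invertible with an inverse controlled as in Lemma \ref{lem:4.9}, so that the quadratic equation for $\lambda_{2}^{M^{*1}_{i}}$ still yields a strictly positive smallest nontrivial root.

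Finally, assuming the coupling $K_{t}\to K^{*}=0$ with $\rho(t,K_{t})\to\rho$, the proof of Theorem \ref{thm:3.2} applies verbatim with $\rho(T-s,K)$ replaced by the time-dependent $\rho(s,K_{s})$: differentiating $\Lambda(t,\eta)=\mathcal{P}_{t}(F^{2})(\eta)$ twice gives $\Lambda''\geq\rho(T-t,K_{T-t})\Lambda'$, integration yields
\begin{equation*}
\mathcal{P}_{T}(f^{2})(\eta)-(\mathcal{P}_{T}f)^{2}(\eta)\leq 2\int_{0}^{T}e^{-\int_{t}^{T}\rho(T-s,K_{T-s})\,ds}dt\cdot\mathcal{P}_{T}(\Gamma(f,f))(\eta),
\end{equation*}
and splitting the time interval at $T^{*}$ where $\rho(t,K_{t})\geq\rho/2$ gives the stated finite-$T$ estimate. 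Letting $T\to\infty$ under the ergodic assumption, applying Lemma \ref{lem:A1} to the decaying tail $h(t)=e^{-\int_{0}^{t}\rho(s,K_{s})\,ds}$ exactly as in Theorem \ref{thm:3.2}, and noting that $e^{-\int_{0}^{T^{*}}\rho(s,K_{s})ds}\to 0$ as $T^{*}\to\infty$, completes the passage to \eqref{eq:4.12.1}.
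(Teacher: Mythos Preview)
Your proposal is correct and follows essentially the same approach as the paper: reduce \eqref{eq:13} to the matrix inequality $M_i^{*}\geq\rho N_i^{*}$ via the $Q_i\hat B(T-t)^{-1}$ transform, split off the finitely supported pieces $M_i^{*1},N_i^{*1}$ using Proposition~\ref{pro:4.10}, verify positivity at $K=0$ through the explicit eigenvalue computation and Lemma~\ref{lem:4.11}, propagate by continuity in $K$ over the finitely many relevant $\eta$-projections, and then run the integration argument of Theorem~\ref{thm:3.2} with the coupling $K_t\to0$. The only slip is that the denominator in your curvature ratio should be the \emph{largest} nonzero eigenvalue $\lambda_4^{N_i^{*1}+\varepsilon I_i}$ rather than $\lambda_2^{N_i^{*1}}$, since one needs $M_i^{*1}-\varepsilon I_i\geq\rho\,(N_i^{*1}+\varepsilon I_i)$ to hold as a matrix inequality; the paper defines $\rho(T-t,K)$ as $\min_{i}\min_{\eta}\lambda_2^{M_i^{*1}-\varepsilon I_i}/\lambda_4^{N_i^{*1}+\varepsilon I_i}$, which is what your argument implicitly requires.
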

\begin{proof}
	Let $\pi_{x}(\eta)$  be the projection at coordinate $x$ of configuration $\eta$, which means $\pi_{x}(\eta)=\eta(x)$ $x\in \mathbb{Z}$. Dividing the configuration space into four classes according to spatial  coordinate $x_{1}$ and $x_{2}$ 
	\begin{align*}
		\bar{\eta}_{1}&=\{\eta \in \{+1,-1\}^{Z}|\pi_{x_{1}}(\eta)=\eta(x_{1})=1,\pi_{x_{2}}(\eta)=\eta(x_{2})=1\},\\
		\bar{\eta}_{2}&=\{\eta \in \{+1,-1\}^{Z}|\pi_{x_{1}}(\eta)=\eta(x_{1})=-1,\pi_{x_{2}}(\eta)=\eta(x_{2})=1\},\\
		\bar{\eta}_{3}&=\{\eta \in \{+1,-1\}^{Z}|\pi_{x_{1}}(\eta)=\eta(x_{1})=1,\pi_{x_{2}}(\eta)=\eta(x_{2})=-1\},\\
		\bar{\eta}_{4}&=\{\eta \in \{+1,-1\}^{Z}|\pi_{x_{1}}(\eta)=\eta(x_{1})=-1,\pi_{x_{2}}(\eta)=\eta(x_{2})=-1\}.
	\end{align*}
	Denote
	$$\rho_{i}(T-t,K)(\eta)=\frac{\lambda^{M_{i}^{*1}(T-t,K)(\eta)-\varepsilon I_{i}}_{2}}{\lambda^{N_{i}^{*1}(T-t,K)(\eta)+\varepsilon I_{i}}_{4}},\eta\in \bar{\eta_{i}},t\in[0,T],$$
	where
	\begin{align*}
		I_{1}=diag\{0,1,1,1\}, I_{2}=diag\{1,0,1,1\},
		I_{3}=diag\{1,1,0,1\}, I_{4}=diag\{1,1,1,0\}.
	\end{align*}
	Specifically, according to proposition \ref{pro:4.10}, under the circumstance of $K=0$, for $\varepsilon>0 $ sufficiently small, we have
	$$\rho=\frac{\lambda^{M_{1}^{*1}(T-t,K=0)(\eta=S_{1})-\varepsilon I_{1}}_{2}}{\lambda^{N_{1}^{*1}(T-t,K=0)(\eta=S_{1})+\varepsilon I_{1}}_{4}}=\frac{\lambda^{M_{i}^{*1}(T-t,K=0)(\eta=S_{i})-\varepsilon I_{i}}_{2}}{\lambda^{N_{i}^{*1}(T-t,K=0)(\eta=S_{i})+\varepsilon I_{i}}_{4}}>0.$$  
	Given the fixed time $T-t$ and initial data $\eta$, elements of $M^{1}(T-t,K)(\eta)$ and $N^{1}(T-t,K)(\eta)$ are continuous about $K$, because
	\begin{align*}
		M_{i}^{*1}(T-t,K)(\eta)&=Q_{i}\hat{B}(T-t)^{-1}M^{1}(T-t,K)(\eta)(Q_{i}\hat{B}(T-t)^{-1})^{T},\\
		N_{i}^{*1}(T-t,K)(\eta)&=Q_{i}\hat{B}(T-t)^{-1}N^{1}(T-t,K)(\eta)(Q_{i}\hat{B}(T-t)^{-1})^{T}.
	\end{align*}
	Hence, the eigenvalues of $M_{i}^{*1}(T-t,K)(\eta)$ and $N_{i}^{*1}(T-t,K)(\eta)$ are continuous about $K$. Define
	\begin{align}\label{eq:4.14}
		\rho(T-t,K):= \min_{i=1,2,3,4}  \min_{\eta \in E}  \rho_{i}(T-t,K)(\eta).
	\end{align}
	According to proposition \ref{pro:4.10},  $M_{i}^{*1}(T-t,K)(\eta)$ and $N_{i}^{*1}(T-t,K)(\eta)$ only depend on the finite dimensional projection of configuration $\eta$, so there are only finite terms to choose the minimum one in ~\eqref{eq:4.14}~. Hence, for fixed time $T-t$, $\rho(T-t,K)$ is continuous about $K$, and obviously, $\rho(T-t,K=0)=\rho$.
	
	Strictly speaking, for arbitrary time $T-t$, there exists $\varepsilon>0 $ small enough and  $0< \delta_{T-t}$ such that for $\forall \ |K|<\delta_{T-t}$ 
	$$\rho(T-t,K)\geq \frac{\rho}{2}>0.$$
	Because
	\begin{align*}
		M_{i}^{*}(T-t,K)(\eta)-\rho(T-t,K) N_{i}^{*}(T-t,K)(\eta)
		&=M_{i}^{*1}(T-t,K)(\eta)-\rho(T-t,K) N_{i}^{*1}(T-t,K)(\eta)\\
		&+M_{i}^{*2}(T-t,K)(\eta)-\rho(T-t,K) N_{i}^{*2}(T-t,K)(\eta).
	\end{align*}
	and according to proposition \ref{pro:4.10}
	\begin{align*}
		-\varepsilon \cdot I_{i}\leq M_{i}^{*2}(T-t,K)(\eta)&\leq \varepsilon \cdot I_{i},\\
		-\varepsilon \cdot I_{i}\leq N_{i}^{*2}(T-t,K)(\eta)&\leq \varepsilon \cdot I_{i},i=1,2,3,4.
	\end{align*}
	Then
	\begin{align*}
		M_{i}^{*}(T-t,K)(\eta)&-\rho(T-t,K) N_{i}^{*}(T-t,K)(\eta)\\
		&\geq M_{i}^{*1}(T-t,K)(\eta)-\varepsilon \cdot I_{i}-\rho(T-t,K) N_{i}^{*1}(T-t,K)(\eta)-\rho(T-t,K)\varepsilon \cdot I_{i}\\
		&= M_{i}^{*1}(T-t,K)-\varepsilon \cdot I_{i}-\rho(T-t,K) (N_{i}^{*1}(T-t,K)(\eta)+\varepsilon \cdot I_{i}).
	\end{align*}
	By the definition of $\rho(T-t,K)$, we can know
	\begin{align*}
		\rho(T-t,K)\leq  \rho_{i}(T-t,K)(\eta)=\frac{\lambda^{M_{i}^{*1}(T-t,K)(\eta)-\varepsilon I_{i}}_{2}}{\lambda^{N_{i}^{*1}(T-t,K)(\eta)+\varepsilon I_{i}}_{4}}, \ \forall \eta \in E,\ i=1,2,3,4.
	\end{align*}
	Hence
	\begin{align*}
		M_{i}^{*}(T-t,K)(\eta)-\rho(T-t,K) N_{i}^{*}(T-t,K)(\eta)\geq0.
	\end{align*}
	Finally
	\begin{align*}
		0&\leq \hat{B}(T-t)Q_{i}^{-1}(M_{i}^{*}(T-t,K)(\eta)-\rho(T-t,K) N_{i}^{*}(T-t,K)(\eta))(\hat{B}(T-t)Q_{i}^{-1})^{T}\\
		&=\hat{B}(T-t)Q_{i}^{-1}Q_{i}\hat{B}(T-t)^{-1} \cdot (M(T-t,K)(\eta)-\rho(T-t,K)N(T-t,K)(\eta))\\
		&\cdot (Q_{i}\hat{B}(T-t)^{-1})^{T}(\hat{B}(T-t)Q_{i}^{-1})^{T}\\
		&=M(T-t,K)(\eta)-\rho(T-t,K) N(T-t,K)(\eta).
	\end{align*}
	that is to say, for arbitrary  $T-t$, there exists $0< \delta_{T-t}$ such that for  $\forall \ |K|<\delta_{T-t}$  
	\begin{align}\label{eq:4.15}
		4\Gamma_{2}(F,F)(\eta)\geq 2\rho(T-t,K)\Gamma(F,F)(\eta) ,
	\end{align}
	where $\rho(T-t,K)$ is independent to $\eta$.
	Then, according to Theorem \ref{thm:3.2}, the local Poincar\'e inequality follows
	\begin{align*}
		\mathcal{P}_{T}\left(f^{2}\right)(\eta)-(\mathcal{P}_{T}\left(f\right))^{2}(\eta) \leq 2\int_{0}^{T}e^{-\int_{t}^{T}\rho(T-s,K) ds}dt\cdot \mathcal{P}_{T}\left(\Gamma(f,f)\right)(\eta).
	\end{align*}	
	Furthermore, noticed that for any fixed time $T-t,t\in [0,T]$, $\rho(T-t,K)$ is continuous about $K$ at around $K=0$, which means we can choose a coupling relationship between model parameter $K$ and time parameter $t$, such that $K_{t}\rightarrow 0$ and $\rho(t,K_{t})\rightarrow \rho$ as $t\rightarrow \infty$. Hence, for arbitrary $\delta >0$, there exists  $T^{*}$ sufficiently large, such that  $0<\rho-\delta<\rho(t,K_{t})<\rho+\delta$ when $t\geq T^{*}$ , then using theorem \ref{thm:3.2} again, for large $T$ 
	\begin{align*}
		\mathcal{P}_{T}\left(f^{2}\right)(\eta)-(\mathcal{P}_{T}\left(f\right))^{2}(\eta) \leq& 2\int_{0}^{T}e^{-\int_{0}^{t}\rho(s,K_{s}) ds}dt\cdot \mathcal{P}_{T}\left(\Gamma(f,f)\right)(\eta)\\
		\leq&2\int_{0}^{T^{*}}e^{-\int_{0}^{t}\rho(s,K_{s}) ds}dt\cdot \mathcal{P}_{T}\left(\Gamma(f,f)\right)(\eta)\\
		+&2\int_{T^{*}}^{T}e^{-\int_{0}^{T^{*}}\rho(s,K_{s}) ds}e^{-\frac{\rho}{2}(t-T^{*})}dt\cdot \mathcal{P}_{T}\left(\Gamma(f,f)\right)(\eta)\\
		=&2\int_{0}^{T^{*}}e^{-\int_{0}^{t}\rho(s,K_{s}) ds}dt\cdot \mathcal{P}_{T}\left(\Gamma(f,f)\right)(\eta)\\
		+&2(\frac{2}{\rho}-\frac{2}{\rho}e^{\frac{-\rho(T-T^{*})}{2}})e^{-\int_{0}^{T^{*}}\rho(s,K_{s}) ds}\cdot \mathcal{P}_{T}\left(\Gamma(f,f)\right)(\eta).
	\end{align*}
	Let $T\rightarrow \infty$ firstly under the ergodic assumption, we can get
	\begin{align*}
		\mu\left(f^{2}\right)-(\mu\left(f\right))^{2} \leq&2(\int_{0}^{T^{*}}e^{-\int_{0}^{t}\rho(s,K_{s}) ds}dt+\frac{2}{\rho}e^{-\int_{0}^{T^{*}}\rho(s,K_{s}) ds})\cdot \mu\left(\Gamma(f,f)\right).
	\end{align*}
	Since $T^{*}$ is large enough, we get finally
	\begin{align*}
		\mu\left(f^{2}\right)-(\mu\left(f\right))^{2} \leq&\frac{2}{\rho} \mu\left(\Gamma(f,f)\right).
	\end{align*}
	This completes the theorem.
\end{proof}
Particularly, let two-points function to be $f(\omega)=f(\omega_{x_{1}},\omega_{x_{2}})=\omega_{x_{1}}+\omega_{x_{2}}$, we can get the estimate of correlation functions as follows:
\begin{corollary}
	Consider a class of two-points function $f(\omega)=\omega_{x_{1}}+\omega_{x_{2}}$, there exists a relationship between system parameter $K$ and time parameter $t$, which satisfies 
	$$K_{t}\rightarrow K^{*}= 0, \ \text{and} \ \rho(t,K_{t})\rightarrow \rho$$
	as $t\rightarrow \infty$. Then there exists $T^{*}$ large enough, such that for $T\geq T^{*}$ , the following estimate of correlation function holds
	\begin{align*}
		2Cov_{\mathcal{P}_{T}}\left(\omega_{x_{1}};\omega_{x_{2}}\right)(\eta)&\leq Var_{\mathcal{P}_{T}}\left(\omega_{x_{1}}+\omega_{x_{2}}\right)(\eta)\\ &\leq2\int_{0}^{T^{*}}e^{-\int_{0}^{t}\rho(s,K_{s}) ds}dt\cdot \mathcal{P}_{T}\left(\Gamma(f,f)\right)(\eta)\\
		+&2(\frac{2}{\rho}-\frac{2}{\rho}e^{\frac{-\rho(T-T^{*})}{2}})e^{-\int_{0}^{T^{*}}\rho(s,K_{s}) ds}\cdot \mathcal{P}_{T}\left(\Gamma(f,f)\right)(\eta),
	\end{align*} 
	where $\eta\in E$ is the initial data. Finally, under the conditions, including coupling relationship between model parameter $K$ and time parameter $t$, in Theorem \ref{thm:5.12}. we obtain the estimate of correlation function as the system time  $T$ goes to $ +\infty$
	\begin{align*}
		2Cov_{\mu}\left(\omega_{x_{1}};\omega_{x_{2}}\right)\leq Var_{\mu}\left(\omega_{x_{1}}+\omega_{x_{2}}\right)\leq\frac{2}{\rho} \mu\left(\Gamma(f,f)\right).
	\end{align*} 
	\begin{proof}
		Noticed that for any probability measure $\mu$ and random variables $X,Y$
		\begin{align*}
			2Cov_{\mu}\left(X;Y\right)= Var_{\mu}\left(X+Y\right)-Var_{\mu}\left(X\right)-Var_{\mu}\left(Y\right)\leq Var_{\mu}\left(X+Y\right).
		\end{align*} 
		Taking the two-points function as $f(\omega)=\omega_{x_{1}}+\omega_{x_{2}}$, according to Theorem \ref{thm:5.12}, we can obtain directly
		\begin{align*}
			2Cov_{\mathcal{P}_{T}}\left(\omega_{x_{1}};\omega_{x_{2}}\right)(\eta)&\leq Var_{\mathcal{P}_{T}}\left(\omega_{x_{1}}+\omega_{x_{2}}\right)(\eta)\\ &\leq2\int_{0}^{T^{*}}e^{-\int_{0}^{t}\rho(s,K_{s}) ds}dt\cdot \mathcal{P}_{T}\left(\Gamma(f,f)\right)(\eta)\\
			+&2(\frac{2}{\rho}-\frac{2}{\rho}e^{\frac{-\rho(T-T^{*})}{2}})e^{-\int_{0}^{T^{*}}\rho(s,K_{s}) ds}\cdot \mathcal{P}_{T}\left(\Gamma(f,f)\right)(\eta).
		\end{align*} 
		The remainder of argument is similar to what we have done in Theorem \ref{thm:5.12},
		which completes the proof of corollary.
	\end{proof}	
\end{corollary}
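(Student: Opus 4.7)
The plan is to derive both displayed inequalities by combining an elementary variance identity with the local Poincar\'e inequality of Theorem~\ref{thm:5.12} applied to the particular two-points function $f(\omega) = \omega(x_1) + \omega(x_2)$.

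First I would establish the lower (covariance--to--variance) inequality in full generality: for any probability measure $\nu$ and any pair of square-integrable random variables $X$ and $Y$, the polarization identity
\[
2\,\mathrm{Cov}_\nu(X;Y) \;=\; \mathrm{Var}_\nu(X+Y) - \mathrm{Var}_\nu(X) - \mathrm{Var}_\nu(Y)
\]
together with nonnegativity of $\mathrm{Var}_\nu(X)$ and $\mathrm{Var}_\nu(Y)$ immediately yields $2\,\mathrm{Cov}_\nu(X;Y) \le \mathrm{Var}_\nu(X+Y)$. Taking $X=\omega_{x_1}$ and $Y=\omega_{x_2}$, and letting $\nu$ be first $\mathcal{P}_T(\cdot)(\eta)$ and then the limit measure $\mu$, supplies the leftmost bound in both asserted estimates.

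Second, I would observe that $f(\omega)=\omega(x_1)+\omega(x_2)$ depends on $\omega$ only through its values at the coordinates $x_1$ and $x_2$, so $f$ falls squarely within the class of two-points functions treated throughout Sections~\ref{sec:3}--\ref{sec:4}. The assumed coupling $K_t \to 0$ with $\rho(t,K_t)\to \rho$ is precisely the hypothesis of Theorem~\ref{thm:5.12}. Applied to this $f$, that theorem gives
\[
\mathrm{Var}_{\mathcal{P}_T}(\omega_{x_1}+\omega_{x_2})(\eta) \;=\; \mathcal{P}_T(f^2)(\eta)-(\mathcal{P}_T f)^2(\eta),
\]
bounded by the quantity on the right of the claimed finite-$T$ estimate. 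Passing to the limit $T\to\infty$ under the ergodic assumption and invoking~\eqref{eq:4.12.1} specialized to this $f$ reproduces the asserted bound for the limiting measure~$\mu$.

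In short, I view the corollary as a direct consequence of Theorem~\ref{thm:5.12} together with the one-line variance identity above, and I do not foresee any serious obstacle. The only point worth recording is that the specific $f$ at hand qualifies as a two-points function in the sense of the earlier sections, which is transparent from its definition; no further estimation of $\Gamma(f,f)$ or of the matrices $M,N$ is needed beyond what Theorem~\ref{thm:5.12} already provides.
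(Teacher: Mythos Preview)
Your proposal is correct and follows essentially the same approach as the paper: the paper also uses the one-line identity $2\,\mathrm{Cov}_\nu(X;Y)=\mathrm{Var}_\nu(X+Y)-\mathrm{Var}_\nu(X)-\mathrm{Var}_\nu(Y)\le \mathrm{Var}_\nu(X+Y)$, applies Theorem~\ref{thm:5.12} to $f(\omega)=\omega_{x_1}+\omega_{x_2}$, and then refers back to the argument in Theorem~\ref{thm:5.12} for the limiting step.
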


\section{Acknowledgement}
This work was supported by National Natural Science Foundation of China (Grant No.12288201).

\section{Appendix}

\begin{appendix}
	\section{Proof of Lemma \ref{lem:A1}}
	\begin{lemma}\label{lem:A1}
		Consider function $f(x)$ which is decreasing on $[a,+\infty)$ for some constant $a$ and 
		\begin{align*}
			\int_{a}^{+\infty} f(x)dx,
		\end{align*}
		is convergent.
		Then
		$$
		\lim_{x\rightarrow +\infty}xf(x)=0.
		$$
	\end{lemma}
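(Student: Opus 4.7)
The strategy is a standard monotone-to-integral comparison: bound $xf(x)$ by a tail integral that vanishes thanks to convergence. First I would establish that $f$ is nonnegative on a tail of $[a,+\infty)$ and that $f(x)\to 0$ as $x\to\infty$. Since $f$ is decreasing and monotone, it admits a limit $L\in[-\infty,+\infty)$ at infinity; if $L\neq 0$ then $|f(x)|$ is bounded below by a positive constant for large $x$, contradicting the convergence of $\int_a^{+\infty} f(x)\,dx$. Hence $L=0$, and consequently $f(x)\geq 0$ for all $x\geq a$ (otherwise $f$, being decreasing, could not increase back up to $0$).

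Next, for $x\geq 2a$, I would exploit monotonicity on the interval $[x/2,x]$: for every $t\in[x/2,x]$ one has $f(x)\leq f(t)$, so integrating over $t$ gives
\begin{equation*}
\frac{x}{2}\,f(x) \;=\; \int_{x/2}^{x} f(x)\,dt \;\leq\; \int_{x/2}^{x} f(t)\,dt.
\end{equation*}
The right-hand side is $\int_{a}^{x} f(t)\,dt - \int_{a}^{x/2} f(t)\,dt$, which tends to $0$ as $x\to+\infty$ by the Cauchy criterion applied to the convergent improper integral $\int_{a}^{+\infty} f(t)\,dt$. Combining the squeeze $0\leq \tfrac{x}{2}f(x)\leq \int_{x/2}^{x} f(t)\,dt\to 0$ yields $xf(x)\to 0$.

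There is no real obstacle; the only mild subtlety is justifying $f\geq 0$ eventually, which is needed to allow the one-sided squeeze from below by $0$. This is handled by the monotone-limit argument above. The argument uses nothing beyond monotonicity and the Cauchy criterion for improper integrals, so it should be concise.
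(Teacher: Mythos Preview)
Your proof is correct and follows essentially the same route as the paper: first reduce to the case $f\geq 0$ with $f(x)\to 0$, then bound a multiple of $xf(x)$ by a tail integral via monotonicity and invoke the Cauchy criterion. The only cosmetic difference is that you integrate over the moving window $[x/2,x]$, whereas the paper fixes a lower endpoint $x^*$ (depending on $\varepsilon$), integrates over $[x^*,x]$, and then uses $f(x)\to 0$ to dispose of the leftover term $x^* f(x)$; your version is marginally cleaner since it avoids that extra step.
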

	\begin{proof}
		Without loss of generality, let $a\geq 0$. The result will be proved by following argument:\\
		\smallskip\textbf{case 1. $f(x)<0$ on $[a,+\infty)$:} Because $f(x)$ is decreasing,  then $f(x)\leq f(a)<0$, which is contrary to the convergence of
		\begin{align*}
			\int_{a}^{+\infty} f(x)dx.
		\end{align*}
		\smallskip\textbf{case 2. $f(x)\geq0$ on $[x_{0},+\infty)$ for some $x_{0}\geq a$ but $
			\lim_{x\rightarrow +\infty}f(x)\neq0
			$:} Because  
		\begin{align*}
			\int_{a}^{+\infty} f(x)dx,
		\end{align*}
		is convergent, so is
		\begin{align*}
			\int_{x_{0}}^{+\infty} f(x)dx=  \int_{a}^{+\infty} f(x)dx-\int_{a}^{x_{0}} f(x)dx.
		\end{align*}
		Besides, we claim:
		for $\forall \ \varepsilon>0$, $\exists\  x^{*}\geq x_{0}$ large enough such that for any $x\geq x^{*}$
		$$
		f(x)\leq \varepsilon.
		$$
		If not, there exists $\varepsilon_{0}>0$, such that $\forall\  x^{*}\geq x_{0}$, $\exists\  \hat{x}\geq x^{*}$ and $f(\hat{x})\geq \varepsilon$, 
		because $f(x)$ is decreasing, so $f(x)\geq \varepsilon_{0}$ on $[x_{0},)$, then
		\begin{align*}
			\int_{a}^{+\infty} f(x)dx\geq \int_{x_{0}}^{+\infty} f(x)dx\geq \int_{x_{0}}^{\hat{x}} f(x)dx\geq f(\hat{x})(\hat{x}-x_{0})\geq \varepsilon_{0}(\hat{x}-x_{0})\geq \varepsilon_{0}(x^{*}-x_{0}).
		\end{align*}
		Owing to the arbitrary of $x^{*}$, then
		\begin{align*}
			\int_{a}^{+\infty} f(x)dx=+\infty.
		\end{align*}
		This leads to a contradiction.\\
		\smallskip\textbf{case 3. $f(x)\geq0$ on $[x_{0},+\infty)$ for some $x_{0}\geq a$ and $
			\lim_{x\rightarrow +\infty}f(x)=0
			$:}
		By Cauthy convergent theorem, for $\forall \ \varepsilon>0$, $\exists\  x^{*}\geq x_{0}$ large enough such that $f(x^{*})>0$ and for any $x\geq x^{*}$,
		\begin{align*}
			\int_{x^{*}}^{x} f(t)dt\leq \varepsilon,
		\end{align*}
		According to the fact that $f(x)$ is decreasing, then 
		\begin{align*}
			f(x)(x-x^{*})\leq \int_{x^{*}}^{x} f(t)dt\leq \varepsilon,
		\end{align*}
		that is to say
		\begin{align*}
			x^{*}f(x)\leq xf(x)\leq \varepsilon +x^{*}f(x),
		\end{align*}
		Let $x$ goes to $+\infty$, we obtain
		\begin{align*}
			0\leq \lim_{x\rightarrow +\infty}xf(x)\leq \varepsilon,
		\end{align*}
		Because $\varepsilon$ is arbitrary, then
		\begin{align*}
			\lim_{x\rightarrow +\infty}xf(x)=0,
		\end{align*}
		which completes the proof of lemma.
	\end{proof}

	\section{The disaster emerges if using the $\Gamma$ calculus directly}
	\begin{observation}\label{obs:1}
		If we use he $\Gamma$ calculus directly for $F=\mathcal{P}_{T-t}f$, the dimension of matrix that  we need to estimate the eigenvalues can be arbitrarily large.
	\end{observation}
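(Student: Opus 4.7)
The plan is to track how the support of $F(t,\eta) = \mathcal{P}_{T-t}f(\eta)$ in the variable $\eta$ grows as a function of $T-t$. Starting from a two-point function $f = f(\eta(x_{1}),\eta(x_{2}))$, I first unroll the Markov chain~\eqref{eq:1.3}: since one step updates the spin at $x_{i}$ as the sign of $K\tilde{\Delta}\phi(x_{i},n)+(2K-\gamma+1)\phi(x_{i},n)+\xi(x_{i},n)$, and $\tilde{\Delta}$ couples $x_{i}$ to its nearest neighbors, after $n$ steps the transition probability $p(n,\eta,\cdot)$ evaluated at the coordinates $x_{1},x_{2}$ depends nontrivially on the spins of $\eta$ in the expanding window $W_{n} = [x_{1}-n,x_{1}+n]\cup[x_{2}-n,x_{2}+n]$. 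Consequently, when $K \neq 0$ the function
\begin{align*}
F(n,\eta) = \sum_{\omega\in E_{2}} f(\omega)\, p(n,\eta,\omega)
\end{align*}
depends on $\Theta(n)$ coordinates of $\eta$; this can be made rigorous by exhibiting nonzero Fourier--Walsh coefficients of $F(n,\cdot)$ at sites near the boundary of $W_{n}$, which propagate outward with each application of the generator.

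Next, applying the $\Gamma$ calculus directly to $F$ as a function on the full configuration space, I would write
\begin{align*}
2\Gamma(F,F)(\eta) &= \int_{\omega\in E}(F(\omega)-F(\eta))^{2}\,p(\eta,d\omega),\\
4\Gamma_{2}(F,F)(\eta) &= \int_{\omega\in E}\!\int_{z\in E}\!(F(z)-2F(\omega)+F(\eta))^{2}\,p(\omega,dz)\,p(\eta,d\omega) - 2\,\mathrm{Var}_{p(\eta,\cdot)}(F),
\end{align*}
and express both as quadratic forms in the vector of values $(F(\omega))_{\omega}$ indexed by the configurations contributing to the integrals. Since $F$ depends on every spin in $W_{T-t}$, this vector naturally lives in $\mathbb{R}^{2^{|W_{T-t}|}}$, so the matrices $M$ and $N$ arising in the quadratic-form representations have size $2^{\Theta(T-t)}\times 2^{\Theta(T-t)}$.

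The Bakry--\'Emery condition $\Gamma_{2}(F,F) \geq \rho\,\Gamma(F,F)$ then reduces to the generalized matrix inequality $M \geq \rho N$, whose sharp $\rho$ is the Fiedler-type value of a matrix of unbounded size. The main obstacle, and the entire point of this observation, is that no a priori simplification collapses this eigenvalue problem to a fixed-dimensional one: as $T-t$ grows the linear-algebra problem inflates without bound, and the spectral estimates required become intractable by elementary means. This is precisely why, in Section~\ref{sec:3.2}, $\Gamma$ and $\Gamma_{2}$ are instead expressed as quadratic forms in the fixed four-dimensional vector $\vec{f}$, so that the relevant matrices stay $4\times 4$ regardless of $T-t$.
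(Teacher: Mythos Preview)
Your proposal is correct and follows essentially the same line as the paper: both arguments track the growth of the spatial support of $F(t,\eta)=\mathcal{P}_{T-t}f(\eta)$ via the nearest-neighbor coupling in~\eqref{eq:1.3}, conclude that $F$ depends on $\Theta(T-t)$ coordinates, and hence that expressing $\Gamma(F,F)$ and $\Gamma_{2}(F,F)$ as quadratic forms in the vector $\vec{F}=(F(\omega))_{\omega}$ yields matrices of dimension $2^{\Theta(T-t)}$.

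The main difference in presentation is that the paper goes further and writes out the explicit block structure of the resulting $(2^{N}-1)\times(2^{N}-1)$ matrix $\Gamma_{2}^{*}=\sum_{k}P_{k}-2V^{*}$, then remarks that even in the degenerate case $K=0$ this matrix is neither diagonally dominant nor decomposable into tractable sub-blocks in the style of Cushing et al.\ \cite{David2020document}. Your version stays at the conceptual level (support growth, Fourier--Walsh nondegeneracy, Fiedler-value formulation) and is arguably cleaner as a motivation; the paper's explicit computation, on the other hand, makes concrete why no elementary spectral trick rescues the naive approach. Both reach the same conclusion and both justify the switch in Section~\ref{sec:3.2} to quadratic forms in the fixed four-dimensional vector $\vec{f}$.
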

	\begin{proof}
		Consider the two-points function $f$, as we have talked in Section \ref{sec:3}, 	the main estimate in establishing  the local Poincar\'e  inequality is
		\begin{align}\label{eq:B1}
			\Gamma_{2}(F,F)(\eta )&\geq \rho(t) \Gamma(F,F)(\eta ),
		\end{align}
		where $F(t,\eta)=\mathcal{P}_{T-t}f(\eta )$, and
		\begin{align}\label{eq:B2}
			\Gamma(F,F)(\eta)&=\int_{\omega\in E}(F(\omega )-F(\eta))^{2}p(\eta,d\omega ),\nonumber\\
			\Gamma_{2}(F,F)(\eta)&=\int_{\omega\in E}\int_{z\in E}(F(z)-2F(\omega )+F(\eta))^{2}p(\omega,dz )p(\eta,d\omega )\\
			&-2\{\int_{\omega\in E}F^{2}(\omega )p(\eta,d\omega )-\left(\int_{\omega\in E}F(\omega)p(\eta,d\omega )\right)^{2}\}.\nonumber
		\end{align}
		Because $f$ is a two-points function, $f(\eta )=f(\eta_{x_{1}}, \eta_{x_{2}})$, 	where $\eta_{x_{1}}=\Pi_{x_{1}}\eta$. Then for integer time $t$ and $T$, 
		$$ F(t,\eta)=\int_{\omega} f(\omega)p(T-t,\eta,d\omega)=\int_{\omega_{x_{1}}=\pm1,\omega_{x_{2}}=\pm1} f(\omega_{x_{1}},\omega_{x_{2}})p(T-t,\eta,d\omega)$$
		that is to say, $ F(t,\eta)$ is no longer a two-points function rather than a multi-points function depends on at least $2(T-t+1)$ spatial points. In view of the expression in~\eqref{eq:B2}~, $\Gamma(F,F)(\eta)$ and $\Gamma_{2}(F,F)(\eta)$ depend on at least $2(T-t+3)$ spatial points. Denote $2(T-t+3)$ as $N$, for $z,\omega,\eta \in \{+1,-1\}^{N}$, expressing $\Gamma_{2}(F,F)$ and  $\Gamma(F,F)$ as the quadratic form of vector $\vec{F}$, the elements of $\vec{F}$ are $\{F(z_{1},\cdots,z_{N}),z_{i}=\pm1,i=1,\cdots,N\}$ in a binary sort, similar to what we do in Section \ref{sec:3}. For given initial data $\eta$, the corresponding probability values of $(F(z)-2F(\omega )+F(\eta))^{2}$ terms in~\eqref{eq:B2}~are $a_{i}, \ i=1,\dots 2^{N}\cdot 2^{N}$ and the variance terms in~\eqref{eq:B2}~are $v_{k},\ k=1,\dots 2^{N}$, then $\Gamma_{2}$ can be written as  
		\begin{align*}
			\Gamma_{2}=\vec{F}^{T}(\sum_{k=1}^{2^{N}}I_{k}A_{k}I^{T}_{k}-2V)\vec{F}
		\end{align*}
		For convenience, we omit $\vec{F}$ and denote
		\begin{align*}
			\Gamma_{2}=\sum_{k=1}^{2^{N}}I_{k}A_{k}I^{T}_{k}-2V
		\end{align*}
		Obviously, zero is a eigenvalue of this quadratic form, which corresponds to eigenvector $(1,1,\dots ,1)$. Then we consider the matrix gets rid of the first row and first column
		\begin{align*}
			\Gamma^{*}_{2}=\sum_{k=1}^{2^{N}}P_{k}-2V^{*}
		\end{align*}
		where
		\begin{align*}
			P_{1}=\left(\begin{array}{cccccccccc} a_{2}& 0& 0& 0\dots &0\\
				0  &a_{3}&0& 0\dots&0\\
				0 &0 &a_{4}& 0\dots&0\\
				\vdots & && \ddots  &\vdots\\
				0 &0 &0& 0\dots&a_{2^{N}}\end{array}\right), V^{*}=\left(\begin{array}{cccccccccc}  v_{2}-v^{2}_{2}& v_{2}v_{3}& v_{2}v_{4}& v_{2}v_{5}\dots & v_{2}v_{2^{N}}\\
				v_{2}v_{3}  &v_{3}-v^{2}_{3}&v_{3}v_{4}& v_{3}v_{5}\dots&v_{3}v_{2^{N}}\\
				v_{2}v_{4} &v_{3}v_{4}&v_{4}-v^{2}_{4}& v_{4}v_{5}\dots&v_{4}v_{2^{N}}\\
				\vdots & && \ddots  &\vdots\\
				v_{2}v_{2^{N}} &v_{3}v_{v_{2^{N}}} &v_{4}v_{2^{N}}& v_{5}v_{2^{N}}\dots&v_{2^{N}}-v^{2}_{2^{N}}\end{array}\right).
		\end{align*}
		For $k\geq 2$
		\begin{align*}
			P_{k}=\left(\begin{array}{cccccccccc} a_{2^{N}(k-1)+2}&&&-2a_{2^{N}(k-1)+2} & \\
				& \ddots\\
				& &a_{2^{N}(k-1)+k-1}&\vdots \\
				-2a_{2^{N}(k-1)+2}&\dots & &D^{k}_{22}&\dots &&-2a_{2^{N}\cdot k}\\
				&&&&a_{2^{N}(k-1)+k+1}\\
				&& &\vdots & &\ddots  \\
				&&&-2a_{2^{N}\cdot k}&& &a_{2^{N}\cdot k}\end{array}\right),
		\end{align*}
		where
		$$D^{k}_{22}=4\sum_{i=1,i\not=k}^{2^{N}}a_{2^{N}(k-1)+i} + a_{2^{N}(k-1)+k}$$
		Hence
		\begin{align*}
			\Gamma^{*}_{2}&=\sum_{k=1}^{2^{N}}P_{k}-2V^{*}\\
			&=\left(\begin{array}{cccccccccc} D^{2}_{22}+\sum_{i=1,i\not=2}^{2^{N}}a_{2^{N}(i-1)+2}&&&-2a_{2^{N}+k}-2a_{2^{N}(k-1)+2} & \\
				& \ddots\\
				& &&\vdots \\
				-2a_{2^{N}+k}-2a_{2^{N}(k-1)+2}&\dots & &D^{k}_{22}+\sum_{i=1,i\not=k}^{2^{N}}a_{2^{N}(i-1)+k}&\dots 
				&&&&\\
				&& & \vdots& \ddots  
			\end{array}\right)\\
			&-2\left(\begin{array}{cccccccccc}  v_{2}-v^{2}_{2}& v_{2}v_{3}& v_{2}v_{4}& v_{2}v_{5}\dots & v_{2}v_{2^{N}}\\
				v_{2}v_{3}  &v_{3}-v^{2}_{3}&v_{3}v_{4}& v_{3}v_{5}\dots&v_{3}v_{2^{N}}\\
				v_{2}v_{4} &v_{3}v_{4}&v_{4}-v^{2}_{4}& v_{4}v_{5}\dots&v_{4}v_{2^{N}}\\
				\vdots & && \ddots  &\vdots\\
				v_{2}v_{2^{N}} &v_{3}v_{v_{2^{N}}} &v_{4}v_{2^{N}}& v_{5}v_{2^{N}}\dots&v_{2^{N}}-v^{2}_{2^{N}}\end{array}\right).
		\end{align*}
		Similarly, for $\Gamma(F,F)$, we can get
		\begin{align*}
			\Gamma(F,F)=\vec{F}^{T}\hat{V}\vec{F},\ \text{where}\
			\hat{V}=\left(\begin{array}{cccccccccc} 
				v_{1}\\
				& v_{2}\\
				&&&& \ddots  \\
				&&&&&v_{2^{N}}\end{array}\right).
		\end{align*}
		Thus, if we want to estimate the Bakry-\'Emery curvature $\rho(t)$ in~\eqref{eq:B1}~, the distribution of eigenvalues with respect to  $N$-dimentional matrix $\Gamma^{*}_{2}$ should be investigated. Unfortunately, the properties of $\Gamma^{*}_{2}$  in our model is too awful. Even in the special case $K=0$, despite the diagonal elements of the matrix $\Gamma^{*}_{2}$ are all positive, it is not a diagonally dominant matrix and can not be decomposed into sub-matrices as Cushing et.al do in \cite{David2020document}, because all the elements in $\Gamma^{*}_{2}$ are nonzero.

	\end{proof}
\end{appendix}


\end{document}